\pgfplotsset{compat=1.14}
\newcounter{lem}
\newtheorem{lemma}[lem]{Lemma}
\newtheorem{remark}[lem]{Remark}
\newtheorem{proposition}[lem]{Proposition}
\newtheorem{theorem}[lem]{Theorem}
\newtheorem{definition}[lem]{Definition}
\newtheorem{notation}[lem]{Notation}
\newtheorem{assumption}[lem]{Assumption}
\newtheorem{corollary}[lem]{Corollary}
\DeclarePairedDelimiter\floor{\lfloor}{\rfloor}
\DeclareMathOperator{\E}{\mathbb{E}}%
\newcommand*\indic[1]{\mathbbm{1}_{\{ #1 \}}}
\newcommand*\indica[1]{\mathbbm{1}_{ #1 }}
\newcommand\mydots{\hbox to 0.7em{.\hss.\hss.}}
\def \limhb#1#2#3{\mathrel{\mathop{\kern 0pt#1}\limits_{#2}^{#3}}}
\newcommand{\PM}{\bm{\mathrm{N}}}
\newcommandx{\Flow}[2][1=,2=]{  \varphi^{#1}_{#2}}
\newcommandx{\K}[2][1=,2=]{K^{#1}_{#2}}
\renewcommandx{\H}[2][1=,2=]{H^{#1}_{#2}}
\renewcommandx{\r}[2][1=,2=]{r^{#1}_{#2}}
\newcommandx{\XI}[2][1=,2=]{\xi^{#1}_{#2}}
\newcommandx{\Y}[3][1=,2=,3=]{
	Y^{#1\@ifmtarg{#1}{}{\@ifmtarg{#2}{}{,}}#2}_{#3}}
\newcommand{\EXP}[1]{\exp{\left( #1 \right) }}
\begin{document}
	\date{\today}
\title{Hopf bifurcation in a Mean-Field model of spiking neurons}
\author[1]{Quentin Cormier}
\author[1]{Etienne Tanr\'e}
\author[1]{Romain Veltz}
\affil[1]{Université Côte d’Azur, Inria, France}
\maketitle
\begin{abstract}
	We study a family of non-linear McKean-Vlasov SDEs 
		driven by a Poisson measure, modelling the 
		mean-field asymptotic of a network of
		generalized Integrate-and-Fire neurons.
		We give sufficient conditions  to have periodic solutions through
		a Hopf bifurcation. 
		Our spectral 
		conditions involve the location of the roots of an explicit holomorphic function. 
		The 
		proof relies on two main ingredients. First, we introduce a discrete time Markov 
		Chain 
		modeling the phases of the successive spikes of a neuron. The invariant measure 
		of this 
		Markov Chain is related to the shape of the periodic solutions.  Secondly, we use 
		the 
		Lyapunov-Schmidt method to obtain self-consistent oscillations. We illustrate the 
		result with a toy model for which all the spectral conditions can be analytically 
		checked.
		\\
		\noindent \textbf{Keywords} McKean-Vlasov SDE · Long time behavior · Hopf bifurcation ·  Mean-field 
		interaction · Volterra integral equation · Piecewise deterministic Markov process\\
		\textbf{Mathematics Subject Classification} Primary: 60K35.  Secondary  35B10, 
		35B32, 60H10
\end{abstract}
\section{Introduction}
We consider a mean-field model of spiking neurons. Let $f: \mathbb{R}_+ 
\rightarrow 
\mathbb{R}_+$, $b: \mathbb{R}_+
\rightarrow \mathbb{R}$ and let $\PM(du, dz)$ be a Poisson measure on
$\mathbb{R}^2_+$ with
intensity the
Lebesgue measure $du dz$. Consider the following McKean-Vlasov SDE
\begin{equation}
	X_t = X_0 + \int_0^t{b(X_u) du} + J \int_0^t{\E f(X_u) du}  -
	\int_0^t{\int_{\mathbb{R}_+}{X_{u-}
			\indic{z \leq f(X_{u-})} \PM(du, dz)}}.
	\label{NL-equation}
\end{equation}
Here, $J \geq 0$ is a deterministic constant (it models the strength of the
interactions) and the
initial condition $X_0$ is independent of the Poisson measure.

Informally, the
SDE
\eqref{NL-equation} can be understood in the following sense:
between the jumps, $X_t$ solves the scalar ODE
\(\dot{X}_t=
b(X_t) + J \E f(X_t) \)
and $X_t$ jumps to $0$ at rate $f(X_t)$. We assume that $b(0) \geq 0$ 
and that $X_0 
	\geq 0$, such 
	that the dynamics lives on $\mathbb{R}_+$.
This SDE is non-linear in the sense of McKean-Vlasov, because of the
interaction term $\E f(X_t)$
which
depends on the law of $X_t$. Let $\nu(t, dx) := \mathcal{L}(X_t)$ be the law
of $X_t$. It solves
the following non-linear Fokker-Planck equation, in the sense of measures:
\begin{align}
	\label{eq:NL PDE in the sense of measures}
	& \partial_t \nu(t, dx)  + \partial_x \left[ (b(x) + J r_t) \nu(t, dx) \right] + f(x)
	\nu(t, dx) = r_t \delta_0 (dx) \\
	& \nu(0, dx) = \mathcal{L}(X_0), \quad r_t = \int_{\mathbb{R}_+}{f(x) \nu(t,
		dx)}. \nonumber
\end{align}
Here $\delta_0$ is the Dirac measure in $0$. If furthermore
$\mathcal{L}(X_t)$ has a density for all
$t$, that is $\mathcal{L}(X_t) = \nu(t, x)dx$
then $\nu(t, x)$ solves the following strong form of \eqref{eq:NL PDE in the
	sense of measures}
\begin{align*}
	& \partial_t \nu(t, x)  + \partial_x \left[ (b(x) + J r_t) \nu(t, x) \right] + f(x)
	\nu(t, x) = 0,\\
	& \nu(0, x)dx = \mathcal{L}(X_0), \quad r_t = \int_{\mathbb{R}_+}{f(x) \nu(t,
		x)dx},
\end{align*}
with the boundary condition
\[\forall t > 0, \quad \left( b(0) + J r_t \right)  \nu(t, 0) = r_t. \]
We study the existence of periodic solution of this  non-linear 
Fokker-Planck equation.
We give sufficient
conditions for
the existence of a Hopf bifurcation around a stationary solution of
\eqref{eq:NL PDE in the sense
	of measures}.

\subsubsection*{Associated particle system}
Equations \eqref{NL-equation} and \eqref{eq:NL PDE in the sense of
	measures} appeared (see e.g. \cite{de_masi_hydrodynamic_2015}) as the limit of 
	the 
following networks of neurons. For each $N
\geq 1$, consider \textit{i.i.d.} initial potentials
$(X^{i, N}_0)_{i \in
	\{1,\cdots,N\}}$ with law $\mathcal{L}(X_0)$.
The \textit{càdlàg} process
$(X^{i,
	N}_t)_{i \in \{1,\cdots,N\}} \in \mathbb{R}^N$ is a PDMP: between the jumps each
$X^{i,
	N}_t$ solves the ODE $\dot{X}^{i,
	N}_t= b(X^{i,
	N}_t)$ and ``spikes'' with rate $f(X^{i,
	N}_t)$. When a spike occurs, say neuron $i$ spikes at (random)
time $\tau$, its
potential is reset to $0$ while the others receive a ``kick'' of size $\frac{J}{N}$:
\[ 
X^{i, N}_{\tau_+} = 0,\quad \text{ and } \quad \forall k \neq i, \quad X^{k,
	N}_{\tau_+} = X^{k,
	N}_{\tau_-} +
\frac{J}{N}.    
\]
This completely defines the particle system. As $N$ goes to infinity, a 
phenomenon of
\textit{propagation of chaos} occurs.
In particular, each neuron, say $(X^{1, N}_t)_{t \geq 0}$, converges in 
law to the solution 
of
\eqref{NL-equation}.
We refer to \cite{fournier_toy_2016} for a proof of such convergence result under 
stronger assumptions.
There is a qualitative
difference between the particle
system and the solution of the limit equation \eqref{NL-equation}: for a
fixed value of $N$, the
particle system is Harris ergodic (see \cite{duarte_model_2014}, where this result is 
proved 
under stronger assumptions on $b$ and $f$) and so it
admits a unique,
globally attractive, invariant measure. Thus, there are no stable oscillations
when the number
of particles is finite.
For the limit equation however, the long time behavior is richer: for
fixed values of
the parameters there can be multiple invariant measures (see \cite{CTV} and
\cite{cormier2020meanfield} for some explicit examples) and, as shown here,
there can exist
periodic
solutions
	(see Figure~\ref{fig:oscillation x10}).

\subsubsection*{Literature}
From a mathematical point of view, this model has been
first introduced by \cite{de_masi_hydrodynamic_2015}, after many
considerations by physicists (see for instance \cite{plesser_noise_2000},
\cite{GerstnetNeuronalDynamic} and \cite{MR2795699} and references
therein). 
In \cite{fournier_toy_2016}, the existence of solution of  \eqref{NL-equation}, 
path-wise uniqueness and convergence of the particle system are addressed. The 
long time behavior 
of the solution to \eqref{NL-equation} 
is
studied in \cite{CTV} in the case of weak  interactions: $b$ and $f$
being fixed, the authors prove that there exists a constant $\bar{J}$
(depending on $b$ and $f$) such that for all $J < \bar{J}$,
\eqref{NL-equation} admits a unique globally attractive invariant measure.
Finally in \cite{cormier2020meanfield}, the local stability of
an invariant measure is studied with no further assumptions on the size of
the interactions $J$. It is proved that the stability of an invariant measure is
given by the location of the roots of some holomorphic function. In 
\cite{lcherbach2020metastability}, the authors study a ``metastable'' behavior of the 
particle system. They give examples of drifts $b$ and rate functions $f$ where the 
particle 
system follows the long time behavior of the mean-field model for an exponential 
large time, before 
finally converging to its (unique) invariant probability measure.

The model studied in the current paper belongs to the class of 
generalized integrate-and-fire
neurons, whose most celebrated example is the ``fixed threshold'' model
(see for instance \cite{MR2853216}, \cite{DIRT1} and the references therein). 
Many of the
techniques developed here also apply to this variant.
However, it would require additional work to 
	overcome the specific difficulties due to the fixed threshold setting. In particular, 
	there are no simple explicit expressions 
	of the kernels introduced in the current paper.

In \cite{drogoul_hopf_2017}, numerical evidences are given for the existence
of a Hopf bifurcation in a close setting: the dynamics
between the jumps is (as in~\cite{de_masi_hydrodynamic_2015}) given by
\[ \dot{X}_t = -(X_t - \E X_t) + J \E f(X_t). \]
In particular the potentials of each neuron are attracted to their common mean. This
models ``electrical synapses'', while $J \E f(X_t)$ models the chemical
synapses. Oscillations with both electrical and chemical synapses is also
studied in a different model in \cite{PhysRevE.100.042412}. In this work, the
mean-field equation is a 2D-ODE and so the analysis of the Hopf bifurcation
is standard. Finally, oscillations with multi-populations such as with both
excitatory and inhibitory neurons have been extensively studied in
neuroscience. For instance in \cite{MR3646433}, it is shown that
multi-populations of mean-field Hawkes processes can oscillate. Again, the
dynamics is reduced to a finite dimension ODE.

It is well-known that the long time behavior of McKean-Vlasov SDEs can be 
significantly different from markovian SDEs. 
In \cite{MR781411} and \cite{MR808166}, the 
author gives simple examples of such non-linear SDEs which oscillate. Again, in 
these 
examples, the dynamics can be reduced to an ordinary differential equation. 
To go beyond ODEs, the framework of Delay differential equation is often used: see 
for 
instance \cite{MR904818} for the study of Hopf bifurcations for such 
equations, based on the Lyapunov-Schmidt method.
In \cite{MR4061402, luon2018periodicity} the authors study periodic solutions of a 
McKean-Vlasov 
SDE using a 
slow-fast approach. Another approach is to use the center manifold theory to 
reduce infinite 
dimensional problem to manifold of finite dimension: we refer to \cite{MR2759609} 
(see also 
\cite{MR3023444} for an application to some McKean-Vlasov SDE).
Finally, in \cite{MR2859263} an abstract framework is 
presented to 
study Hopf bifurcations for some classes of regular PDEs. Even though our proof is 
not 
based on the 
PDE \eqref{eq:NL PDE in the sense of measures} (but on the Volterra integral 
equation 
described below), we follow the methodology of \cite{MR2859263} to obtain our 
main result.
	\subsubsection*{Regularity of the drift and of the jump function.}
	We make the following regularity assumptions on $b$ and $f$.
	\begin{assumption}
		\label{as:linear drift}
		The drift $b: \mathbb{R}_+ \rightarrow \mathbb{R}$ is $\mathcal{C}^2$, with 
		$b(0) \geq 0$ and
		$\sup_{x \geq 0} |b'(x)| + |b''(x)| < \infty.$
	\end{assumption}
	\begin{assumption}
		\label{as:hyp on f}
		The function $f: \mathbb{R}_+ \rightarrow \mathbb{R}_+$ is $\mathcal{C}^2$, 
		strictly increasing, 
		with $\sup_{x \geq 1} \frac{|f''(x)|}{f(x)} < \infty$ and there exists a constant 
		$C_f$ 
		such 
		that
		\begin{enumerate}[label=\ref{as:hyp on f}(\alph*), leftmargin=*]
			\item \label{as:f(xy)} for all $x,y \geq 0$, $f(xy) \leq C_f(1 + f(x))(1+f(y))$.
			\item \label{as:technical assumption on f}
			for all $A >  0$, $\sup_{x \geq 0} A f'(x) - f(x) < \infty$.
			\item \label{as:technical b f}for all $x \geq 0$, $|b(x)| \leq C_f(1 + f(x))$.
		\end{enumerate}
	\end{assumption}
	\begin{remark}
		\label{rk:comportement at infty}
		If a non-decreasing function $f$ satisfies Assumption~\ref{as:f(xy)}, there 
		exists 
		another constant $C_f$ such that for all $x, y \geq 0$, $f(x + y) \leq C_f ( 1 + 
		f(x) + f(y))$.
		Moreover, it also implies that $f$ grows at most at a polynomial rate: there 
		exists a constant $p> 0$ such that
		\[ \sup_{x \geq 1} f(x) / x^p < \infty. \]
	\end{remark}
	Note that for instance, for all $p \geq  1$, the function $f: x \mapsto x^p$ 
	satisfies Assumption~\ref{as:hyp on f}.
	More generally, any continuous function such that $f(x) \sim_{x \rightarrow 
	\infty}x^p$ 
	for some $p 
	\geq 0$ satisfies Assumption~\ref{as:f(xy)}.
	
	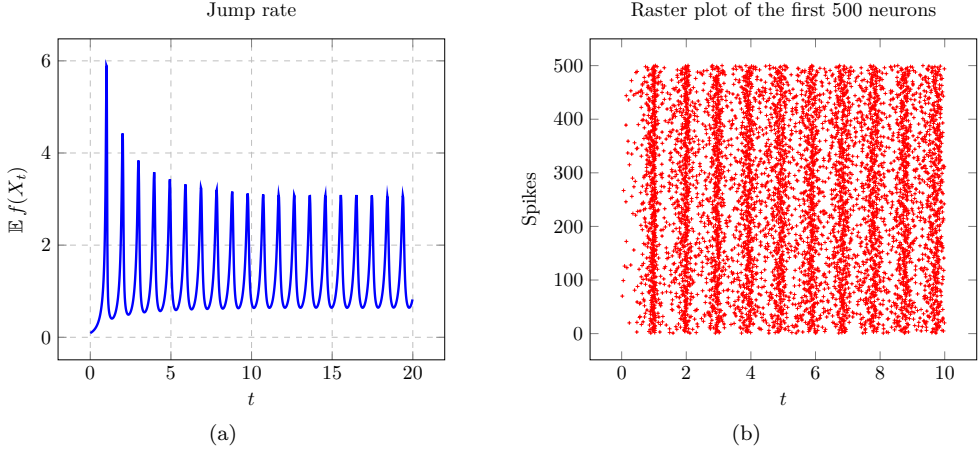
\begin{figure}[h]
		\subfloat[]{
			\begin{tikzpicture}[scale=0.75]
				\begin{axis}[xlabel={$t$}, ylabel={$\E f(X_t)$}, title={Jump rate}, grid=major, 
					grid
					style=dashed]
					\addplot[no marks, blue, very thick] table[ignore chars={(,)},col 
					sep=comma]
					{jumprates.dat};
				\end{axis}
			\end{tikzpicture}
		}
		\qquad
		\subfloat[]{
			\begin{tikzpicture}[scale=0.75]
				\begin{axis}[xlabel={$t$}, ylabel={Spikes}, title={Raster plot of the first 500 
						neurons}]
					\addplot[mark={+}, color={red}, only marks, mark size=1pt] table[ignore 
					chars={(,)},col
					sep=comma]
					{raster.dat};
				\end{axis}
			\end{tikzpicture}
		}
		\caption{
			Consider the following example where for all $x \geq 0$, $f(x) = x^{10}$, $b(x) 
			= 2 - 2x$ and $J 
			= 
			0.8$.	
			Using a Monte-Carlo method, we simulate the particle systems with $N = 8 
			\cdot 10^5$ neurons, 
			starting at $t = 0$ with i.i.d. uniformly distributed random variables on  $[0,1]$. 
			Stable 
			oscillations appear. (a) Empirical mean number of spikes per unit of time. 
			(b) Each red
			cross corresponds to a spike of one of the first 500 neurons (spike raster 
			plot).}
		\label{fig:oscillation x10}
	\end{figure}

\subsubsection*{The Volterra integral equation} 
As in \cite{CTV, cormier2020meanfield}, we study the long time behavior of the 
solution of \eqref{NL-equation} through its ``linearized'' version: given a 
non-negative scalar
function $\boldsymbol{a} \in L^\infty(\mathbb{R}_+; \mathbb{R}_+)$, consider the
non-homogeneous linear SDE:
\begin{equation}
	\label{non-homegeneous-SDE}
	\forall t \geq s, \quad \Y[\boldsymbol{a}][\nu][t, s] = Y_s + 
	\int_s^t{[b(\Y[\boldsymbol{a}][\nu][u, s]) + a_u] du} -  
	\int_s^t{\int_{\mathbb{R}_+}{\Y[\boldsymbol{a}][\nu][u-, s] \indic{z \leq 
	f(\Y[\boldsymbol{a}][\nu][u-, s])} \PM(du, dz)}},
\end{equation}
starting with law $\nu$ at time $s$. That is, equation 
\eqref{non-homegeneous-SDE} is
\eqref{NL-equation} where the interactions $J \E f(X_u)$ have been replaced by the
``external current'' $a_u$. For all $t \geq s$ and for all $\boldsymbol{a} \in
L^\infty(\mathbb{R}_+; \mathbb{R}_+)$, consider $\tau^{\boldsymbol{a}, \nu}_s$ the 
time
of the first jump of $\Y[\boldsymbol{a}][\nu]$ after $s$
\begin{equation} \tau^{\boldsymbol{a}, \nu}_s := \inf \{t \geq s: 
\Y[\boldsymbol{a}][\nu][t,
	s] \neq \Y[\boldsymbol{a}][\nu][t-, s] \}.
	\label{eq:definition de tau first jump time}
\end{equation}
We introduce the spiking rate $\r[\nu][\boldsymbol{a}](t, s)$, the survival function 
$\H[\nu][\boldsymbol{a}](t, s)$ and the density of the first jump 
$\K[\nu][\boldsymbol{a}](t, s)$ to be
\begin{equation}
	\r[\nu][\boldsymbol{a}](t, s) := \E f(\Y[\boldsymbol{a}][\nu][t, s]), \quad
	\H[\nu][\boldsymbol{a}](t, s) := \mathbb{P}(\tau^{\boldsymbol{a}, \nu}_s > t), \quad
	\K[\nu][\boldsymbol{a}](t, s) := - \frac{d}{dt}  \mathbb{P}(\tau^{\boldsymbol{a}, 
	\nu}_s > t).
	\label{def:jump rate, survival, density of survival}
\end{equation}
\begin{notation}
	We detail our conventions and notations.
	\begin{enumerate}
		\item
		We use bold letters \(\boldsymbol{a}\) for time dependent currents 
		and 
			regular greek letters \(\alpha\) for constant currents. 
		\item
		When $\nu = \delta_x$, we write
		\(  
		\r[x][\boldsymbol{a}](t, s) :=  \r[\delta_x][\boldsymbol{a}](t, s).  
		\)
		\item When $\nu = \delta_0$, 
		we  remove the $x$ 
		superscript and write
		\(
		\r[][\boldsymbol{a}](t, s) :=  \r[\delta_0][\boldsymbol{a}](t, s). 
		\)
		\item
		When $\boldsymbol{a}$ is constant and equal to $\alpha \geq 0$, it holds that
		$r^\nu_\alpha(t,s) = r^\nu_\alpha(t-s, 0)$ and we simply note $r^\nu_\alpha(t) :=
		r^\nu_\alpha(t, 0)$. 
		\item Finally, we extend the  function \(r^{\nu}_{\boldsymbol{a}}\), 
		for $s > t$ by setting
		\( \forall s > t, \quad r^{\nu}_{\boldsymbol{a}}(t, s) := 0. \) 
	\end{enumerate}
	We use the same conventions for \(H^{\nu}_{\boldsymbol{a}}\) and 
	\(K^{\nu}_{\boldsymbol{a}}\).
\end{notation}
It is known from \cite[Prop.~19]{CTV} (see also \cite[Prop. 6]{cormier2020meanfield} 
for a 
shorter proof) that $r^\nu_{\boldsymbol{a}}$ is the solution of the following Volterra 
integral 
equation
\begin{equation} r^\nu_{\boldsymbol{a}}(t, s) = K^\nu_{\boldsymbol{a}}(t, s) + 
\int_s^t{ K_{\boldsymbol{a}}(t, u)  r^\nu_{\boldsymbol{a}}(u, s) du}.
	\label{eq:the volterra integral equation}
\end{equation}
Moreover, by \cite[Lem.~17]{CTV}, one has
\begin{equation} 1 = H^\nu_{\boldsymbol{a}}(t, s) + \int_s^t{H_{\boldsymbol{a}}(t, u) 
		r^\nu_{\boldsymbol{a}}(u, s) du}.
	\label{eq:the time of the last spike density sum to 1}
\end{equation}
Following \cite{gripenberg_volterra_1990}, given $c_1, c_2: \mathbb{R}^2 \rightarrow
\mathbb{R}$ two measurable functions, it is convenient to use the notation
\[ (c_1 * c_2) (t, s) = \int_s^t{ c_1(t, u) c_2(u, s) du}, \]
such that \eqref{eq:the volterra integral equation} and \eqref{eq:the time of the last 
	spike density sum to 1} simply write
\[   r^\nu_{\boldsymbol{a}} = K^\nu_{\boldsymbol{a}} +K_{\boldsymbol{a}} * 
r^\nu_{\boldsymbol{a}} \quad \text{ and } \quad  1 = H^\nu_{\boldsymbol{a}} + 
H_{\boldsymbol{a}} *  r^\nu_{\boldsymbol{a}}.   \]
\subsubsection*{The invariant measures of \eqref{NL-equation}.}
Let $\alpha > 0$, define
\begin{equation}
	\label{eq: def sigma alpha}
	\sigma_\alpha := \inf \{x \geq 0,~b(x) + \alpha = 0 \},
\end{equation}
and
\begin{equation}
	\label{eq:the invariant measure for alpha} 
	\nu^\infty_\alpha(x) := \frac{\gamma(\alpha)}{b(x) + \alpha} 
	\EXP{-\int_0^x{\frac{f(y)}{b(y) + \alpha} 
			dy}} \indica{[0,\sigma_\alpha)}(x), 
\end{equation}
where $\gamma(\alpha)$ is the normalizing factor, such that
$\int_{\mathbb{R}_+}{\nu^\infty_\alpha(x) dx} = 1$. Note that $\gamma(\alpha)$ is the 
jump rate
under $\nu^\infty_\alpha$:
\begin{equation}
	\label{eq:gamma alpha}
	\gamma(\alpha) = \nu^\infty_\alpha(f). 
\end{equation}
By \cite[Prop. 26]{CTV}, 
$\nu^\infty_\alpha$ is the unique invariant measure of
the linear SDE \eqref{non-homegeneous-SDE} driven by the 
constant ``external current'' $\boldsymbol{a} \equiv \alpha$. 
We define here a central quantity in our work
\begin{equation}
	\label{eq:the function J of alpha}
	J(\alpha) := \frac{\alpha}{\gamma(\alpha) }.
\end{equation}
It is readily seen that $\nu^\infty_\alpha$ is an invariant measure of the non-linear
equation \eqref{NL-equation} with $J = J(\alpha)$. Reciprocally, for a fixed value of 
$J$,
the number of invariant measures of \eqref{NL-equation} is the number of solutions
\( \alpha \geq 0\) to
the scalar equation
\begin{equation} \alpha = J\gamma(\alpha).
	\label{eq:J equals J(alpha)}
\end{equation}
Any such invariant measure is characterized by its corresponding value of $\alpha$.

\subsubsection*{Stability of an invariant measure}
Fix $J \geq 0$ and consider $\alpha > 0$ a solution of \eqref{eq:J equals 
J(alpha)}. So  
	$\nu^\infty_\alpha$ is an invariant probability measure of \eqref{NL-equation}. A 
	sufficient condition 
for $\nu^\infty_\alpha$ to be locally 
stable is given in \cite{cormier2020meanfield}.

First, consider $H_\alpha(t)$, defined by \eqref{def:jump rate, survival, density of 
survival} 
(with $\nu = \delta_0$ and $\boldsymbol{a} \equiv \alpha$).
For $z \in \mathbb{C}$, we denote by $\Re(z)$ and $\Im(z)$ its real and imaginary 
parts.
The Laplace transform of $H_\alpha(t)$ is defined for $z$ with $\Re(z) > 
-f(\sigma_\alpha)$ ($\sigma_\alpha$ is given by \eqref{eq: def sigma alpha}):
\[ \widehat{H}_\alpha(z) := \int_0^\infty{ e^{-zt } H_\alpha(t) dt}. \]
We assumed that $b(0) \geq 0$ (see Assumption~\ref{as:linear drift}) 
and that $\alpha > 
	0$.  
	So 
	$\sigma_\alpha 
	> 0$. Because  $f$ is strictly increasing (see Assumption~\ref{as:hyp on f}), we 
	deduce that 
	$f(\sigma_\alpha) > 0$. Let
\begin{equation}
	\lambda^*_\alpha := - \sup\{ \Re(z)|~ \Re(z) > 
	-f(\sigma_\alpha),~\widehat{H}_\alpha(z) 
	=  0 \}.
	\label{eq:optimal rate of convergence lambda^*}
\end{equation}
By \cite[Lem. 34, 36]{CTV}, it holds that $\lambda^*_\alpha > 0$. The 
constant 
$\lambda^*_\alpha$ is related to the rate of convergence of $(Y^{\alpha, \delta_0}_{t, 
	0})$ to its invariant measure $\nu^\infty_\alpha$. In particular we have
\begin{equation}
	\label{eq:the jump rate converge with alpha}
	\forall  \lambda < \lambda^*_\alpha,\quad \sup_{t \geq 0} |r_\alpha(t) - 
	\gamma(\alpha)|e^{\lambda t} < \infty. 
\end{equation} 
This describes the long time behavior of an isolated neuron subject to a constant 
current $\alpha > 
0$.
\begin{assumption}
	Assume that the deterministic flow is not degenerate at $\sigma_\alpha$:
	\label{hyp:alpha is non degenerate}
	\begin{align}
		& \sigma_\alpha < \infty \quad  \text{ and }  \quad  b'(\sigma_\alpha) < 0  
		\label{hyp: sigma_alpha is 
			finite}\\
		\text{ \textbf{ or } } \quad  \quad   \quad  & \sigma_\alpha = \infty \quad \text{ 
		and }  \quad  \inf_{x 
			\geq 0}{b(x) + \alpha} > 0. \label{hyp:sigma_alpha is not finite}
	\end{align}
\end{assumption}
	Recall that $r^x_\alpha(t)$ is given by \eqref{def:jump rate, survival, density of 
	survival} (with 
	$\boldsymbol{a} \equiv \alpha, \nu = \delta_x$ and $s = 0$). Following 
	\cite{cormier2020meanfield}, 
	define
	\begin{equation}
		\forall t \geq 0, \quad 	\Theta_\alpha(t) := \int_0^\infty{ \frac{d}{dx} 
		r^{x}_\alpha(t) 
			\nu^\infty_\alpha(dx)}.
		\label{eq:formule donnant Theta.}
	\end{equation}
	By \cite[Prop. 19]{cormier2020meanfield}, $x \mapsto r^x_\alpha(t)$ is 
	$\mathcal{C}^1$ and integrable 
	with respect to $\nu^\infty_\alpha$. Moreover, we have 
\begin{theorem}[\cite{cormier2020meanfield}]
	Grant Assumptions~\ref{as:linear drift}, \ref{as:hyp on f} and \ref{hyp:alpha is non 
	degenerate}.
	It holds that for all $\lambda < \lambda^*_\alpha$ one has $t \mapsto e^{\lambda 
	t} 
	\Theta_\alpha(t) \in L^1(\mathbb{R}_+)$, so that $z \mapsto 
	\widehat{\Theta}_\alpha(z)$ is 
	holomorphic on $\Re(z) > \lambda^*_\alpha$.
	Assume that
	\begin{equation}
		\sup \{\Re (z) ~|~ z \in \mathbb{C}, ~\Re(z) > - \lambda^*_\alpha,~  J(\alpha) 
		\widehat{\Theta}_\alpha(z) = 1  \} < 0,
		\label{eq:criteria_stability}
	\end{equation}
	then the invariant measure $\nu^\infty_\alpha$ is locally stable.
\end{theorem}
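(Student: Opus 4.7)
The plan is to reduce the question of local stability to an analysis of a linear Volterra equation governing the perturbation of the jump rate $r_t$, and then convert the spectral condition~\eqref{eq:criteria_stability} into exponential decay via a Paley--Wiener type argument.

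First, I would start from an initial law $\nu_0$ close to $\nu^\infty_\alpha$ and plug the self-consistent interaction $\boldsymbol{a}_t = J r_t$ into the Volterra identity $r^{\nu_0}_{\boldsymbol{a}} = K^{\nu_0}_{\boldsymbol{a}} + K_{\boldsymbol{a}} * r^{\nu_0}_{\boldsymbol{a}}$. Setting $\tilde r_t := r_t - \gamma(\alpha)$ and using that $t \mapsto \gamma(\alpha)$ is the stationary solution corresponding to $(\nu^\infty_\alpha, \alpha)$, a Taylor expansion in $\boldsymbol{a}$ around the constant $\alpha$ and in $\nu$ around $\nu^\infty_\alpha$ should yield an equation of the form
\[
\tilde r_t \;=\; J\,(\Theta_\alpha * \tilde r)(t) \;+\; S_{\nu_0}(t) \;+\; \mathcal{N}(\tilde r)(t),
\]
where $S_{\nu_0}$ is a source term measuring the initial deviation (and decaying at rate $\lambda^*_\alpha$ by~\eqref{eq:the jump rate converge with alpha}), $\Theta_\alpha$ is exactly the convolution kernel defined in~\eqref{eq:formule donnant Theta.}, arising as the Fréchet derivative of the map $\boldsymbol{a} \mapsto r^{\nu^\infty_\alpha}_{\boldsymbol{a}}$ at the constant current $\alpha$, and $\mathcal{N}$ collects the quadratic (and higher) terms.

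Next, I would analyse the purely linear equation $u = J \Theta_\alpha * u + g$. Because $e^{\lambda t} \Theta_\alpha \in L^1(\mathbb{R}_+)$ for every $\lambda < \lambda^*_\alpha$, the Laplace transform $\widehat{\Theta}_\alpha$ is holomorphic on $\{\Re z > -\lambda^*_\alpha\}$, and under~\eqref{eq:criteria_stability} I can pick $\lambda_0 \in (0, \lambda^*_\alpha)$ so that $1 - J(\alpha)\widehat{\Theta}_\alpha(z)$ has no zero in $\{\Re z \geq -\lambda_0\}$. The Gelfand--Paley--Wiener theorem for Volterra equations~\cite{gripenberg_volterra_1990} then produces a resolvent kernel $R_\alpha$ with $e^{\lambda_0 t} R_\alpha \in L^1(\mathbb{R}_+)$, so that $\| e^{\lambda_0 \cdot} u\|_\infty \lesssim \|e^{\lambda_0 \cdot} g\|_\infty$.

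Finally, I would close the argument by a contraction mapping on the exponentially weighted space $E_{\lambda_0} := \{ v : \sup_{t \geq 0}|v_t| e^{\lambda_0 t} < \infty \}$: provided $\nu_0$ is close enough to $\nu^\infty_\alpha$, so that $\|S_{\nu_0}\|_{E_{\lambda_0}}$ is small, the map $v \mapsto J\Theta_\alpha * v + S_{\nu_0} + \mathcal{N}(v)$ is a contraction on a small ball of $E_{\lambda_0}$, thanks to the resolvent bound above and the quadratic nature of $\mathcal{N}$. This yields $|r_t - \gamma(\alpha)| \lesssim e^{-\lambda_0 t}$, from which local stability of $\mathcal{L}(X_t) \to \nu^\infty_\alpha$ follows by pushing $\tilde r$ back through the non-homogeneous linear SDE~\eqref{non-homegeneous-SDE} driven by $\boldsymbol{a}_t = \alpha + J \tilde r_t$. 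The main obstacles are (i) rigorously identifying $\Theta_\alpha$ as the linearization, which requires the $\mathcal{C}^1$ regularity and integrability of $x \mapsto r^x_\alpha$ against $\nu^\infty_\alpha$ recalled before the statement together with a careful integration by parts exploiting the transport structure of $\nu^\infty_\alpha$, and (ii) obtaining quantitative estimates on the nonlinear remainder $\mathcal{N}$ in the weighted norm, which ultimately rely on Assumptions~\ref{as:linear drift}--\ref{as:hyp on f} to control second variations of $K_{\boldsymbol{a}}$ and $r^{\nu}_{\boldsymbol{a}}$ along the flow.
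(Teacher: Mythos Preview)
The paper does not contain a proof of this theorem: it is quoted verbatim from \cite{cormier2020meanfield} and only followed by a reference to that paper for the definition of local stability. So there is no ``paper's own proof'' to compare your proposal against.

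That said, your sketch is a faithful outline of the strategy actually carried out in \cite{cormier2020meanfield}. The key ingredients you identify --- linearizing the closed Volterra equation around the constant current $\alpha$ so that $\Theta_\alpha$ appears as the convolution kernel, invoking a Paley--Wiener/Gelfand argument to convert the spectral condition \eqref{eq:criteria_stability} into an exponentially decaying resolvent, and then closing by a fixed-point argument in an exponentially weighted space --- are exactly the ones used there. The two obstacles you flag (identifying $\Theta_\alpha$ as the Fr\'echet derivative and controlling the quadratic remainder) are also the genuine technical points of that paper; for (i), the present paper gives an independent derivation of the linearization in Proposition~\ref{prop:the differential of G at x = 0}, which you could consult for the mechanism. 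Your proposal is therefore correct in spirit, and essentially matches the approach of the cited reference rather than offering an alternative route.
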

We refer to \cite[Def. 16]{cormier2020meanfield} for definition of local stability, 
in particular for the choice of the distance between two probability measures.

Assume that there exists $\alpha > 0$ such that 
\eqref{eq:criteria_stability} holds, and 
	so 
	$\nu^\infty_\alpha$ is locally stable. 
There are two ``canonical'' ways to break \eqref{eq:criteria_stability} at some
\textit{bifurcation point} $\alpha_0$:
either there exists some $\tau_0 > 0$ 
such that $J(\alpha_0) \widehat{\Theta}_{\alpha_0}(\pm \frac{i}{\tau_0}) = 1$
or $J(\alpha_0) \widehat{\Theta}_{\alpha_0}(0) = 1$. The first case is the 
subject of this paper: we 
give explicit conditions to have a Hopf bifurcation.

In the second case, the following
lemma shows that $J'(\alpha_0) = 0$. So, at least in the non-degenerate case where
$J''(\alpha_0) \neq 0$, the function $\alpha \mapsto J(\alpha)$ is not strictly 
monotonic in
the neighborhoods of $\alpha_0$: this is a fold  bifurcation which 
typically
leads to
\textit{bistability} (or \textit{multistability}, etc.).
\begin{lemma}
	\label{lem:link between the derivative of J and Theta}
	Grant Assumptions~\ref{as:linear drift} and \ref{as:hyp on f}, and 
	consider $\alpha_1 > 0$ 
		such that 
		Assumption~\ref{hyp:alpha is non degenerate} holds in $\alpha_1$.
		Then the function $\alpha \mapsto J(\alpha)$ is $\mathcal{C}^2$ in a 
		neighborhood of $\alpha_1$ with
	\[ J'(\alpha) = \frac{1 - J(\alpha) \widehat{\Theta}_\alpha(0) }{\gamma(\alpha)}.\]
\end{lemma}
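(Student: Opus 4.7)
The strategy is to reduce the lemma to the single identity $\gamma'(\alpha) = \widehat{\Theta}_\alpha(0)$. Once this is established, the formula follows from the quotient rule applied to $J(\alpha) = \alpha/\gamma(\alpha)$: differentiating gives
\[
J'(\alpha) \;=\; \frac{1}{\gamma(\alpha)} - \frac{\alpha\,\gamma'(\alpha)}{\gamma(\alpha)^2} \;=\; \frac{1 - J(\alpha)\,\widehat{\Theta}_\alpha(0)}{\gamma(\alpha)}.
\]
The $\mathcal{C}^2$ regularity of $\alpha \mapsto \gamma(\alpha) = \nu^\infty_\alpha(f)$ in a neighborhood of $\alpha_1$ would be obtained from the explicit density \eqref{eq:the invariant measure for alpha}: writing $\gamma(\alpha)^{-1} = \int_0^{\sigma_\alpha} (b(x)+\alpha)^{-1} \exp(-\int_0^x f(y)/(b(y)+\alpha)\,dy)\,dx$, Assumption~\ref{hyp:alpha is non degenerate} controls the behavior at the endpoint $\sigma_\alpha$ (where, when finite, the $1/(b+\alpha)$ singularity is tamed by $f(\sigma_\alpha) > 0$ inside the exponential), making the integrand smooth enough in $\alpha$ to differentiate twice under the integral.

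For the central identity, I would use a Poisson-equation argument. Set
\[
g_\alpha(x) \;:=\; \int_0^\infty \bigl( r^x_\alpha(t) - \gamma(\alpha) \bigr)\, dt,
\]
which is well-defined and $\mathcal{C}^1$ in $x$ thanks to the exponential convergence of $r^x_\alpha(t)$ to $\gamma(\alpha)$ (an analogue of \eqref{eq:the jump rate converge with alpha} from \cite{cormier2020meanfield}). Denote by $\mathcal{L}_\alpha \varphi(x) = (b(x)+\alpha)\varphi'(x) + f(x)(\varphi(0) - \varphi(x))$ the generator of the isolated PDMP with constant current $\alpha$, whose unique invariant measure is $\nu^\infty_\alpha$. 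Integrating the Kolmogorov equation $\partial_t r^x_\alpha(t) = \mathcal{L}_\alpha r^{\bullet}_\alpha(t)(x)$ on $[0,+\infty)$ against $dt$ yields the Poisson equation $\mathcal{L}_\alpha g_\alpha = \gamma(\alpha) - f$. A Fubini argument, again justified by the exponential decay (applied to $\partial_x r^x_\alpha(t)$), gives $\widehat{\Theta}_\alpha(0) = \int g'_\alpha(x)\,\nu^\infty_\alpha(dx)$.

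To conclude I would differentiate the Poisson equation in $\alpha$. Since only the drift depends on $\alpha$ in $\mathcal{L}_\alpha$, one has $\partial_\alpha \mathcal{L}_\alpha \varphi = \varphi'$, hence
\[
g'_\alpha(x) + \mathcal{L}_\alpha(\partial_\alpha g_\alpha)(x) \;=\; \gamma'(\alpha).
\]
Integrating against $\nu^\infty_\alpha$ and using $\nu^\infty_\alpha(\mathcal{L}_\alpha \psi) = 0$ for any admissible $\psi$ kills the middle term and gives $\gamma'(\alpha) = \int g'_\alpha\,d\nu^\infty_\alpha = \widehat{\Theta}_\alpha(0)$, as desired. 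The real obstacle is the regularity bookkeeping: one must justify that $g_\alpha$ and $\partial_\alpha g_\alpha$ belong to the domain of $\mathcal{L}_\alpha$ in a sense that makes the above manipulations rigorous, and that the dominated convergence / Fubini exchanges all hold uniformly in a neighborhood of $\alpha_1$. This is precisely where Assumption~\ref{hyp:alpha is non degenerate} enters, by ensuring uniform exponential decay and uniform control of the behavior at $\sigma_\alpha$ (finite or infinite).
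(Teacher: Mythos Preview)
Your reduction to $\gamma'(\alpha) = \widehat{\Theta}_\alpha(0)$ matches the paper's first step, but from there the two proofs diverge. The paper stays entirely within the Volterra/renewal framework: it uses $\gamma(\alpha)^{-1} = \widehat{H}_\alpha(0)$, introduces the auxiliary kernels $\Psi_\alpha$ and $\Xi_\alpha = \Psi_\alpha'$ from \cite{cormier2020meanfield}, exploits the convolution identity $\Theta_\alpha = \Xi_\alpha + r_\alpha * \Xi_\alpha$ (together with $\widehat{\Xi}_\alpha(0)=0$) to obtain $\widehat{\Theta}_\alpha(0) = \gamma(\alpha)\,\widehat{\Psi}_\alpha(0)$, and then directly differentiates the explicit integral $\widehat{H}_\alpha(0) = \int_0^\infty H_\alpha(t)\,dt$ in $\alpha$ to land on $-\widehat{\Psi}_\alpha(0)/\gamma(\alpha)$. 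Your route is instead the standard Poisson-equation sensitivity argument for Markov processes: solve $\mathcal{L}_\alpha g_\alpha = \gamma(\alpha) - f$, identify $\widehat{\Theta}_\alpha(0)$ with $\nu^\infty_\alpha(g_\alpha')$, differentiate the Poisson equation in the parameter, and integrate against the invariant measure.

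Both approaches are sound. The paper's computation is more hands-on and recycles estimates on $H_\alpha$, $\Psi_\alpha$, $\Xi_\alpha$ already established in \cite{cormier2020meanfield}, so the regularity bookkeeping is effectively outsourced. Your argument is conceptually cleaner and would transfer to other parameter-dependent Markov processes without a renewal structure; the price, as you correctly flag, is that you must verify that $g_\alpha$ and $\partial_\alpha g_\alpha$ are admissible for the identity $\nu^\infty_\alpha(\mathcal{L}_\alpha\psi)=0$ (including control at $x\uparrow\sigma_\alpha$), which in this PDMP setting ultimately requires the same uniform exponential-decay estimates the paper pulls from \cite{cormier2020meanfield}.
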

\begin{proof}
		First, recall that $J(\alpha) = \frac{\alpha}{\gamma(\alpha)}$. So it suffices to 
		show that $\alpha \mapsto \gamma(\alpha)$ is $\mathcal{C}^2$ and that 
		$\gamma'(\alpha) = 
		\widehat{\Theta}_\alpha(0)$. Note that if $\alpha_1 > 0$ satisfies 
		Assumption~\ref{hyp:alpha is non 
			degenerate}, then there 
		exists a neighborhood of $\alpha_1$ such that for any $\alpha$ in this 
		neighborhood, $\alpha$ also 
		satisfies Assumption~\ref{hyp:alpha is non degenerate}. 
		We will see later by Proposition~\ref{prop:regularity of rho} that $\alpha 
		\mapsto 
		\gamma(\alpha)$ is 
		$\mathcal{C}^2$ in the neighborhood for $\alpha_1$. This can also be checked directly: by \cite[eq. 
		(31)]{CTV}, it 
		holds that 
		$\gamma(\alpha)^{-1} = \widehat{H}_\alpha(0)$ and we can conclude using the 
		explicit expression 
		satisfied by 
		$H_\alpha(t)$ (see \eqref{eq:explicit expression H} below). To end the proof, 
		using again that 
		$\gamma(\alpha)^{-1} = 
		\widehat{H}_\alpha(0)$, we have to deduce that
	\[  \frac{d}{d \alpha} \widehat{H}_\alpha(0) = - \frac{
		\widehat{\Theta}_\alpha(0)}{ \left[ \gamma(\alpha) \right]^2 }. \]
	Following \cite{cormier2020meanfield}, let:
	\begin{equation}
		\Psi_\alpha(t) := - \int_0^{\sigma_\alpha}{ \frac{d}{dx} H^x_\alpha(t)  
		\nu^\infty_\alpha(x) dx}. \label{eq:expression de Psi_alpha}
	\end{equation}
	It holds that (see \cite{cormier2020meanfield})
	\begin{equation}
		\Psi_\alpha(t) = \gamma(\alpha) \int_0^\infty{  H_\alpha(t+u) 
		\frac{f(\varphi^\alpha_{t+u}(0)) - f(\varphi^\alpha_u(0))}{b(\varphi^\alpha_u(0)) + 
		\alpha} du }.
		\label{eq:une formule pour Psi_alpha}
	\end{equation}
	Moreover, let
	\begin{equation}
		\label{eq:definition de Xi alpha}
		\forall t \geq 0,\quad  \Xi_\alpha(t) := \frac{d}{dt} \Psi_\alpha(t).  
	\end{equation}
	We have
	\begin{equation}
		\forall t \geq 0,\quad   \Xi_\alpha(t) = \int_0^{\sigma_\alpha}{ \frac{d}{dx} 
		K^x_\alpha(t)
			\nu^\infty_\alpha(x) dx}. \label{eq:expression de Xi_alpha}
	\end{equation}
	So, using \eqref{eq:the volterra integral equation} (with $\nu = \delta_x$ and 
	$\boldsymbol{a} 
	\equiv \alpha$) we deduce that (see \cite[eq. (43)]{cormier2020meanfield} for 
	more details):
	\begin{equation}
		\label{eq:link Theta Xi and r}
		\Theta_\alpha = \Xi_\alpha + r_\alpha * \Xi_\alpha.
	\end{equation} 
	Note that $\Psi_\alpha(0) = 0$, $\lim_{t \rightarrow \infty} \Psi_\alpha(t) = 0$ and 
	so
	$\widehat{\Xi}_\alpha(0) = 0$. Let
	\[ \xi_\alpha(t) := r_\alpha(t) - \gamma(\alpha). \]
	Using \eqref{eq:the jump rate converge with alpha} (with $\nu = \delta_0$), we 
	deduce that  
	$\xi_\alpha \in L^1(\mathbb{R}_+)$. So \eqref{eq:link Theta Xi and r} yields
	\begin{equation}
		\label{eq:08-20-21}
		 \Theta_\alpha = \Xi_\alpha + \gamma(\alpha) \Psi_\alpha + \xi_\alpha * \Xi_\alpha. 
	\end{equation}
	We deduce that  $\widehat{\Theta}_\alpha(0) = \gamma(\alpha) 
	\widehat{\Psi}_\alpha(0)$.
	Finally, we have
	\begin{align*}
		\frac{d}{d \alpha} \widehat{H}_\alpha(0) &= \int_0^\infty{ \frac{d}{d \alpha} 
		H_\alpha(t) dt} \\
		&= - \int_0^\infty{ H_\alpha(t) \int_0^t{ \frac{f(\varphi^\alpha_t(0)) - 
		f(\varphi^\alpha_\theta(0))}{b(\varphi^\alpha_\theta(0)) + \alpha}  d \theta} dt} \\
		&= - \int_0^\infty{ \int_0^\infty{ H_\alpha(u + \theta)  
		\frac{f(\varphi^\alpha_{u+\theta}(0)) - 
					f(\varphi^\alpha_\theta(0))}{b(\varphi^\alpha_\theta(0)) + \alpha}  } 
					d\theta du} \quad \text{ (using 
			Fubini} \\
		& \qquad \text{ and the change of variables $u = t- \theta$).}\\
		&= - \frac{ \widehat{\Psi}_\alpha(0)}{\gamma(\alpha)}.
	\end{align*}
	This ends the proof.
\end{proof}

The paper is structured as follows: in Section~\ref{sec:main result}, we state the 
spectral 
assumptions and the main result, Theorem~\ref{th:main result hopf}. We give a 
layout of its proof at 
the end of Section~\ref{sec:main result}. In Section~\ref{sec:proof of main result}, 
we give the 
proof of 
Theorem~\ref{th:main result hopf}. Finally, in Section~\ref{sec:toy 
	model}, we give an explicit example of a drift $b$ and a rate function $f$ for 
	which 
such Hopf 
bifurcations occur and the spectral assumptions can be analytically checked.

\section{Assumptions and main result}
\label{sec:main result}
Following \cite{CTV,cormier2020meanfield}, we assume that the law of the initial 
condition belongs to
\[ \mathcal{M}(f^2) := \{ \nu \in \mathcal{P}(\mathbb{R}_+): \quad 
\int_{\mathbb{R}_+}{f^2(x) \nu(dx)} < \infty  \}. \]
For such initial condition, under Assumptions~\ref{as:linear drift} and \ref{as:hyp on 
f}, 
the non-linear SDE \eqref{NL-equation} has a unique path-wise solution (see 
\cite[Th. 
9]{cormier2020meanfield}).
\begin{definition}
	\label{def:def of a periodic solution}
	A family of probability measures $(\nu(t))_{t \in [0,T]}$ is said to be a 
	$T$-periodic 
	solution of \eqref{NL-equation} if
	\begin{enumerate}
		\item $\nu(0) \in \mathcal{M}(f^2)$.
		\item For all $t \in [0,T]$, $\nu(t) = \mathcal{L}(X_t)$ where \((X_t)_{t \in [0,T]}\) 
		is 
		the solution of 
		\eqref{NL-equation} starting from $X_0 \sim \nu(0)$.
		\item It holds that $\nu(T) = \nu(0)$.
	\end{enumerate}
\end{definition}
In this case, we can obviously extend \((\nu(t))\) for \(t\in\mathbb{R}\) by periodicity.
Considering now the solution \((X_t)_{t\geq 0}\) of 
\eqref{NL-equation} defined for 
\(t\geq 0\), it remains true that \(\nu(t) = \mathcal{L}(X_t)\) for any \(t\geq 0\).

We study the existence of periodic solutions $t \mapsto \mathcal{L}(X_t)$ where
$(X_t)$ is the solution of \eqref{NL-equation},  near a non-stable invariant
measure $\nu^\infty_{\alpha_0}$.
We assume that the stability criterion \eqref{eq:criteria_stability} is not
	satisfied for $\alpha_0$:
\begin{assumption}
	\label{ass:i / b0 roots of the characteristic equation}
	Assume that there exists $\alpha_0 > 0$ and $\tau_0 > 0$ such that
	\[ J(\alpha_0)  \widehat{\Theta}_{\alpha_0} (\tfrac{i}{\tau_0}) = 1 \quad \text{ and } 
	\quad \frac{d}{dz} \widehat{\Theta}_{\alpha_0} (\tfrac{i}{\tau_0}) \neq 0 . \]
\end{assumption}
\begin{assumption}[Non-resonance condition]
	\label{ass:nonresonance condition}
	Assume that for all $n \in \mathbb{Z} \backslash \{ -1, 1\} $,
	\[  J(\alpha_0) \widehat{\Theta}_{\alpha_0} (\tfrac{i n}{\tau_0}) \neq 1.  \]
\end{assumption}
\begin{remark}[Local uniqueness of the invariant measure in the neighborhood of
	$\alpha_0$]
	\label{rk:uniqueness of the invariant measure around alpha0}
	Under Assumption~\ref{ass:nonresonance condition}, we have  $J(\alpha_0) 
	\widehat{\Theta}_\alpha(0) \neq 1$ and so, by Lemma \ref{lem:link between the 
	derivative of J and 
		Theta}, it holds that $J'(\alpha_0) \neq 0$. 
		Fix $J$ in the neighborhood of $J(\alpha_0)$.
	Recall that the values of \(\alpha\) such that \(\nu^\infty_\alpha\) is an invariant 
	measure of \eqref{NL-equation} are precisely the solutions of \(J(\alpha) = J\).
	So, in the neighborhood of $\alpha = \alpha_0$, the invariant measure of 
	\eqref{NL-equation} is 
	unique.
\end{remark}
\begin{lemma}
	\label{lem:definition of mu(alpha)}
	Under Assumption~\ref{ass:i / b0 roots of the characteristic equation}, there 
	exists
	$\eta_0, \varrho_0 > 0$ and a function $\mathfrak{Z}_0 \in \mathcal{C}^1((\alpha_0 
	-
	\eta_0,
	\alpha_0 + \eta_0); \mathbb{C})$ with $\mathfrak{Z}_0(\alpha_0) = \frac{i}{\tau_0}$ 
	such
	that for all
	$z \in \mathbb{C}$ with $|z - \frac{i}{\tau_0}| < \varrho_0$ and for all $\alpha > 0$ 
	with
	$|\alpha -
	\alpha_0| < \eta_0$ we have
	\begin{equation}
		\label{eq:link between Theta and Z0}
		J(\alpha) \widehat{\Theta}_{\alpha}(z) = 1 \iff z = \mathfrak{Z}_0(\alpha). 
	\end{equation} 
\end{lemma}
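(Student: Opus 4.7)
The plan is to recognize this lemma as a direct application of the (holomorphic) implicit function theorem applied to the scalar equation $F(\alpha,z):=J(\alpha)\widehat{\Theta}_{\alpha}(z)-1=0$ in a neighborhood of the point $(\alpha_0,i/\tau_0)$. Assumption \ref{ass:i / b0 roots of the characteristic equation} provides exactly the two ingredients one needs: the point $(\alpha_0,i/\tau_0)$ solves $F=0$, and $\partial_z F(\alpha_0,i/\tau_0)=J(\alpha_0)\tfrac{d}{dz}\widehat{\Theta}_{\alpha_0}(i/\tau_0)\neq 0$ (since $J(\alpha_0)>0$). Given suitable joint regularity of $F$, the implicit function theorem yields the desired $\mathcal{C}^1$ curve $\mathfrak{Z}_0$ together with the local uniqueness statement \eqref{eq:link between Theta and Z0}.

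The first step is to collect the regularity facts needed on $F$. The holomorphy of $z\mapsto\widehat{\Theta}_\alpha(z)$ on a half-plane strictly containing $i/\tau_0$ is stated in the theorem recalled just before Assumption~\ref{ass:i / b0 roots of the characteristic equation}; this takes care of smoothness in $z$ and of the non-vanishing $\partial_z F$. For regularity in $\alpha$, Lemma~\ref{lem:link between the derivative of J and Theta} already gives $\alpha\mapsto J(\alpha)$ of class $\mathcal{C}^2$ in a neighborhood of $\alpha_0$ (noting that $\alpha_0$ satisfies Assumption~\ref{hyp:alpha is non degenerate}, by the non-resonance argument of Remark~\ref{rk:uniqueness of the invariant measure around alpha0}, which is available once we observe that assuming otherwise would force $J(\alpha_0)\widehat{\Theta}_{\alpha_0}(0)=1$; more simply, the only part of Assumption~\ref{hyp:alpha is non degenerate} we need is to make the kernels well-defined, which is ensured by the framework). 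For the joint $\mathcal{C}^1$ dependence $(\alpha,z)\mapsto\widehat{\Theta}_\alpha(z)$, the plan is to invoke the regularity in $\alpha$ of $\Theta_\alpha$ provided by Proposition~\ref{prop:regularity of rho} (the same result alluded to in the proof of Lemma~\ref{lem:link between the derivative of J and Theta}), together with the exponential decay bound $\sup_{t\geq 0}|\Theta_\alpha(t)|e^{\lambda t}<\infty$ for $\lambda<\lambda^*_\alpha$. Dominated convergence then lets one differentiate under the integral sign defining $\widehat{\Theta}_\alpha(z)$ in both variables on a small neighborhood of $(\alpha_0,i/\tau_0)$, yielding joint $\mathcal{C}^1$ regularity of $F$.

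With $F$ of class $\mathcal{C}^1$, $F(\alpha_0,i/\tau_0)=0$ and $\partial_z F(\alpha_0,i/\tau_0)\neq 0$, the (real) implicit function theorem produces $\eta_0,\varrho_0>0$ and a $\mathcal{C}^1$ function $\mathfrak{Z}_0:(\alpha_0-\eta_0,\alpha_0+\eta_0)\to\mathbb{C}$ with $\mathfrak{Z}_0(\alpha_0)=i/\tau_0$ such that, for $(\alpha,z)$ in the product neighborhood, $F(\alpha,z)=0$ is equivalent to $z=\mathfrak{Z}_0(\alpha)$. This gives exactly \eqref{eq:link between Theta and Z0}.

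The only genuine obstacle is the joint smoothness of $(\alpha,z)\mapsto\widehat{\Theta}_\alpha(z)$, since everything else is either hypothesis or bookkeeping. Once this is secured from Proposition~\ref{prop:regularity of rho} and uniform exponential integrability of $\Theta_\alpha$ (both inherited from \cite{cormier2020meanfield}), the lemma reduces to one line of the implicit function theorem.
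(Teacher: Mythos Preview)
Your proposal is correct and follows exactly the paper's approach: the paper's own proof is the single sentence ``It suffices to apply the implicit function theorem to $(\alpha, z) \mapsto J(\alpha) \widehat{\Theta}_{\alpha}(z) - 1$.'' You have simply supplied the regularity verifications that the paper leaves implicit.
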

\begin{proof} It suffices to apply the implicit function theorem to $(\alpha, z) 
\mapsto
	J(\alpha) \widehat{\Theta}_{\alpha}(z) - 1$.
\end{proof}
\begin{assumption}
	\label{ass:hopf condition}
	Assume that $\alpha \mapsto \mathfrak{Z}_0(\alpha)$ crosses the imaginary part 
	with
	non-vanishing speed, that is
	\[  \Re \mathfrak{Z}_0'( \alpha_0) \neq 0, \quad \text{ where } \quad 
	\mathfrak{Z}_0'(\alpha)
	= \frac{d}{d \alpha} \mathfrak{Z}_0 (\alpha). \]
\end{assumption}
\begin{remark}
	Using \eqref{eq:link between Theta and Z0}, Assumption~\ref{ass:hopf condition} 
	is 
	equivalent to
	\[ \Re \left( \frac{ \frac{\partial }{\partial \alpha }   \left. \left(   J(\alpha) 
		\widehat{\Theta}_{\alpha} \right)
		\right|_{\alpha = \alpha_0}  (\tfrac{i}{\tau_0}) }{ J(\alpha_0) \frac{\partial}{\partial 
		z} 
		\widehat{\Theta}_{\alpha_0}(\frac{i}{\tau_0})} \right) \neq 0. \]
\end{remark}
Our main result is the following.
\begin{theorem}
	\label{th:main result hopf}
	Consider $b, f$ satisfying Assumptions~\ref{as:linear drift} and 
	\ref{as:hyp on f}.
		Let $\alpha_0, \tau_0 > 0$ be such that Assumptions~\ref{hyp:alpha is non 
		degenerate}, \ref{ass:i / b0 roots 
			of 
			the 
			characteristic equation}, \ref{ass:nonresonance
			condition} and \ref{ass:hopf condition} hold.
	Then, there exists a family of  $2 \pi \tau_v$-periodic
	solutions of \eqref{NL-equation}, parametrized by $v \in (-v_0, v_0)$, for some 
	$v_0 > 0$.
	More precisely, there exists a continuous curve $\{ (\nu_v(\cdot), \alpha_v,  
	\tau_v),~ v \in (-v_0, v_0) \}$ such that
	\begin{enumerate}
		\item For all $v \in (-v_0, v_0)$, $(\nu_v(t))_{t \in \mathbb{R}}$ is a $2 \pi 
		\tau_v$-periodic solution of \eqref{NL-equation} with $J = J(\alpha_v)$.
		\item The curve passes through $(\nu^\infty_{\alpha_0}, \alpha_0, \tau_0)$ at $v 
		= 0$. In particular we have for all $t \in \mathbb{R}$, $\nu_0(t) \equiv 
		\nu^\infty_{\alpha_0}$.
		\item The ``periodic current'' $\boldsymbol{a}_v$, defined 
		by
		\begin{equation}
			\label{eq:periodic current from nu_v}
			t \mapsto a_v(t) := J(\alpha_v) \int_{\mathbb{R}_+}{f(x) \nu_v(t, dx)},
		\end{equation}
		is continuous and $2 \pi \tau_v$-periodic. Moreover, its mean over one period
		is  $\alpha_v$:
		\[  \frac{1}{2 \pi \tau_v} \int_0^{2 \pi \tau_v}{ a_v(u) du } =  \alpha_v. \]
		\item Furthermore, $v$ is the amplitude of the first harmonic of 
		$\boldsymbol{a_v}$,
		that is 
		for all $v \in (-v_0, 
		v_0)$
		\[ \frac{1}{2 \pi \tau_v} \int_0^{2 \pi \tau_v}{ a_v(u)
			\cos (u / \tau_v) du} = v \quad \text{ and } \quad \frac{1}{2 \pi \tau_v} 
			\int_0^{2 \pi 
			\tau_v}{
			a_v(u)
			\sin (u / \tau_v) du} = 0. \]
	\end{enumerate}
	Every other periodic solution in a neighborhood of $\nu^\infty_{\alpha_0}$ is 
	obtained
	from a phase-shift of one such $\nu_v$. More precisely, there exists small enough
	constants $\epsilon_0, \epsilon_1 > 0$ (only depending on $b, f, \alpha_0$ and 
	$\tau_0$)
	such that if $(\nu(t))_{t \in \mathbb{R}}$
	is any $2 \pi \tau$-periodic solution of \eqref{NL-equation} for some value of $J > 
	0$
	such that
	\[  |\tau - \tau_0| < \epsilon_0 \quad \text{ and } \quad \sup_{t \in [0, 2 \pi \tau]} 
	\left|  J
	\int_{\mathbb{R}_+}{f(x) \nu(t, dx)} - \alpha_0 \right| < \epsilon_1,  \]
	then there exists a shift $\theta \in [0, 2 \pi \tau)$ and $v \in (-v_0, v_0)$ such 
	that $J =
	J(\alpha_v)$ and
	\[  \forall t \in \mathbb{R}, \quad  \nu(t) = \nu_v( t + \theta). \]
\end{theorem}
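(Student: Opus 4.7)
The plan is to recast the existence of periodic solutions as a self-consistency problem on the space of periodic external currents, and then perform a Lyapunov-Schmidt reduction around the bifurcation point $(\alpha_0, \tau_0)$, following the abstract scheme of \cite{MR2859263}. The first step is to observe that finding a $T$-periodic solution $(\nu(t))$ of \eqref{NL-equation} for interaction strength $J$ is equivalent to finding a $T$-periodic nonnegative function $\boldsymbol{a}$ such that $a(t) = J \cdot \mathbb{E} f(Y_t)$, where $(Y_t)$ is the unique periodic solution of the non-homogeneous linear SDE \eqref{non-homegeneous-SDE} driven by $\boldsymbol{a}$. Producing this unique periodic driven solution, and its periodic jump rate $r^\infty_{\boldsymbol{a}}$, requires a preliminary analysis of the Volterra equation \eqref{eq:the volterra integral equation} for periodic kernels; this is presumably where the discrete-time Markov chain of successive spike phases mentioned in the abstract enters, providing exponential ergodicity to the flow $s \mapsto r^{\nu}_{\boldsymbol{a}}(\cdot, s)$ and hence existence and uniqueness of $r^\infty_{\boldsymbol{a}}$.

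With this in hand, I would rescale time by $s = t/\tau$ so that the unknown is $2\pi$-periodic irrespective of $\tau$, and define a $\mathcal{C}^k$ map
\[ F(\boldsymbol{a}, \alpha, \tau)(s) := a(s) - J(\alpha) \cdot \mathbb{E} f\bigl(Y^{\infty}_{\boldsymbol{a}, \tau}(s)\bigr) \]
on a suitable Banach space $\mathcal{X}$ of $2\pi$-periodic functions, where the trivial branch is $\boldsymbol{a} \equiv \alpha$ (corresponding to the stationary measure $\nu^\infty_\alpha$ because $J(\alpha)\gamma(\alpha) = \alpha$). The crucial computation is the linearization $L := D_{\boldsymbol{a}} F(\alpha_0, \alpha_0, \tau_0)$: by definition of $\Theta_\alpha$ in \eqref{eq:formule donnant Theta.}, $L$ should act diagonally on the Fourier basis $e^{ins}$ by the multiplier $1 - J(\alpha_0) \widehat{\Theta}_{\alpha_0}(in/\tau_0)$. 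Assumption~\ref{ass:i / b0 roots of the characteristic equation} then ensures that $\ker L$ is the real $2$-dimensional space spanned by $\cos s, \sin s$, while Assumption~\ref{ass:nonresonance condition} guarantees that $L$ is Fredholm of index zero with invertible restriction to the complement.

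Next comes the Lyapunov-Schmidt reduction. Writing $\boldsymbol{a} = \alpha + v \cos s + w$ with $w$ in a chosen complement of $\ker L$ (for instance, functions whose first Fourier harmonic vanishes), I would use the implicit function theorem on the projection of $F$ onto the range of $L$ to solve $w = w(v, \alpha, \tau)$ smoothly, with $w(0, \alpha_0, \tau_0) = 0$. Substituting back yields a two-dimensional bifurcation equation on $\ker L$, namely the cosine and sine Fourier coefficients of $F(\alpha + v\cos s + w, \alpha, \tau)$. The phase-shift invariance $\nu(\cdot) \mapsto \nu(\cdot + \theta)$ of \eqref{NL-equation} allows a canonical normalization (the sine coefficient vanishes by a suitable time-shift, which is precisely item~4 of the theorem), reducing the problem to a scalar equation in $(\alpha, \tau)$. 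Expanding this equation to first order in $v$ gives a Jacobian proportional to $\bigl(\tfrac{\partial}{\partial \alpha}(J(\alpha)\widehat{\Theta}_\alpha(i/\tau_0)), \tfrac{\partial}{\partial \tau}(J(\alpha_0)\widehat{\Theta}_{\alpha_0}(i/\tau))\bigr)$, and Assumption~\ref{ass:hopf condition} (non-vanishing of $\Re \mathfrak{Z}_0'(\alpha_0)$) is exactly the transversality needed to invoke the implicit function theorem and obtain the branch $v \mapsto (\alpha_v, \tau_v)$. The local uniqueness statement at the end of the theorem follows from the uniqueness in Lyapunov-Schmidt once the phase is fixed by the normalization.

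The main obstacle will be the analytic groundwork: establishing $\mathcal{C}^k$-smoothness of the map $\boldsymbol{a} \mapsto r^\infty_{\boldsymbol{a}}$ in an appropriate Banach space of periodic currents, identifying its Fr\'echet derivative at $\boldsymbol{a} \equiv \alpha_0$ with the convolution operator whose symbol is $\widehat{\Theta}_{\alpha_0}$, and controlling the inverse of $L$ on the complement uniformly in $(\alpha, \tau)$ near $(\alpha_0, \tau_0)$. This requires considerably more than the linear theory recalled in the excerpt; in particular the construction of $r^\infty_{\boldsymbol{a}}$ via the Markov chain of spike phases and its differentiability with respect to $\boldsymbol{a}$ are the technical backbone of the whole argument.
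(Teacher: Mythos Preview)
Your proposal is correct and follows essentially the same route as the paper: the reformulation as a fixed-point equation $\boldsymbol{a} = J\rho_{\boldsymbol{a}}$ on periodic currents, the construction of $\rho_{\boldsymbol{a}}$ via the Markov chain of spike phases, the time-rescaling to $2\pi$-periodic functions, the identification of the linearization with the Fourier multiplier $1 - J(\alpha_0)\widehat{\Theta}_{\alpha_0}(in/\tau_0)$, the Lyapunov--Schmidt reduction exploiting the phase-shift symmetry, and the final implicit function theorem using the transversality Assumption~\ref{ass:hopf condition} all match the paper's argument in structure and detail. The one ingredient you do not single out, and which the paper isolates as Proposition~\ref{prop:c alpha +h = c alpha}, is that the mean of $\rho_{\alpha+h,\tau}$ over one period depends only on $\alpha$ (and equals $\gamma(\alpha)$); this is what guarantees that the functional $G$ maps into the zero-mean space $C^{0,0}_{2\pi}$ and that the mean component of the fixed-point equation decouples as $\alpha = J\gamma(\alpha)$, but it fits naturally into the framework you describe.
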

\begin{remark}
	Given the ``periodic current'' $\boldsymbol{a}_v$ defined by \eqref{eq:periodic 
	current
		from nu_v}, the shape of the solution is known explicitly: for all $v \in (-v_0, 
		v_0)$, it
	holds that
	\[ \nu_v = \tilde{\nu}_{\boldsymbol{a}_v}, \]
	where $\tilde{\nu}_{\boldsymbol{a}_v}$, defined by \eqref{eq:definition de nu infty 
	a for periodic a} below, is known explicitly in terms of $b, f$ and 
	$\boldsymbol{a}_v$.
\end{remark}
\begin{notation}
	\label{notation:CT0 and CT00}
	For $T > 0$, we denote by $C^0_{T}$ the space of continuous and $T$-periodic 
	functions from $\mathbb{R}$ to $\mathbb{R}$ and by $C^{0,0}_{T}$ the 
	subspace of centered
	functions
	\[ C^{0,0}_{T} := \{ h \in C^0_{T}, \quad \int_{0}^{T}{ h(t) dt} = 0\}. \]
\end{notation}
We now give an outline of the proof of Theorem~\ref{th:main result hopf}. The proof 
is 
divided in two main parts. 

The first part is devoted to the study of an isolated neuron 
subject to a periodic external current. 
That is, given $\tau > 0$ and $\boldsymbol{a} \in C^0_{2 \pi \tau}$, we study the 
jump rate 
of an isolated neuron driven by $\boldsymbol{a}$. We give in 
Section~\ref{sec:priliminaries} 
estimates on the kernels $K_{\boldsymbol{a}}$ 
and $H_{\boldsymbol{a}}$.
We want to characterize the ``asymptotic'' jump rate of a neuron driven by this 
external 
periodic 
current. That is, informally
\[
\forall t \in \mathbb{R},\quad \rho_{\boldsymbol{a}}(t) = \lim_{k \in \mathbb{N},~k 
	\rightarrow 
	\infty} \r[][\boldsymbol{a}](t, -2 \pi k \tau).
\]
In order to characterize such limit $\rho_{\boldsymbol{a}}$, we introduce in 
Section~\ref{sec:non 
	homogeneous oscillations} a discrete-time Markov 
Chain corresponding to the phases of the successive spikes of the neuron driven by 
$\boldsymbol{a}$. We prove that this Markov Chain has a unique invariant measure, 
which is 
proportional to $\rho_{\boldsymbol{a}}$. This serves as a definition of 
$\rho_{\boldsymbol{a}}$.
Given this periodic jump rate $\rho_{\boldsymbol{a}} \in C^0_{2 \pi \tau}$, we give in 
Section~\ref{sec:periodic densities} an explicit description of the associated  
time-periodic probability densities, 
that we denote $(\tilde{\nu}_{\boldsymbol{a}}(t))_{t \in [0, 2 \pi \tau]}$.
Consequently, to find a $2 \pi \tau$-periodic solution of \eqref{NL-equation}, it is 
equivalent to find
$\boldsymbol{a} \in C^0_{2 \pi \tau}$ such that
\begin{equation}
	\boldsymbol{a} = J \rho_{\boldsymbol{a}}.
	\label{eq:periodic solution}
\end{equation}
One classical difficulty with Hopf bifurcation is that the period $2 \pi \tau$ itself is 
unknown: $\tau$
varies when the
interaction parameter $J$ varies. To address this problem, we make in 
Section~\ref{sec:reduction 2pi} a change of time to only consider $2\pi$-periodic 
functions. We define for all $t \in \mathbb{R}$
	\begin{equation}
		\label{eq:link rho a and rho a tau}
		\forall \boldsymbol{d} \in C^0_{2 \pi},\forall \tau > 0,\quad \rho_{\boldsymbol{d}, 
		\tau}(t) :=  
		\rho_{\boldsymbol{a}}(\tau t), \quad \text{ where } \quad a(t) := d(t / \tau). 
\end{equation} 
We shall see that this change of time has a simple probabilistic interpretation by 
scaling 
$b, f$ and $\boldsymbol{d}$ appropriately. In Section ~\ref{sec:regularity of rho}, we 
prove that the function $C^0_{2 \pi} \times \mathbb{R}^*_+ \ni (\boldsymbol{d}, \tau) 
\mapsto 
\rho_{\boldsymbol{d}, \tau} \in C^0_{2 
	\pi}$ is $\mathcal{C}^2$-Fréchet differentiable. Furthermore, 
	consider	\(\boldsymbol{d} = \alpha + \boldsymbol{h}\) with \( \boldsymbol{h} \in 
	C^{0,0}_{2 \pi}\) and $\alpha > 0$ is the mean 
	of $\boldsymbol{d}$ over one period.
	We prove that the mean number of spikes over one period only depends on 
	$\alpha$.
	The common value is obtained with the particular case \(\boldsymbol{h}\equiv 0\) 
	and 
	\eqref{eq:gamma alpha}. Thus, we prove
\begin{equation}
	\label{eq:mean over a period}
	\forall \boldsymbol{h} \in C^{0,0}_{2 \pi}, \forall \tau > 0, \quad 
	\frac{1}{2 
		\pi}\int_0^{2 \pi}{ \rho_{\alpha 
			+ \boldsymbol{h}, \tau}(u) 
		du} =  \gamma(\alpha).
\end{equation}
In the second part of the proof, we find self-consistent periodic solutions using the 
Lyapunov-Schmidt method. We introduce in Section~\ref{sec:fonctionnal G} the 
following 
functional
\[ C^{0,0}_{2 \pi} \times \mathbb{R}^*_+ \times \mathbb{R}^*_+ \ni (\boldsymbol{h}, 
\alpha, \tau) 
\mapsto 
G(\boldsymbol{h}, \alpha, \tau) := 
(\alpha + \boldsymbol{h}) - J(\alpha) \rho_{\alpha + \boldsymbol{h}, \tau}.\]
Using \eqref{eq:mean over a period}, this functional takes values in 
$C^{0,0}_{2 
	\pi}$. The roots of $G$, described by Proposition~\ref{prop:the zeros of G}, 
	match 
with 
the periodic solutions of \eqref{NL-equation}. For instance if $G(\boldsymbol{h}, 
\alpha, 
\tau) = 
0$, we set $a(t) := \alpha + h(t / \tau)$. This current $\boldsymbol{a}$ 
solves 
\eqref{eq:periodic 
	solution} with $J = J(\alpha)$ and so it can be used to define a  periodic solution 
	of  
\eqref{NL-equation}. Conversely, to any periodic solution of 
\eqref{NL-equation}, we can associate a root of $G$.
So Theorem~\ref{th:main result hopf} is equivalent to Proposition~\ref{prop:the 
zeros of 
	G}.
Sections~\ref{sec:linearization of G}, \ref{sec:fredholm operator},  
\ref{sec:lyapunov} and \ref{sec:2D} are then devoted to the proof of
Proposition~\ref{prop:the zeros of G}.
In Section~\ref{sec:linearization of G}, we prove that the linear operator 
$D_{\boldsymbol{h}}  G(0, \alpha, 
\tau)$ can be 
written using a convolution involving $\Theta_\alpha$, given by \eqref{eq:formule 
donnant 
	Theta.}. 
We then follow the method of \cite[Ch. 
I.8]{MR2859263}. In Section \ref{sec:fredholm operator}, we study the range and the 
kernel of 
$D_{\boldsymbol{h}}  G(0, \alpha_0, 
\tau_0)$: we prove that under the spectral Assumptions~\ref{ass:i / b0 roots of the 
	characteristic equation} and \ref{ass:nonresonance condition}, 
	$D_{\boldsymbol{h}}  
G(0, \alpha_0, 
\tau_0)$ is a Fredholm operator of index zero, with a kernel of dimension two. 
The problem of finding the roots of $G$ is a priori of infinite dimension 
($\boldsymbol{h}$ 
belongs 
to $C^{0,0}_{2 \pi}$).  In Section~\ref{sec:lyapunov} we apply the Lyapunov-Schmidt 
method 
to obtain an 
equivalent problem of dimension two. Finally in Section~\ref{sec:2D} we study the 
reduced 
2D-problem.

\section{Proof of Theorem~\ref{th:main result hopf}}
\label{sec:proof of main result}
\textit{Without risk of confusion, we alleviate the notation in the proofs: 
		we no more 
		use bold letters for small 
		perturbations \(h\) of a constant current \(\alpha_0\).}
\subsection{Preliminaries}
\label{sec:priliminaries}
Let $T > 0$, $s \in \mathbb{R}$ and  $\boldsymbol{a} \in C^0_T$ such that
\begin{equation}
	\label{eq:condition on b(0)}
	\inf_{t  \in [0, T]} a_t > -b(0). 
\end{equation}
For $x \geq 0$, we consider $\varphi^{\boldsymbol{a}}_{t, s}(x)$ the solution of the 
ODE
\begin{align}
	\label{eq:definition du flow}
	\frac{d}{dt} \varphi^{\boldsymbol{a}}_{t, s}(x) &= b(\varphi^{\boldsymbol{a}}_{t, 
	s}(x)) + a_t \\
	\varphi^{\boldsymbol{a}}_{s, s}(x)  &= x. \nonumber
\end{align}
By Assumption~\ref{as:linear drift}, this ODE has a unique solution. Moreover, the 
kernels $H^{\nu}_{\boldsymbol{a}}(t, s)$ and $K^{\nu}_{\boldsymbol{a}}(t, s)$, 
defined by \eqref{def:jump rate, survival, density of survival}, have  explicit 
expressions in term of the flow
\begin{align}
	\label{eq:explicit expression H}
	H^\nu_{\boldsymbol{a}}(t, s) &= 
	\int_{\mathbb{R}_+}{\EXP{-\int_s^t{f(\varphi^{\boldsymbol{a}}_{u,s}(x)) du}} 
	\nu(dx) }, \\
	K^\nu_{\boldsymbol{a}}(t, s) &= \int_{\mathbb{R}_+}{ f(\varphi^{\boldsymbol{a}}_{t, 
	s}(x)) \EXP{-\int_s^t{f(\varphi^{\boldsymbol{a}}_{u,s}(x)) du}} \nu(dx) }.
	\label{eq:explicit expression K}
\end{align}
The function $s \mapsto \varphi^{\boldsymbol{a}}_{t, s}(0)$ belongs to 
$\mathcal{C}^1((-\infty, t]; 
\mathbb{R}_+)$ and
\begin{equation}
	\label{eq: d/ds du flow}
	\frac{d}{ds}  \varphi^{\boldsymbol{a}}_{t, s}(0) = -\left[ b(0) + a_s \right] \EXP{ 
	\int_s^t{
			b'(\varphi^{\boldsymbol{a}}_{\theta, s}(0) ) d\theta }}.
\end{equation}
In particular, under the assumption \eqref{eq:condition on b(0)}, $s 
\mapsto \varphi^{\boldsymbol{a}}_{t, s}(0)$ is strictly decreasing on $(-\infty, t]$, for 
all $t$.  
Define then
\begin{equation}
	\sigma_{\boldsymbol{a}}(t) := \lim_{s \rightarrow -\infty} 
	\varphi^{\boldsymbol{a}}_{t, s}(0) \in 
	\mathbb{R}^*_+ \cup \{+ \infty \}.
	\label{def:sigma_a for a periodic.}
\end{equation}
Given $\boldsymbol{d} \in C^0_T$ and $\eta > 0$, we consider the following open balls 
of $C^0_T$:
\begin{equation}
	\label{eq:open ball of C_T}
	B^T_\eta(\boldsymbol{d}) := \{ \boldsymbol{a} \in C^0_T,\quad \sup_{t \in [0, T]} 
	|a_t - 
	d_t| < \eta \}.
\end{equation}
\begin{lemma}
	\label{lem: prop of sigma_a near alpha_0}
	Let $T >0$ and $b: \mathbb{R}_+ \rightarrow \mathbb{R}$ such that
	Assumption~\ref{as:linear drift} holds. Let $\alpha_0 > 0$ satisfying
	Assumption~\ref{hyp:alpha is non degenerate}. There exists $\eta_0 > 0$ such 
	that 
	for
	all $\boldsymbol{a} \in B^T_{\eta_0}(\alpha_0)$, it holds that
	\begin{enumerate}
		\item If $\sigma_{\alpha_0} = \infty$, then for all $t \in [0, T]$, 
		$\sigma_{\boldsymbol{a}}(t) = +\infty$.
		\item If $\sigma_{\alpha_0} < \infty$, then the function $t \mapsto 
		\sigma_{\boldsymbol{a}}(t)$
		belongs to $C^0_T$ and
		\[ \inf_{\boldsymbol{a} \in B^T_{\eta_0}(\alpha_0)}  \inf_{t \in [0, T]}
		\sigma_{\boldsymbol{a}}(t) > 0. \]
	\end{enumerate}
\end{lemma}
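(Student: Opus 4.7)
The proof splits along the dichotomy in Assumption~\ref{hyp:alpha is non degenerate}. In both cases the strategy is to control the flow uniformly over all $\boldsymbol{a}\in B^T_{\eta_0}(\alpha_0)$ by first pinning down a neighborhood of $\sigma_{\alpha_0}$ from the autonomous picture and then perturbing. Note that condition \eqref{eq:condition on b(0)} is automatic for $\eta_0<\alpha_0$ since $b(0)\geq 0$, so $\varphi^{\boldsymbol{a}}_{t,s}(0)$ is strictly increasing in $t$.

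\textbf{Case }$\sigma_{\alpha_0}=\infty$. By \eqref{hyp:sigma_alpha is not finite}, $c_0:=\inf_{x\geq 0}(b(x)+\alpha_0)>0$. Pick $\eta_0<c_0/2$. For any $\boldsymbol{a}\in B^T_{\eta_0}(\alpha_0)$, $\inf_{x\geq 0,t\in\mathbb{R}}(b(x)+a_t)\geq c_0/2$, hence by \eqref{eq:definition du flow} $\varphi^{\boldsymbol{a}}_{t,s}(0)\geq (c_0/2)(t-s)\to+\infty$ as $s\to-\infty$, so $\sigma_{\boldsymbol{a}}(t)=+\infty$ for every $t$.

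\textbf{Case }$\sigma_{\alpha_0}<\infty$. Here $b'(\sigma_{\alpha_0})<0$ and $b(\sigma_{\alpha_0})+\alpha_0=0$. By continuity of $b'$ (Assumption~\ref{as:linear drift}), fix $\delta>0$ and $c>0$ with $b'(x)\leq -c$ for all $x\in[\sigma_{\alpha_0}-\delta,\sigma_{\alpha_0}+\delta]=:U$. Since $b+\alpha_0$ is positive on $[0,\sigma_{\alpha_0})$ and negative just past $\sigma_{\alpha_0}$ (again using $b'(\sigma_{\alpha_0})<0$), one has $b(\sigma_{\alpha_0}-\delta)+\alpha_0>0$ and $b(\sigma_{\alpha_0}+\delta)+\alpha_0<0$; shrinking $\eta_0$ ensures that these strict inequalities survive the perturbation, uniformly in $\boldsymbol{a}\in B^T_{\eta_0}(\alpha_0)$ and $t\in\mathbb{R}$. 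Moreover, by compactness $\inf_{x\in[0,\sigma_{\alpha_0}-\delta]}(b(x)+\alpha_0)=:m>0$, so (taking $\eta_0<m/2$) $b(x)+a_t\geq m/2$ on $[0,\sigma_{\alpha_0}-\delta]$.

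These three facts imply: (i) starting from $0$ at time $s$, the flow reaches $\sigma_{\alpha_0}-\delta$ in time at most $2\sigma_{\alpha_0}/m$, and then stays in $U$ forever (forward invariance of $U$ from both sides); (ii) inside $U$ the flow is a strict $c$-contraction in the initial condition, since for any two solutions $y_1,y_2$ of \eqref{eq:definition du flow} with values in $U$, $\tfrac{d}{dt}(y_1-y_2)^2=2(b(y_1)-b(y_2))(y_1-y_2)\leq -2c(y_1-y_2)^2$. Applied to two starting times $s_1<s_2$ (initial conditions $0$ and $\varphi^{\boldsymbol{a}}_{s_1,s_2}(0)\in[0,\sigma_{\alpha_0}+\delta]$), this yields, for $t$ large enough that both trajectories are in $U$,
\[
\bigl|\varphi^{\boldsymbol{a}}_{t,s_1}(0)-\varphi^{\boldsymbol{a}}_{t,s_2}(0)\bigr|\leq 2\delta\, e^{-c(t-s_2-2\sigma_{\alpha_0}/m)}.
\]
Hence $s\mapsto \varphi^{\boldsymbol{a}}_{t,s}(0)$ is Cauchy as $s\to-\infty$ and admits a limit $\sigma_{\boldsymbol{a}}(t)\in[\sigma_{\alpha_0}-\delta,\sigma_{\alpha_0}+\delta]$, with the convergence uniform in $t\in\mathbb{R}$ and in $\boldsymbol{a}\in B^T_{\eta_0}(\alpha_0)$.

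Uniform convergence of the continuous maps $t\mapsto\varphi^{\boldsymbol{a}}_{t,s}(0)$ gives $\sigma_{\boldsymbol{a}}\in C^0(\mathbb{R})$. Periodicity follows from the identity $\varphi^{\boldsymbol{a}}_{t+T,s+T}(0)=\varphi^{\boldsymbol{a}}_{t,s}(0)$ (consequence of the $T$-periodicity of $\boldsymbol{a}$ and uniqueness of solutions): letting $s\to-\infty$ yields $\sigma_{\boldsymbol{a}}(t+T)=\sigma_{\boldsymbol{a}}(t)$. The uniform lower bound
\[
\inf_{\boldsymbol{a}\in B^T_{\eta_0}(\alpha_0)}\inf_{t\in[0,T]}\sigma_{\boldsymbol{a}}(t)\geq \sigma_{\alpha_0}-\delta>0
\]
holds provided $\delta<\sigma_{\alpha_0}$, which we may assume.

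The one delicate point is keeping all thresholds $(\delta,\eta_0,c,m)$ consistent so that forward invariance of $U$, attraction of $0$ toward $U$, and the contraction rate are simultaneously uniform in $\boldsymbol{a}\in B^T_{\eta_0}(\alpha_0)$; once this is arranged, the contraction estimate drives the rest of the proof.
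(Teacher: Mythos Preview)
Your proof is correct, modulo two small slips: the flow value at time $s_2$ of the trajectory started at $s_1<s_2$ is $\varphi^{\boldsymbol{a}}_{s_2,s_1}(0)$, not $\varphi^{\boldsymbol{a}}_{s_1,s_2}(0)$; and the convergence of $\varphi^{\boldsymbol{a}}_{t,s}(0)$ to $\sigma_{\boldsymbol{a}}(t)$ is not uniform over all $t\in\mathbb{R}$ (for $t$ close to $s$ the flow is near $0$), only locally uniform in $t$, which is what you actually use to pass continuity to the limit.

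Your route in the case $\sigma_{\alpha_0}<\infty$ is genuinely different from the paper's. The paper applies the implicit function theorem to $(x,\eta)\mapsto b(x)+\alpha_0+\eta$ to track $\sigma_{\alpha_0+\eta}$, then bounds the flow above by comparison with the constant current $\alpha_0+\eta_0$, proves continuity of $t\mapsto\sigma_{\boldsymbol{a}}(t)$ by a direct Lipschitz estimate $|\varphi^{\boldsymbol{a}}_{t+\eta,s}(0)-\varphi^{\boldsymbol{a}}_{t,s}(0)|\leq C\eta$ uniform in $s$ (no uniform-limit argument), and obtains the positive lower bound by integrating the explicit identity \eqref{eq: d/ds du flow} for $\tfrac{d}{ds}\varphi^{\boldsymbol{a}}_{t,s}(0)$, yielding the quantitative inequality \eqref{eq:minoration of the flow when sigma is finite}. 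You instead build a forward-invariant neighborhood $U$ of $\sigma_{\alpha_0}$ on which $b'\leq -c$, use a Gr\"onwall contraction inside $U$, and read off existence of the limit, continuity, and the lower bound simultaneously. Your argument is the more robust dynamical-systems picture and gives an explicit exponential approach rate; the paper's computation is more hands-on but produces the specific bound \eqref{eq:minoration of the flow when sigma is finite}, which is precisely what is invoked in the next lemma to control $H_{\boldsymbol{a}}$ and $K_{\boldsymbol{a}}$.
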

\begin{proof}
	Assume first that $\sigma_{\alpha_0} = \infty$, and let $\eta_0 := \frac{1}{2} \left[ \inf_{x 
	\geq 0}
	b(x) + \alpha_0 \right]$, which is strictly positive by assumption. Then it holds that
	$ \inf_{t \geq 0} \inf_{x \geq 0} b(x) + a_t \geq \eta_0 $
	and so
	\begin{equation}
		\label{eq:the flow grows at least linearly when sigma is infinite}
		\varphi^{\boldsymbol{a}}_{t, s}(0) \geq \eta_0 (t-s).
	\end{equation}
	Letting $s$ tend to $-\infty$, we deduce that 
	$\sigma_{\boldsymbol{a}}(t) = +\infty$.\\
Assume now that $\sigma_{\alpha_0} < \infty$. Using 
\eqref{hyp: sigma_alpha is finite},  we apply the implicit function theorem to
\[ 
(x, \eta) \mapsto b(x) + \alpha_0 + \eta
\]
at $(\sigma_{\alpha_0}, 0)$: there exists $\eta_0 > 0$ 
and a function $\eta \mapsto 
\sigma_{\alpha_0+\eta} \in \mathcal{C}^1([0, \eta_0]; \mathbb{R}^*_+)$ such that
\[ \forall \eta \in [0, \eta_0],\quad  \sigma_{\alpha_0} \leq \sigma_{\alpha_0 + \eta} < \infty \quad 
\text{ and } \quad  \sigma_{\alpha_0 + \eta} = \inf_{x \geq 0} \{ b(x) + \alpha_0 + \eta = 0 \}. \]
In addition, we choose $\eta_0$ such that $\eta_0 < b(0) + \alpha_0$.
Let $\boldsymbol{a} \in C^0_T$ such 
that
$\sup_{t \in [0, T]} |a_t - \alpha_0| \leq \eta_0$. It holds that
\begin{equation} \forall t \geq s,\quad  \varphi^{\boldsymbol{a}}_{t, s}(0) \leq \varphi^{\alpha_0 + 
		\eta_0}_{t, s}(0) \leq \sigma_{\alpha_0 + \eta_0}. 
	\label{eq:17032021}
\end{equation}
In particular $\sigma_{\boldsymbol{a}}(t) < \infty$. We prove that this function is right-continuous in 
$t$. We fix $t \geq s$ and $\eta \in [0, \eta_0]$, 
we have
\begin{align*} \varphi^{\boldsymbol{a}}_{t + \eta,s}(0) - \varphi^{\boldsymbol{a}}_{t ,s}(0) 
	&= \int_t^{t+\eta}{ b(\varphi^{\boldsymbol{a}}_{t+u, s}(0)) du} + \int_t^{t+\eta}{a_u du}.
\end{align*}
Let $A_0 := \sup_{x \in [0, \sigma_{\alpha_0 + \eta_0}]} |b(x)| < \infty$ so $|\varphi^{\boldsymbol{a}}_{t + \eta,s}(0) 
- \varphi^{\boldsymbol{a}}_{t ,s}(0)| \leq (A_0  
+ ||\boldsymbol{a}||_\infty) \eta$. 
Letting $s$ tend to $-\infty$ we deduce that $t \mapsto \sigma_{\boldsymbol{a}}(t)$ is
right-continuous. Left-continuity is proved similarly. Using 
$\varphi^{\boldsymbol{a}}_{t+T, s+T}(0) =
\varphi^{\boldsymbol{a}}_{t, s}(0)$, it holds that $t \mapsto
\sigma_{\boldsymbol{a}}(t)$ is $T$-periodic. Finally, because $s \mapsto
\varphi^{\boldsymbol{a}}_{t, s}(0)$ is strictly decreasing, and takes value $0$ when $s =
t$, we deduce that $\sigma_{\boldsymbol{a}}(t) > 0$. More precisely, let
\[ m_0 := - \min_{x \in [0, \sigma_{\alpha_0+\eta_0}]} b'(x).   \]
By \eqref{hyp: sigma_alpha is finite}, it holds that $m_0 > 0$. Moreover, using \eqref{eq: d/ds du 
	flow}, we deduce that
\[ \frac{d}{ds} \varphi^{\boldsymbol{a}}_{t, s}(0) \leq -(b(0) + \alpha_0 - \eta_0)
e^{-m_0(t-s)}, \]
and so
\begin{equation}
	\label{eq:minoration of the flow when sigma is finite}
	\forall s \leq t, \quad  \varphi^{\boldsymbol{a}}_{t, s}(0) \geq (b(0)+\alpha_0-\eta_0)
	\frac{1 - e^{-m_0(t-s) }}{m_0}.
\end{equation}
It ends the proof.
\end{proof}
\begin{lemma}
	\label{lem:estimates on the kernels}
	Grant Assumptions~\ref{as:linear drift} and \ref{as:hyp on f}. Let $\alpha_0 > 0$ 
	such
	that  Assumption~\ref{hyp:alpha is non degenerate} holds. There exists 
	$\lambda_0,
	\eta_0, s_0 > 0$ (only depending on $\alpha_0$ and $b$) such that
	for all $T > 0$, for all $\boldsymbol{a} \in B^T_{\eta_0}(\alpha_0)$, it holds that
	\begin{equation}
		\label{eq:210321}
		\forall t, s,\quad t-s \geq s_0 \implies \varphi^{\boldsymbol{a}}_{t, s}(0) \geq
		\lambda_0. 
	\end{equation}
	Moreover, if $\sigma_{\alpha_0} = \infty$, $\lambda_0$ can be chosen arbitrarily 
	large.
	Finally, it holds that
	\begin{align}
		\label{eq:210321:H}
		\sup_{T > 0} ~~ \sup_{\boldsymbol{a} \in B^T_{\eta_0}(\alpha_0)}  ~~ \sup_{t 
		\geq s}
		~~
		&
		H_{\boldsymbol{a}}(t, s) e^{f(\lambda_0)(t-s)} < \infty, \\
		\sup_{T > 0} ~~ \sup_{\boldsymbol{a} \in B^T_{\eta_0}(\alpha_0)}  ~~ \sup_{t 
		\geq s}
		~~
		&
		K_{\boldsymbol{a}}(t, s)  e^{f(\lambda_0)(t-s)} < \infty.
		\label{eq:210321:K}
	\end{align}
\end{lemma}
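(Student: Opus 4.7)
The plan is to prove the three conclusions in the order stated, leaning on Lemma~\ref{lem: prop of sigma_a near alpha_0} throughout; the main obstacle will be the $K$-bound \eqref{eq:210321:K} in the unbounded case $\sigma_{\alpha_0}=\infty$, where $f(\varphi^{\boldsymbol{a}}_{t,s}(0))$ need not stay bounded.

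For the flow lower bound \eqref{eq:210321}, I would distinguish the two regimes of Assumption~\ref{hyp:alpha is non degenerate}. When $\sigma_{\alpha_0}=\infty$, the inequality \eqref{eq:the flow grows at least linearly when sigma is infinite} established in the proof of the previous lemma gives $\varphi^{\boldsymbol{a}}_{t,s}(0)\geq \eta'_0 (t-s)$ with $\eta'_0 := \tfrac{1}{2}[\inf_{x\geq 0} b(x)+\alpha_0]>0$; hence any prescribed $\lambda_0>0$ is reached by taking $s_0 := \lambda_0/\eta'_0$, which simultaneously establishes the ``arbitrarily large'' clause. When $\sigma_{\alpha_0}<\infty$, inequality \eqref{eq:minoration of the flow when sigma is finite} yields a bound whose limit as $t-s\to\infty$ is $(b(0)+\alpha_0-\eta_0)/m_0>0$; one then fixes $\lambda_0$ to be half this limit and $s_0$ large enough that $1-e^{-m_0 s_0}\geq 1/2$.

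The $H$-bound \eqref{eq:210321:H} follows from the explicit formula \eqref{eq:explicit expression H} with $\nu=\delta_0$. By the previous step, $\varphi^{\boldsymbol{a}}_{u,s}(0)\geq \lambda_0$ whenever $u-s\geq s_0$; since $f$ is increasing, $f(\varphi^{\boldsymbol{a}}_{u,s}(0))\geq f(\lambda_0)$ on that range. Splitting $\int_s^t$ into $\int_s^{s+s_0}+\int_{s+s_0}^t$ gives $\int_s^t f(\varphi^{\boldsymbol{a}}_{u,s}(0))\,du \geq f(\lambda_0)(t-s-s_0)_+$, so $H_{\boldsymbol{a}}(t,s)\leq e^{f(\lambda_0)s_0}e^{-f(\lambda_0)(t-s)}$, uniformly in $T$ and $\boldsymbol{a}\in B^T_{\eta_0}(\alpha_0)$.

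For the $K$-bound \eqref{eq:210321:K}, I start from $K_{\boldsymbol{a}}(t,s) = f(\varphi^{\boldsymbol{a}}_{t,s}(0))\,H_{\boldsymbol{a}}(t,s)$ and split into cases. If $\sigma_{\alpha_0}<\infty$, Lemma~\ref{lem: prop of sigma_a near alpha_0} supplies the upper bound $\varphi^{\boldsymbol{a}}_{t,s}(0)\leq \sigma_{\alpha_0+\eta_0}<\infty$, so $f\circ\varphi$ is uniformly bounded and the conclusion follows directly from the $H$-bound (the same applies when $\sigma_{\alpha_0}=\infty$ but $f$ is bounded). The remaining case $\sigma_{\alpha_0}=\infty$ with $f$ unbounded is precisely where the freedom to take $\lambda_0$ arbitrarily large is needed. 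I would first apply Grönwall to \eqref{eq:definition du flow}, using $L:=\sup|b'|<\infty$ from Assumption~\ref{as:linear drift}, to obtain $\varphi^{\boldsymbol{a}}_{t,s}(0)\leq C_1 e^{L(t-s)}$ with $C_1$ depending only on $b,\alpha_0,\eta_0$; combining with Assumption~\ref{as:f(xy)} and the polynomial upper bound of Remark~\ref{rk:comportement at infty} gives $f(\varphi^{\boldsymbol{a}}_{t,s}(0))\leq C_2 e^{Lp(t-s)}$ for some $p\geq 0$. Then, since $f$ is unbounded and $\lambda_0$ can be chosen arbitrarily large, I would select an auxiliary $\lambda_1>\lambda_0$ with $f(\lambda_1)\geq f(\lambda_0)+Lp+1$ and apply the first two steps at level $\lambda_1$ to obtain the stronger $H$-decay $H_{\boldsymbol{a}}(t,s)\leq C_3 e^{-f(\lambda_1)(t-s)}$. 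Combining,
\[
K_{\boldsymbol{a}}(t,s)\,e^{f(\lambda_0)(t-s)}\leq C_4 e^{(Lp+f(\lambda_0)-f(\lambda_1))(t-s)}\leq C_4 e^{-(t-s)},
\]
which is uniformly bounded, ending the proof.
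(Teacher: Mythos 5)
Your proposal is correct and follows essentially the same route as the paper's proof: the flow lower bound from the two inequalities established for Lemma~\ref{lem: prop of sigma_a near alpha_0}, the $H$-bound from the explicit formula \eqref{eq:explicit expression H}, and for \eqref{eq:210321:K} the same case split, with the Gr\"onwall estimate $\varphi^{\boldsymbol{a}}_{t,s}(0)\leq C_1e^{L(t-s)}$, the polynomial growth of $f$ from Remark~\ref{rk:comportement at infty}, and the choice of an auxiliary level $\lambda_1$ with $f(\lambda_1)\geq f(\lambda_0)+Lp$ in the unbounded case. Your write-up merely spells out the quantitative choices of $s_0$, $\lambda_0$ that the paper leaves implicit.
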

\begin{proof}
		The inequality~\eqref{eq:210321} is a direct consequence of 
		\eqref{eq:minoration 
			of the flow when sigma is finite} if $\sigma_{\alpha_0} < \infty$ and of 
			\eqref{eq:the flow 
			grows at least linearly when sigma is infinite} if $\sigma_{\alpha_0} = \infty$. 
			Then, 
		\eqref{eq:210321:H} follows from the explicit expression \eqref{eq:explicit 
		expression H} of 
		$H_{\boldsymbol{a}}$ and \eqref{eq:210321}. 
		
		If $\sigma_{\alpha_0} < \infty$ or $f$ is bounded on $\mathbb{R}_+$, 
		\eqref{eq:210321:K} 
		follows from \eqref{eq:210321:H} by \eqref{eq:explicit expression K}.
		
		It remains to prove \eqref{eq:210321:K} if $\sigma_{\alpha_0} = \infty$ and 
		$\lim_{x \rightarrow 
			\infty}f(x) = \infty$. Denote by $L$ the Lipschitz constant of $b$. The 
		Grönwall's lemma 
		gives the existence of a constant $C_1$ (only depending on $b$, $\alpha_0$, 
		$\eta_0$) such 
		that
		$\varphi^{\boldsymbol{a}}_{t, s}(0) \leq C_1 e^{L(t-s)}$. By 
		Remark~\ref{rk:comportement at infty}, we 
		know that $f(x) \leq C_2 x^p$. Overall, $f(\varphi^{\boldsymbol{a}}_{t, s}(0)) 
		\leq C e^{Lp(t-s)}.$ 
		Fix $\lambda_0 \geq 0$. There exists $\lambda_1$ such that $f(\lambda_1) \geq 
		f(\lambda_0) + Lp$ 
		and \eqref{eq:210321} holds with $\lambda_1$. It ends the proof of 
		\eqref{eq:210321:K}.
\end{proof}
\subsection{Study of the non-homogeneous linear equation}
\label{sec:non homogeneous oscillations}
In this section, we study the asymptotic jump rate of an ``isolated'' neuron driven by 
a periodic 
continuous function.
Grant Assumptions~\ref{as:linear drift}, \ref{as:hyp on f} and let
$\alpha_0 > 0$ such that Assumption~\ref{hyp:alpha is non degenerate} holds.
Let $\lambda_0, \eta_0 > 0$ be given by Lemma~\ref{lem:estimates on the kernels} 
and
$T > 0$. Consider $\boldsymbol{a} \in B^T_{\eta_0}(\alpha_0)$. Following the
terminology of \cite{CTV}, we say that $\boldsymbol{a}$ is the ``external current''.
Let $\r[][\boldsymbol{a}]$ be the solution of the Volterra equation 
$\r[][\boldsymbol{a}] = 
\K[][\boldsymbol{a}] + \K[][\boldsymbol{a}] * \r[][\boldsymbol{a}]$.
We consider the following limit
\[ \forall t \in [0, T], \quad \rho_{\boldsymbol{a}}(t) = \lim_{k \in \mathbb{N},~k 
\rightarrow \infty} 
r_{\boldsymbol{a}}(t, -kT). \]
The goal of this section is to show that $\rho_{\boldsymbol{a}}$ is well defined and 
to 
study some 
of its properties.
First, \eqref{eq:the volterra integral equation} and \eqref{eq:the time of the last spike 
	density 
	sum to 1} write 
\begin{align*}
	\forall t\in \mathbb{R}, \quad \r[][\boldsymbol{a}](t, -k T) &= 
	\K[][\boldsymbol{a}](t, 
	-kT) + \int_{-kT}^t{ \K[][\boldsymbol{a}](t, s) \r[][\boldsymbol{a}](s, -kT) ds }, \\
	1 &= \H[][\boldsymbol{a}](t, -kT) + \int_{-kT}^t{ \H[][\boldsymbol{a}](t, s) 
	\r[][\boldsymbol{a}](s, -kT) ds}.
\end{align*}
Letting $k \rightarrow \infty$, we find that $\rho_{\boldsymbol{a}}$ has to solve
\begin{align}
	\forall t \in \mathbb{R}, \quad  \rho_{\boldsymbol{a}}(t) &= \int_{-\infty}^t{ 
	\K[][\boldsymbol{a}](t, s) \rho_{\boldsymbol{a}}(s) ds. }
	\label{eq:asymptotic periodic jump rate} \\
	1 &= \int_{-\infty}^t{ \H[][\boldsymbol{a}](t, s) \rho_{\boldsymbol{a}}(s) ds}.
	\label{eq:asymptotic periodic jump rate normalization condition}
\end{align}
Note that if $\rho_{\boldsymbol{a}}$ is a solution of \eqref{eq:asymptotic periodic 
jump rate}, then it automatically holds that the function $t \mapsto  \int_{-\infty}^t{ 
\H[][\boldsymbol{a}](t, s) \rho_{\boldsymbol{a}}(s) ds}$ is constant (its derivative is 
null).
In Lemma~\ref{lem:space of solution is of dimension 1} below, we prove that the 
solutions
of
equation~\eqref{eq:asymptotic periodic jump rate} form a linear space of dimension 
1.
Consequently \eqref{eq:asymptotic periodic jump rate} together with
\eqref{eq:asymptotic periodic jump rate normalization condition} have a unique 
solution: this will 
serve as the definition of  $\rho_{\boldsymbol{a}}$.

\subsubsection*{A probabilistic interpretation of \eqref{eq:asymptotic periodic 
jump rate} and \eqref{eq:asymptotic periodic jump rate normalization condition}}
Let $x$ be a $T$-periodic solution of \eqref{eq:asymptotic periodic jump rate}. We 
have for all $t \in  
[0, T]$
\begin{align*}
	x(t) = & \int_{-\infty}^T{ \K[][\boldsymbol{a}](t, s) x(s) ds }, \quad \text{(with the convention } 
	K_{\boldsymbol{a}}(t, s) = 0 \text{ for $s > t$)} \\
	= & \sum_{k \geq 0} { \int_{-kT}^{T - kT } {   \K[][\boldsymbol{a}](t, s) x(s) ds } } \\
	= & \sum_{k \geq 0} { \int_{0}^{T} {   \K[][\boldsymbol{a}](t, u - kT) x(u) du } } 
	\quad \text {(by 
		the change of variable $u = s + k T$)}.
\end{align*}
Define for all $t,s \in [0, T]$
\[ K^T_{\boldsymbol{a}}(t, s) := \sum_{k \geq 0}{ \K[][\boldsymbol{a}](t, s - k T) }. \]
Note that by Lemma~\ref{lem:estimates on the kernels} we have normal convergence 
since
\[ \forall t,s \in [0, T], \quad K_{\boldsymbol{a}}(t, s - kT) \leq C e^{-f(\lambda_0)k 
 T  }, \]
for some constant $C$ only depending on $b,f$, $\alpha_0, \eta_0$ and $\lambda_0$.
We deduce that $x$ solves
\begin{equation}
	x(t) = \int_0^T{ K^T_{\boldsymbol{a}}(t, s) x(s) ds}.
	\label{eq:limit equation periodic on [0,T]}
\end{equation}
Using that $\boldsymbol{a}$ is $T$-periodic, we have
\begin{equation}
	\forall t \geq s,\quad K_{\boldsymbol{a}}(t + T, s+T) = K_{\boldsymbol{a}}(t,s). 
	\label{eq: Ka t+T, s+T}
\end{equation}
Moreover, $K_{\boldsymbol{a}}$ is a probability density so
\begin{equation} \forall s \in \mathbb{R},\quad \int_s^\infty{ K_{\boldsymbol{a}}(t, s) 
dt} = 1. 
	\label{eq:Ka density}
\end{equation}
From \eqref{eq: Ka t+T, s+T} and \eqref{eq:Ka density}, we deduce that
\begin{equation} 
	\forall s \in [0, T],\quad \int_0^T{ 
		K^T_{\boldsymbol{a}}(t, s) dt} = 1.
	\label{eq:identity integral K_T}
\end{equation}
In view of \eqref{eq:identity integral K_T}, $K^T_{\boldsymbol{a}}( \cdot , s) $ can be 
seen 
as the transition probability kernel of a Markov Chain acting on the continuous 
space $[0, 
T]$. The interpretation of this Markov Chain is the following. Let 
$(\Y[\boldsymbol{a}][\nu][t])_{t \geq 0}$ be the solution of 
\eqref{non-homegeneous-SDE}, starting at time $0$ with law $\nu$ and driven by 
the 
$T$-periodic current $\boldsymbol{a}$. Define $(\tau_i)_{i \geq 1}$ the times of the 
successive jumps of $(\Y[\boldsymbol{a}][\nu][t])_{t \geq 0}$. Let $\phi_i \in [0, T)$
and $\Delta_i \in \mathbb{N}$ be defined by:
\begin{equation}
	\label{eq:proba interpretation Markov}
	\phi_i := \tau_i - \floor*{\frac{\tau_i}{T}}T, \quad  \tau_{i+1} - \tau_i =: \Delta_{i+1}  T 
	+ \phi_{i+1} - \phi_i.
\end{equation}
That is, $\phi_i$ is the \textit{phase} of the $i$-ith jump, while $\Delta_i$ is the 
number of 
``revolutions'' between $\tau_{i-1}$ and $\tau_i$:
\[ \Delta_i = \# \{k \in \mathbb{N},\quad kT \in [\tau_{i-1}, \tau_i) \}. \]
In other words, if one considers that a period is a ``lap'', \(\Delta_i\) is the number of 
times 
we cross the start line of the lap between two spikes.

Then, $(\phi_i, \Delta_i)_{i \geq 0}$ is Markov, with a transition probability given by
\[ \forall A \in \mathcal{B}([0, T]),~\forall n \in \mathbb{N},\quad \mathbb{P}(\phi_{i+1} 
\in A, \Delta_{i+1} = n | \phi_i) = \int_A{ \K[][\boldsymbol{a}](t+ n T, \phi_i)  dt}.\]
\textbf{In particular, $(\phi_i)_{i \geq 0}$ is Markov, with a transition probability given 
by 
	$K^T_{\boldsymbol{a}}$}.
With some slight abuse of notations, we also write $K^T_{\boldsymbol{a}}$ for the 
linear operator which maps $y \in L^1([0, T])$ to
\begin{equation}
	\label{eq:notation as a linear operator}
	K_{\boldsymbol{a}}^T(y) := t \mapsto \int_0^T{ K^T_{\boldsymbol{a}}(t, s) y(s) ds } 
	\in
	L^1([0, T]).
\end{equation}
\begin{lemma}
	\label{lem:compact operator}
	Let $\boldsymbol{a} \in C^0_T$.
	The linear operator $K^T_{\boldsymbol{a}}: L^1([0, T]) \rightarrow L^1([0, T])$ is a 
	compact operator. Moreover, if $y \in L^1([0, T])$, then $K^T_{\boldsymbol{a}} (y) 
	\in 
	C^0_T$.
\end{lemma}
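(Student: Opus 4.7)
The plan is to split the kernel $K^T_{\boldsymbol{a}}$ into a Volterra piece carrying a diagonal jump and a smooth remainder, handle each separately, and then assemble. Concretely, for $(t,s) \in [0,T]^2$ I would write
\[ K^T_{\boldsymbol{a}}(t,s) = K_{\boldsymbol{a}}(t,s) \indic{s \leq t} + R_{\boldsymbol{a}}(t,s), \quad R_{\boldsymbol{a}}(t,s) := \sum_{k \geq 1} K_{\boldsymbol{a}}(t, s - kT). \]
Lemma~\ref{lem:estimates on the kernels} yields $K_{\boldsymbol{a}}(t, s - kT) \leq C e^{-f(\lambda_0)(t-s+kT)}$ as soon as $kT \geq s_0$, so the series converges normally on $[0,T]^2$. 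Since for $k \geq 1$ the summand never touches its diagonal (we have $s - kT < 0 \leq t$), the remainder $R_{\boldsymbol{a}}$ is continuous on the compact $[0,T]^2$, while the Volterra kernel is continuous on the closed sub-triangle $\{0 \leq s \leq t \leq T\}$ and globally bounded on $[0,T]^2$.

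To establish $K^T_{\boldsymbol{a}}(y) \in C^0_T$ for $y \in L^1$, I would prove continuity on $[0,T]$ of each piece and then check periodicity at the endpoints. For the Volterra part $V(y)(t) := \int_0^t K_{\boldsymbol{a}}(t,s) y(s) ds$, I use the standard decomposition
\[ V(y)(t+h) - V(y)(t) = \int_0^t [K_{\boldsymbol{a}}(t+h,s) - K_{\boldsymbol{a}}(t,s)] y(s) ds + \int_t^{t+h} K_{\boldsymbol{a}}(t+h,s) y(s) ds, \]
where the first integral tends to $0$ by uniform continuity of $K_{\boldsymbol{a}}$ on the compact sub-triangle and the second by absolute continuity of the $L^1$ integral. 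Continuity of $t \mapsto \int_0^T R_{\boldsymbol{a}}(t,s) y(s) ds$ is immediate from $R_{\boldsymbol{a}} \in C^0$ by dominated convergence. For $T$-periodicity, I would exploit the relation $K_{\boldsymbol{a}}(t+T, s+T) = K_{\boldsymbol{a}}(t,s)$ (a consequence of the $T$-periodicity of $\boldsymbol{a}$): reindexing the series gives $K^T_{\boldsymbol{a}}(T,s) - K^T_{\boldsymbol{a}}(0,s) = -K_{\boldsymbol{a}}(0,s)$, which vanishes for $s>0$, so $K^T_{\boldsymbol{a}}(y)(0) = K^T_{\boldsymbol{a}}(y)(T)$.

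For compactness on $L^1$, the two pieces require different tools. The smooth remainder is easy: uniform continuity of $R_{\boldsymbol{a}}$ on the compact $[0,T]^2$ yields a uniformly bounded and $C^0$-equicontinuous image of the $L^1$-unit ball, hence precompact in $C^0$ by Arzelà-Ascoli and, via the continuous inclusion $C^0 \hookrightarrow L^1$, precompact in $L^1$. For the Volterra part, Arzelà-Ascoli in $C^0$ fails because concentrating $y$ near a point destroys uniform equicontinuity. Instead, I would invoke the Fréchet-Kolmogorov criterion directly in $L^1$: Fubini produces
\[ \int_0^T |V(y)(t+h) - V(y)(t)| dt \leq \|y\|_{L^1} \left( \sup_{s \in [0,T]} \int_s^T |K_{\boldsymbol{a}}(t+h,s) - K_{\boldsymbol{a}}(t,s)| dt + h \|K_{\boldsymbol{a}}\|_\infty \right), \]
where the first bracketed quantity tends to $0$ by uniform continuity of $K_{\boldsymbol{a}}$ on the compact sub-triangle and the second is $O(h)$, so $L^1$-translates are equicontinuous uniformly over the unit ball.

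The main obstacle is precisely the diagonal jump of the Volterra kernel, which rules out a direct $C^0$-Arzelà-Ascoli argument. Switching to $L^1$-equicontinuity of translates resolves it: averaging the unit-size jump over a window of length $h$ produces the harmless $O(h)\|y\|_{L^1}$ term above, and the remainder of the argument is a routine splitting into a ``smooth piece handled in $C^0$'' and a ``Volterra piece handled in $L^1$''.
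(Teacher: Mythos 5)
Your proof is correct, but it takes a genuinely different route from the paper's on the compactness part. The paper simply asserts that $(t,s)\mapsto K^T_{\boldsymbol{a}}(t,s)$ is uniformly continuous on $[0,T]^2$, deduces $K^T_{\boldsymbol{a}}(y)\in C^0_T$ directly, and proves compactness by uniform (Weierstrass) polynomial approximation of the kernel, exhibiting $K^T_{\boldsymbol{a}}$ as an operator-norm limit of finite-rank operators. You instead split off the $k=0$ Volterra piece, treat the remainder by Arzel\`a--Ascoli, and handle the Volterra piece by the Fr\'echet--Kolmogorov criterion in $L^1$. The trade-off is clear: the paper's argument is shorter but hinges on global continuity of $K^T_{\boldsymbol{a}}$ across the diagonal, which holds only because $K_{\boldsymbol{a}}(t,t)=f(\varphi^{\boldsymbol{a}}_{t,t}(0))=f(0)$ vanishes when $f(0)=0$ (as in the examples considered); your decomposition is insensitive to a diagonal jump and so is the more robust argument if $f(0)>0$. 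Conversely, once one grants continuity of $K^T_{\boldsymbol{a}}$ on the whole square, your Volterra/remainder dichotomy and the $L^1$-translate machinery are unnecessary, and a single $C^0$-type argument suffices. Two minor points to tidy up: the $k\geq 1$ summands do touch their diagonal at the corner $(t,s)=(0,T)$ (where $s-T=0=t$), but only from the side $t\geq s-kT$, so continuity of $R_{\boldsymbol{a}}$ on the closed square still holds; and the Fr\'echet--Kolmogorov application should mention uniform boundedness of $\|V(y)\|_{L^1}$ over the unit ball and the extension-by-zero boundary terms near $t=T$, both controlled by $\|K_{\boldsymbol{a}}\|_\infty$.
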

\begin{proof}
	First, the function $[0, T]^2 \ni (t, s) \mapsto K^T_{\boldsymbol{a}}(t, s)$ is 
	(uniformly)
	continuous. Let $\epsilon > 0$, there exists $\eta > 0$ such that
	\[ |t-t'| + |s-s'| \leq \eta \implies |K^T_{\boldsymbol{a}}(t, s) - 
	K^T_{\boldsymbol{a}}(t', s') |
	\leq \epsilon.\]
	It follows that
	\[ \left| K^T_{\boldsymbol{a}} (y)(t) - K^T_{\boldsymbol{a}} (y)(t') \right| \leq 
	\int_0^T{ 
		\left|K^T_{\boldsymbol{a}}(t, s) - K^T_{\boldsymbol{a}}(t', s)\right| \left| y(s) 
		\right| ds } \leq \epsilon 
	||y||_1, \]
	and so the function $K^T_{\boldsymbol{a}} (y)$ is continuous.
	Note that
	\[ \forall s \in [0, T],\quad  K^T_{\boldsymbol{a}}(T, s) = K^T_{\boldsymbol{a}}(0, 
	s),\]
	and so $K^T_{\boldsymbol{a}} (y)$ can be extended to a $T$-periodic 
	function. 
	Altogether, $K^T_{\boldsymbol{a}} (y) \in 
	C^0_T$. To prove that $K^T_{\boldsymbol{a}}$ is a compact operator, we use the  
	Weierstrass 
	approximation Theorem: there exists a sequence of polynomial functions $(t, s) 
	\mapsto P_n(t, s)$ 
	such that $\sup_{t, s \in [0, T]} \left| P_n(t, s) - K^T_{\boldsymbol{a}}(t, s)  \right| 
	\rightarrow_n 0$ 
	as $n \rightarrow \infty$. For each $ n \in \mathbb{N}$, the linear operator 
	$L^1([0, T]) \ni y 
	\mapsto P_n(y) := t \mapsto \int_0^T{P_n(t, s) y(s) ds}$ is of finite-rank. 
	Moreover, the sequence 
	$P_n$ converges to $K^T_{\boldsymbol{a}}$ for the norm operator, and so  
	$K^T_{\boldsymbol{a}}$ 
	is a compact operator (as the limit of finite-rank operators, see \cite[Ch. 
	6]{MR2759829}).
\end{proof}
\begin{lemma}
	\label{lem:space of solution is of dimension 1}
	Let $\boldsymbol{a} \in C^0_T$.
	The Markov Chain $(\phi_i)_{i \geq 0}$ with transition probability kernel 
	$K^T_{\boldsymbol{a}}$ has a unique invariant probability measure 
	$\pi_{\boldsymbol{a}} \in C^0_T$.
	Moreover, the solutions of \eqref{eq:limit equation periodic on [0,T]} in $L^1([0, 
	T])$ 
	span a vector space of dimension $1$.
\end{lemma}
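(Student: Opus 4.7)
My plan is a Perron-Frobenius style argument, leveraging the compactness of $K^T_{\boldsymbol{a}}$ from Lemma~\ref{lem:compact operator} together with strict positivity of its kernel. First I would check that $K^T_{\boldsymbol{a}}(t,s) > 0$ for every $(t,s) \in [0,T]^2$: by \eqref{eq:explicit expression K}, $K_{\boldsymbol{a}}(t,s) = f(\varphi^{\boldsymbol{a}}_{t,s}(0)) H_{\boldsymbol{a}}(t,s)$, and under \eqref{eq:condition on b(0)} the flow $s \mapsto \varphi^{\boldsymbol{a}}_{t,s}(0)$ is strictly decreasing with value $0$ at $s=t$, so $\varphi^{\boldsymbol{a}}_{t,s}(0) > 0$ for $s < t$; since $f$ is strictly increasing with $f(0) \geq 0$, $f(\varphi^{\boldsymbol{a}}_{t,s}(0)) > 0$, and $H_{\boldsymbol{a}}(t,s) > 0$ by \eqref{eq:explicit expression H}. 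Thus already the $k=1$ term $K_{\boldsymbol{a}}(t, s-T)$ in the series defining $K^T_{\boldsymbol{a}}(t,s)$ is strictly positive.

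For existence of a nonzero solution of \eqref{eq:limit equation periodic on [0,T]}, the Markov identity \eqref{eq:identity integral K_T} reads $(K^T_{\boldsymbol{a}})^*\mathbf{1} = \mathbf{1}$ on $L^\infty([0,T])$, so $1$ is an eigenvalue of the adjoint of the compact operator $K^T_{\boldsymbol{a}}$. The Fredholm alternative then gives $\ker(I - K^T_{\boldsymbol{a}}) \neq \{0\}$ and finite-dimensional in $L^1([0,T])$, with elements automatically lying in $C^0_T$ by Lemma~\ref{lem:compact operator}; taking real and imaginary parts yields a nonzero real solution $y$.

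The core step, and what I expect to be the main obstacle, is a sign argument. Assume $y \in C^0_T$ solves $y = K^T_{\boldsymbol{a}} y$ and takes both signs on sets of positive measure. The strict positivity of the kernel then gives the strict pointwise bound
\[ |y(t)| = \Bigl|\int_0^T K^T_{\boldsymbol{a}}(t,s) y(s) \, ds\Bigr| < \int_0^T K^T_{\boldsymbol{a}}(t,s) |y(s)| \, ds \]
for every $t \in [0,T]$; integrating and using Fubini with \eqref{eq:identity integral K_T} produces $\int_0^T |y| < \int_0^T |y|$, a contradiction. So every nonzero real solution has constant sign, and invoking $y = K^T_{\boldsymbol{a}} y$ once more, it is strictly positive (or strictly negative) everywhere.

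For uniqueness, if $y_1, y_2 \in C^0_T$ are two strictly positive solutions, continuity furnishes $c := \min_{t \in [0,T]} y_1(t)/y_2(t) > 0$, attained at some $t_0$. Then $w := y_1 - c y_2 \geq 0$ solves the same equation but vanishes at $t_0$; the sign step just proved rules out any nonzero nonnegative solution that is not strictly positive, so $w \equiv 0$ and $y_1 = c y_2$. Therefore $\ker(I - K^T_{\boldsymbol{a}})$ is one-dimensional, spanned by a strictly positive function; normalizing this function yields the unique invariant probability measure $\pi_{\boldsymbol{a}}$ of the Markov chain, and $\pi_{\boldsymbol{a}} \in C^0_T$ again by Lemma~\ref{lem:compact operator}.
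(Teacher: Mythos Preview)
Your proof is correct and follows essentially the same Perron--Frobenius route as the paper: existence via the Fredholm alternative for the compact operator $K^T_{\boldsymbol{a}}$ (using $(K^T_{\boldsymbol{a}})^* \mathbf{1} = \mathbf{1}$), then strict positivity of the kernel to force constant sign and one-dimensionality. The paper's execution differs only in minor tactics---it uses the uniform lower bound $\delta := \inf_{t,s} K^T_{\boldsymbol{a}}(t,s) > 0$ to obtain a quantitative $L^1$ estimate $\|K^T_{\boldsymbol{a}} x\|_1 \leq \|x\|_1 - 2\delta\min(\|x_+\|_1,\|x_-\|_1)$ rather than integrating your pointwise strict inequality, and deduces uniqueness from the zero-mass difference $\pi_1 - \pi_2$ instead of your ratio argument.
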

	\begin{proof}
		\textit{Step 1: any solution of \eqref{eq:limit equation periodic on [0,T]} has a 
		constant sign}. Let 
		$x 
		\in L^1([0, T])$ be a solution of \eqref{eq:limit equation periodic on [0,T]}.
		Because the kernel $K^T_{\boldsymbol{a}}$ is strictly positive and continuous 
		on $[0, T]^2$, it 
		holds that
		\begin{equation}
			\label{eq:minoration uniforme KT}
			\delta := \inf_{t,s \in [0, T]} K^T_{\boldsymbol{a}}(t, s) > 0.
		\end{equation} 
		We write $x_+$ for the positive part of $x$, $x_-$ for its negative part and 
		define $\beta := \min( 
		||x_+||_1, ||x_-||_1)$. 
		We have 
		$K^T_{\boldsymbol{a}}(x_+)(t) \geq \delta \beta$ and 
		$K^T_{\boldsymbol{a}}(x_-)(t) 
		\geq \delta \beta$. Consequently
		\begin{align*}
			|| K^T_{\boldsymbol{a}} (x) ||_1 =  || K^T_{\boldsymbol{a}} (x_+) -  
			K^T_{\boldsymbol{a}}(x_-) 
			||_1 \leq  ||K^T_{\boldsymbol{a}} (x_+) -   \delta \beta||_1 + 
			||K^T_{\boldsymbol{a}} (x_-) -   \delta \beta||_1 = ||x||_1 - 2 \delta \beta.
		\end{align*}
		To obtain the right-most equality, we used that for all $y \in L^1([0, T])$, $y 
		\geq 0$ yields 
		$||K^T_{\boldsymbol{a}} y||_1 = ||y||_1$.
		But the identity $ K^T_{\boldsymbol{a}} (x) = x$ implies that $\beta = 0$ and so 
		either 
		$x_+$ or $x_-$ is a.e. null.\\
		\textit{Step 2: existence and uniqueness of the invariant probability measure}. 
		Let 
		$(K^T_{\boldsymbol{a}})^\prime: L^\infty([0, T]) \rightarrow L^\infty([0, T]) $ be 
		the dual operator 
		of $K^T_{\boldsymbol{a}}$. We have:
		\[  \forall v \in L^\infty([0, T]),\quad (K^T_{\boldsymbol{a}})^\prime(v) = s 
		\mapsto  \int_0^T{ K^T_{\boldsymbol{a}}(t, s) v(t) dt}. \]
		From \eqref{eq:identity integral K_T}, we deduce that $1$ is an eigenvalue of 
		$(K^T_{\boldsymbol{a}})^\prime$ (its associated eigenvector is 
		$\boldsymbol{1}$, the constant 
		function equal to $1$).
		Denoting by $N$ the null space, the Fredholm alternative \cite[Th. 
		6.6]{MR2759829} yields 
		$\text{dim}~N (I - 
		K^T_{\boldsymbol{a}}) = 
		\text{dim}~N (I - (K^T_{\boldsymbol{a}})^\prime)$. So there exists 
		$\pi_{\boldsymbol{a}} \in L^1([0, 
		T])$ such that:
		\[ \pi_{\boldsymbol{a}} = K^T_{\boldsymbol{a}} (\pi_{\boldsymbol{a}}) ,~  
		||\pi_{\boldsymbol{a}}||_1 = 1. \]
		By Step 1, $\pi_{\boldsymbol{a}}$ can be chosen positive, and by 
		Lemma~\ref{lem:compact operator}, 
		$\pi_{\boldsymbol{a}} \in C^0_T$. Uniqueness follows directly from Step 1: if 
		$\pi_1, \pi_2$ are two invariant probability measures, then $x = \pi_1 - \pi_2$ 
		solves \eqref{eq:limit 
			equation periodic on [0,T]} and so it has a constant sign. Because the mass 
			of $x$ is null, we 
		deduce 
		that $x = 0$. 
		
		By Steps 1 and 2, we deduce that the solutions of \eqref{eq:limit equation 
		periodic on [0,T]} 
		in $L^1([0, T])$ 
		are the $\{\lambda \pi_{\boldsymbol{a}}, ~\lambda \in \mathbb{R} \}$. 
	\end{proof}
	\begin{remark}
		The estimate \eqref{eq:minoration uniforme KT} is a strong version of the 
		Doeblin’s condition. It 
		holds that
		\[ \inf_{s \in [0, T]} K^T_{\boldsymbol{a}}(\cdot, s) \geq \delta T 
		\text{Unif}(\cdot), \]
		where $\text{Unif}$ is the uniform distribution on $[0, T]$. A classical coupling 
		argument shows 
		that for all $i \geq 1$, $||\mathcal{L}(\phi_i) - \pi_{\boldsymbol{a}}||_{\text{TV}} 
		\leq (1 - \delta T)^i$, 
		where $(\phi_i)$ is the Markov Chain defined by \eqref{eq:proba interpretation 
		Markov} and 
		$||\cdot||_{\text{TV}}$ denotes the total 
		variation 
		distance between probability measures. This argument provides an alternative 
		proof of the 
		existence and uniqueness of 
		$\pi_{\boldsymbol{a}}$.
	\end{remark}
We define for all $\theta \in \mathbb{R}$ the following shift operator
\[
\begin{array}{cccc}
	S_\theta: & C^0_T &\rightarrow & C^0_T  \\
	&  x & \mapsto & (x(t + \theta))_{t}.
\end{array}
\]
\begin{corollary}
	Given $\boldsymbol{a} \in C^0_T$, equations~\eqref{eq:asymptotic periodic jump 
		rate} and
	\eqref{eq:asymptotic periodic jump rate normalization condition} have a unique 
	solution
	$\rho_{\boldsymbol{a}} \in C^0_T$. Moreover, it holds that for all $\theta \in 
	\mathbb{R}$,
	\begin{equation}
		\label{eq:rho under translation}
		\rho_{S_\theta
			(\boldsymbol{a})} = S_\theta (\rho_{\boldsymbol{a}}).
	\end{equation}
\end{corollary}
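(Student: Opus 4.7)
The plan is to combine Lemma~\ref{lem:space of solution is of dimension 1} with a normalization argument. First, I would verify that any $T$-periodic $x \in C^0_T$ solves \eqref{eq:asymptotic periodic jump rate} if and only if it solves \eqref{eq:limit equation periodic on [0,T]}. The ``only if'' direction is exactly the computation given just before \eqref{eq:limit equation periodic on [0,T]}: splitting $(-\infty,t]$ into intervals $[-kT,(1-k)T]$ and using the $T$-periodicity of $x$, one gets $x(t)=\int_0^T K^T_{\boldsymbol{a}}(t,s)x(s)\,ds$, with absolute convergence ensured by the exponential bound on $K_{\boldsymbol{a}}$ from Lemma~\ref{lem:estimates on the kernels}. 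Conversely, any solution of \eqref{eq:limit equation periodic on [0,T]}, once extended by $T$-periodicity, satisfies \eqref{eq:asymptotic periodic jump rate} by the reverse re-summation. Hence, by Lemma~\ref{lem:space of solution is of dimension 1}, the set of $T$-periodic solutions of \eqref{eq:asymptotic periodic jump rate} is the one-dimensional line $\{\lambda \pi_{\boldsymbol{a}}:\lambda\in\mathbb{R}\}$.

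Next I would use \eqref{eq:asymptotic periodic jump rate normalization condition} to single out the right $\lambda$. For any solution $\rho=\lambda\pi_{\boldsymbol{a}}$ of \eqref{eq:asymptotic periodic jump rate}, differentiating the function $t\mapsto \int_{-\infty}^t H_{\boldsymbol{a}}(t,s)\rho(s)\,ds$ and using $\partial_t H_{\boldsymbol{a}}(t,s)=-K_{\boldsymbol{a}}(t,s)$ together with $H_{\boldsymbol{a}}(t,t)=1$ shows that this function is constant in $t$. So the constraint \eqref{eq:asymptotic periodic jump rate normalization condition} reduces to the single scalar equation
\[ 1 = \lambda \int_{-\infty}^{0} H_{\boldsymbol{a}}(0,s)\pi_{\boldsymbol{a}}(s)\,ds. \]
The integral is finite by the exponential bound \eqref{eq:210321:H} and strictly positive because $H_{\boldsymbol{a}}>0$ and $\pi_{\boldsymbol{a}}\geq 0$ is non-trivial. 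This uniquely determines $\lambda$ and hence $\rho_{\boldsymbol{a}}$; membership in $C^0_T$ follows from $\pi_{\boldsymbol{a}} \in C^0_T$.

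Finally, for the shift covariance \eqref{eq:rho under translation}, I would use the elementary identity on the flow $\varphi^{S_\theta \boldsymbol{a}}_{t,s}(x)=\varphi^{\boldsymbol{a}}_{t+\theta,s+\theta}(x)$, which follows from uniqueness of the ODE \eqref{eq:definition du flow}. Plugging this into the explicit formulas \eqref{eq:explicit expression H}, \eqref{eq:explicit expression K} yields $K_{S_\theta \boldsymbol{a}}(t,s)=K_{\boldsymbol{a}}(t+\theta,s+\theta)$ and the analogous identity for $H$. Setting $\tilde\rho := S_\theta(\rho_{\boldsymbol{a}})$ and changing variables $u=s+\theta$, one checks directly that
\[ \int_{-\infty}^t K_{S_\theta \boldsymbol{a}}(t,s)\tilde\rho(s)\,ds = \int_{-\infty}^{t+\theta} K_{\boldsymbol{a}}(t+\theta,u)\rho_{\boldsymbol{a}}(u)\,du = \rho_{\boldsymbol{a}}(t+\theta) = \tilde\rho(t), \]
and the same change of variables shows $\tilde\rho$ satisfies \eqref{eq:asymptotic periodic jump rate normalization condition} with $S_\theta \boldsymbol{a}$. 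By the uniqueness established in the previous step, $\tilde\rho = \rho_{S_\theta \boldsymbol{a}}$.

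The only delicate point is the justification of the term-by-term manipulations of the infinite series defining $K^T_{\boldsymbol{a}}$ and of the improper integrals against $H_{\boldsymbol{a}}$ and $K_{\boldsymbol{a}}$, but this is routine once one invokes the uniform exponential decay \eqref{eq:210321:H}--\eqref{eq:210321:K}; no new estimates beyond those already proved are needed.
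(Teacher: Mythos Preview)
Your proposal is correct and follows essentially the same route as the paper: reduce \eqref{eq:asymptotic periodic jump rate} to \eqref{eq:limit equation periodic on [0,T]}, invoke Lemma~\ref{lem:space of solution is of dimension 1} to get the one-dimensional line $\{\lambda\pi_{\boldsymbol{a}}\}$, use \eqref{eq:asymptotic periodic jump rate normalization condition} to fix $\lambda$, and then derive the shift covariance from the flow identity $\varphi^{S_\theta \boldsymbol{a}}_{t,s}(0)=\varphi^{\boldsymbol{a}}_{t+\theta,s+\theta}(0)$ together with uniqueness. Your write-up is slightly more explicit than the paper's about why the normalizing integral is constant in $t$ and strictly positive, but the argument is the same.
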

\begin{proof}
	Note that there is a one to one mapping between 
		periodic solutions of \eqref{eq:asymptotic periodic jump rate} and solutions of 
		\eqref{eq:limit 
			equation periodic on [0,T]}. So by Lemma~\ref{lem:space of solution is of 
			dimension 1}, the 
	solution $\rho_{\boldsymbol{a}}$ of
	equations~\eqref{eq:asymptotic periodic jump rate} and \eqref{eq:asymptotic 
	periodic
		jump rate normalization condition} is $\rho_{\boldsymbol{a}} =  \frac{ 
		\pi_{\boldsymbol{a}} }{ 
		c_{\boldsymbol{a}} }$,
	where $\pi_{\boldsymbol{a}} $ is the invariant measure (on $[0, T]$) of the 
	Markov Chain
	with transition probability kernel $K^T_{\boldsymbol{a}}$ and 
	$c_{\boldsymbol{a}}$ is given by
	\[ c_{\boldsymbol{a}} := \int_{-\infty}^t{ H_{\boldsymbol{a}}(t, s) 
	\pi_{\boldsymbol{a}}(s)
		ds}.  \]
	Note that $c_{\boldsymbol{a}}$ is constant in time. Define for all $t, s \in [0, T]$:
	\[ H^T_{\boldsymbol{a}}(t, s) := \sum_{k \geq 0}{H_{\boldsymbol{a}}(t, s - kT)}. \]
	Using the same notation that in \eqref{eq:notation as a linear 
	operator}, we have 
	$c_{\boldsymbol{a}} 
	= H^T_{\boldsymbol{a}} (\pi_{\boldsymbol{a}})$. Moreover, we have
	\[ \forall t, s, \theta \in \mathbb{R}, \quad \varphi^{S_\theta(\boldsymbol{a})}_{t, 
	s}(0) =
	\varphi^{ \boldsymbol{a}}_{t+\theta,
		s+\theta}(0),  \]
	because both sides satisfy the same ODE with the same initial condition at $t = 
	s$. We
	deduce from \eqref{eq:explicit expression H} and \eqref{eq:explicit expression K} 
	that
	\[ H_{S_\theta (\boldsymbol{a})}(t, s) = H_{\boldsymbol{a}}(t+\theta, 
	s+\theta)\quad \text{ and }
	\quad
	K_{S_\theta (\boldsymbol{a})}(t, s) = K_{\boldsymbol{a}}(t+\theta, s+\theta). \]
	So $S_\theta (\rho_{\boldsymbol{a}})$ solves
	\eqref{eq:asymptotic periodic jump rate} and \eqref{eq:asymptotic periodic jump 
	rate
		normalization condition}, where the kernels are replaced by $K_{S_\theta 
		(\boldsymbol{a})}$
	and $H_{S_\theta (\boldsymbol{a})}$. By uniqueness it follows that $\rho_{S_\theta
		(\boldsymbol{a})} = S_\theta (\rho_{\boldsymbol{a}})$.
\end{proof}
\begin{remark}
	Using that $\int_0^T{ \pi_{\boldsymbol{a}}(s) ds} = 1$, we find that the average 
	number of spikes over one period $[0, T]$ is given by
	\begin{align*}
		\frac{1}{T} \int_0^T{ \rho_{\boldsymbol{a}}(s) ds} &= \frac{1}{c_{\boldsymbol{a}} 
		T}.
	\end{align*}
	The probabilistic interpretation of $c_{\boldsymbol{a}}$ is the following: 
	remembering the Markov chain defined by \eqref{eq:proba interpretation Markov}, 
	we have
	\[ \mathbb{P}( \Delta_{i+1} > k | \phi_i) = H_{\boldsymbol{a}}((k+1)T, \phi_i), \]
	and so, if $\mathcal{L}(\phi_i) = \pi_{\boldsymbol{a}}$, we deduce that
	\[
	\E \Delta_{i+1} = \E \E (\Delta_{i+1} | \phi_i) = \E \left[ \sum_{k \geq 0} 
	\mathbb{P}(\Delta_{i+1} > k | \phi_i) \right] = H^T_{\boldsymbol{a}} 
	(\pi_{\boldsymbol{a}}) = c_{\boldsymbol{a}}.
	\]
	In other words, $c_{\boldsymbol{a}}$ is the expected number of ``revolutions'' 
	between two 
	successive spikes, assuming the phase of each spike follows its 
	invariant 
	measure $\pi_{\boldsymbol{a}}$. We shall see in Proposition~\ref{prop:c alpha +h 
	= c alpha} that 
	$c_{\boldsymbol{a}}$ only depends on the \textit{mean} of $\boldsymbol{a}$. 
	Furthermore, it holds 
	that for $\boldsymbol{a} \equiv \alpha > 0$
	\[ c_\alpha = H^T_\alpha(1/T) = \frac{1}{T} \int_{0}^\infty{ H_\alpha(t) dt} = 
	\frac{1}{T 
		\gamma(\alpha)},  \]
	and so for all $t$, $\rho_\alpha(t) = \gamma(\alpha)$.
\end{remark}
\subsection{Shape of the solutions}
\label{sec:periodic densities}
Let $\boldsymbol{a} \in C^0_T$ such that \eqref{eq:condition on b(0)} holds.
Let $\sigma_{\boldsymbol{a}}(t)$ be defined by \eqref{def:sigma_a for a periodic.}, 
such that $s \mapsto \varphi^{\boldsymbol{a}}_{t, s}(0)$ is a bijection from $(-\infty, 
t]$ to $[0, \sigma_{\boldsymbol{a}}(t))$.
We denote by $x \mapsto \beta^{\boldsymbol{a}}_t(x)$ its inverse.
Note that $t \mapsto \sigma_{\boldsymbol{a}}(t)$ is $T$-periodic and
\[ \forall t \in \mathbb{R}, \forall x \in [0, \sigma_{\boldsymbol{a}}(t)), \quad 
\beta^{\boldsymbol{a}}_{t+T}(x) = \beta^{\boldsymbol{a}}_t(x) + T.\]
Using that $\varphi^{\boldsymbol{a}}_{t, t}(0) = 0$, we have 
$\beta^{\boldsymbol{a}}_t(0) = t$.
\begin{notation}
	Given $\boldsymbol{a} \in C^0_T$, we define for all $t \in \mathbb{R}$
	\begin{equation}
		\label{eq:definition de nu infty a for periodic a}
		\tilde{\nu}_{\boldsymbol{a}} (t, x) :=  \frac{\rho_{\boldsymbol{a}}( 
		\beta^{\boldsymbol{a}}_t(x) )}{b(0) 
			+ a( \beta^{\boldsymbol{a}}_t(x) ) } \EXP{-\int_{\beta^{\boldsymbol{a}}_t(x)}^t{ 
				(f+b')(\varphi^{\boldsymbol{a}}_{\theta, \beta^{\boldsymbol{a}}_t(x)}(0)) 
				d\theta }} \indica{[0, 
			\sigma_{\boldsymbol{a}}(t) )}(x),
	\end{equation}
	where $\rho_{\boldsymbol{a}}$ is the unique solution of the equations 
	\eqref{eq:asymptotic periodic jump rate} and \eqref{eq:asymptotic periodic jump 
	rate normalization condition}.
\end{notation}
By the change of variables $u = \beta^{\boldsymbol{a}}_t(x)$, one obtains that for 
any non-negative 
measurable test function $g$
\begin{equation}
	\int_0^\infty{ g(x) \tilde{\nu}_{\boldsymbol{a}} (t, x) dx } = \int_{-\infty}^t{ 
		g(\varphi^{\boldsymbol{a}}_{t, u}(0)) \rho_{\boldsymbol{a}}(u)  
		H_{\boldsymbol{a}}(t, u) du}.
	\label{eq:link test function flow nu infty a}
\end{equation}
Note moreover that when  $\boldsymbol{a}$ is constant and equal to 
$\alpha > 0$ 
($\boldsymbol{a} \equiv \alpha$), \eqref{eq:definition de nu infty a for periodic a} 
matches with the 
definition of the invariant measure $\nu^\infty_\alpha$ given by \eqref{eq:the 
invariant measure for 
	alpha}:
\[ \forall t \in \mathbb{R},\quad \sigma_\alpha(t) = \sigma_\alpha \quad \text{ and } 
\quad  \tilde{\nu}_\alpha(t) = \nu^\infty_\alpha. \]
The main result of this section is
\begin{proposition}
	\label{prop:the periodic solution of non homogenous current}
	Let $\boldsymbol{a} \in C^0_T$ such that $\inf_{t \in \mathbb{R}} a_t > -b(0)$. It 
	holds that 
	$(\tilde{\nu}_{\boldsymbol{a}}(t, \cdot))_t$ is the unique $T$-periodic solution of 
	\eqref{non-homegeneous-SDE}.
\end{proposition}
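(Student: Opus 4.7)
The plan is to verify in turn that $\tilde{\nu}_{\boldsymbol{a}}(t,\cdot)$ is a probability density, that the family is $T$-periodic, that it is invariant under the dynamics of \eqref{non-homegeneous-SDE}, and finally that it is the only such $T$-periodic family. The hypothesis $\inf_t a_t > -b(0)$ ensures via \eqref{eq: d/ds du flow} that $s \mapsto \varphi^{\boldsymbol{a}}_{t,s}(0)$ is a strictly decreasing bijection onto $[0,\sigma_{\boldsymbol{a}}(t))$, so that $\beta^{\boldsymbol{a}}_t$ and hence $\tilde{\nu}_{\boldsymbol{a}}(t,\cdot)$ are well defined. Using the change of variables $u = \beta^{\boldsymbol{a}}_t(x)$ as in \eqref{eq:link test function flow nu infty a}, the total mass of $\tilde{\nu}_{\boldsymbol{a}}(t,\cdot)$ equals $\int_{-\infty}^t H_{\boldsymbol{a}}(t,u)\rho_{\boldsymbol{a}}(u)\,du$, which is $1$ by \eqref{eq:asymptotic periodic jump rate normalization condition}. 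Periodicity $\tilde{\nu}_{\boldsymbol{a}}(t+T,\cdot) = \tilde{\nu}_{\boldsymbol{a}}(t,\cdot)$ is then immediate from $\rho_{\boldsymbol{a}} \in C^0_T$, the relations $\sigma_{\boldsymbol{a}}(t+T)=\sigma_{\boldsymbol{a}}(t)$, $\beta^{\boldsymbol{a}}_{t+T}(x) = \beta^{\boldsymbol{a}}_t(x)+T$, and the flow identity $\varphi^{\boldsymbol{a}}_{\theta+T, u+T}(0) = \varphi^{\boldsymbol{a}}_{\theta,u}(0)$.

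The heart of the argument is to show that if the initial law at time $s$ is $\tilde{\nu}_s := \tilde{\nu}_{\boldsymbol{a}}(s,\cdot)$, then $r^{\tilde{\nu}_s}_{\boldsymbol{a}}(t,s) = \rho_{\boldsymbol{a}}(t)$ for all $t\geq s$. Inserting the representation \eqref{eq:link test function flow nu infty a} into the explicit formula \eqref{eq:explicit expression K} for $K^{\tilde{\nu}_s}_{\boldsymbol{a}}$, then using the semi-group property $\varphi^{\boldsymbol{a}}_{t,s}\circ\varphi^{\boldsymbol{a}}_{s,u}(0) = \varphi^{\boldsymbol{a}}_{t,u}(0)$ together with the factorization $K_{\boldsymbol{a}}(t,u) = f(\varphi^{\boldsymbol{a}}_{t,u}(0)) H_{\boldsymbol{a}}(t,u)$, I obtain
\[ K^{\tilde{\nu}_s}_{\boldsymbol{a}}(t,s) \;=\; \int_{-\infty}^s K_{\boldsymbol{a}}(t,u)\,\rho_{\boldsymbol{a}}(u)\,du. \]
Splitting the integral in \eqref{eq:asymptotic periodic jump rate} at $u=s$ then shows that $t \mapsto \rho_{\boldsymbol{a}}(t)$ solves the Volterra equation \eqref{eq:the volterra integral equation} with initial law $\tilde{\nu}_s$; by uniqueness for the Volterra equation, $r^{\tilde{\nu}_s}_{\boldsymbol{a}}(\cdot,s) \equiv \rho_{\boldsymbol{a}}$.

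Equipped with this identity, I would compute the law of $Y^{\boldsymbol{a},\tilde{\nu}_s}_t$ by conditioning on the time of the last spike before $t$. Either no spike has occurred since $s$, contributing $\mathbb{E}\!\left[g(\varphi^{\boldsymbol{a}}_{t,s}(Y_s))\exp\!\big(-\!\int_s^t\!f(\varphi^{\boldsymbol{a}}_{\theta,s}(Y_s))\,d\theta\big)\right]$, or the last spike occurred at some $u \in [s,t]$ with density $r^{\tilde{\nu}_s}_{\boldsymbol{a}}(u,s) = \rho_{\boldsymbol{a}}(u)$ and survival $H_{\boldsymbol{a}}(t,u)$. Expanding the first term through \eqref{eq:link test function flow nu infty a} and flow composition yields $\int_{-\infty}^s g(\varphi^{\boldsymbol{a}}_{t,u}(0))\rho_{\boldsymbol{a}}(u)H_{\boldsymbol{a}}(t,u)\,du$, and summing with the second term reconstructs $\int g(x)\tilde{\nu}_{\boldsymbol{a}}(t,x)\,dx$ for every bounded continuous $g$. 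Hence $\mathcal{L}(Y^{\boldsymbol{a},\tilde{\nu}_s}_t) = \tilde{\nu}_{\boldsymbol{a}}(t,\cdot)$.

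Uniqueness is then handled in the same Volterra framework. If $(\mu(t))_t$ is another $T$-periodic solution, its spiking rate $r(t) := \int f\,d\mu(t)$ satisfies $r = K^{\mu(s)}_{\boldsymbol{a}}(\cdot,s) + K_{\boldsymbol{a}} \ast r$ on $[s,\infty)$. Using the exponential decay of the kernel from Lemma~\ref{lem:estimates on the kernels} to send $s \to -\infty$, the $T$-periodic $r$ must satisfy \eqref{eq:asymptotic periodic jump rate}, while \eqref{eq:the time of the last spike density sum to 1} provides the normalization \eqref{eq:asymptotic periodic jump rate normalization condition}. Lemma~\ref{lem:space of solution is of dimension 1} forces $r = \rho_{\boldsymbol{a}}$, and the conditioning argument above, applied with $\mu(s)$ as initial law, gives $\mu(t) = \tilde{\nu}_{\boldsymbol{a}}(t,\cdot)$. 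The main obstacle is the self-consistency identification $r^{\tilde{\nu}_s}_{\boldsymbol{a}} \equiv \rho_{\boldsymbol{a}}$; once this is established, the rest is bookkeeping using the explicit kernels.
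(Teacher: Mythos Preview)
Your proposal is correct and follows essentially the same route as the paper: the key identity $K^{\tilde{\nu}_s}_{\boldsymbol{a}}(t,s) = \int_{-\infty}^s K_{\boldsymbol{a}}(t,u)\rho_{\boldsymbol{a}}(u)\,du$, the Volterra-uniqueness argument yielding $r^{\tilde{\nu}_s}_{\boldsymbol{a}} \equiv \rho_{\boldsymbol{a}}$, the last-spike conditioning to recover the law, and the limit $s\to -\infty$ (the paper takes $s=-kT$, $k\to\infty$, using periodicity) for uniqueness are all the same ingredients in the same order. The only additions are your explicit checks of total mass and $T$-periodicity, which the paper leaves implicit.
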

\begin{proof} \textbf{Existence}.
	We first prove that $\tilde{\nu}_{\boldsymbol{a}}(t, \cdot)$ is indeed a 
	$T$-periodic solution. We 
	follow the same strategy that \cite[Prop.~26]{CTV}.
	First note that, by \eqref{eq:link test function flow nu infty a}, one has
	\[ \int_0^\infty{ f(x) \tilde{\nu}_{\boldsymbol{a}}(t, x)dx}  = \int_{-\infty}^t{ 
	K_{\boldsymbol{a}}(t, u) 
		\rho_{\boldsymbol{a}}(u) du} = \rho_{\boldsymbol{a}}(t).\]
	Consider the solution $(Y^{\boldsymbol{a}, \tilde{\nu}_{\boldsymbol{a}}(0)}_{t, 
	0})$ of 
	\eqref{non-homegeneous-SDE} starting with law $\tilde{\nu}_{\boldsymbol{a}}(0)$ 
	at time $t = 0$ and let 
	$r^{\tilde{\nu}_{\boldsymbol{a}}(0)}_{\boldsymbol{a}} (t) = \E f(Y^{\boldsymbol{a}, 
		\tilde{\nu}_{\boldsymbol{a}}(0)}_{t, 0})$. \\
	\textbf{Claim}: It holds that for all $t \geq 0$, 
	$r^{\tilde{\nu}_{\boldsymbol{a}}(0)}_{\boldsymbol{a}} (t) = 
	\rho_{\boldsymbol{a}}(t)$.
	\begin{proof}[Proof of the Claim.]
		Recall that $r^{\tilde{\nu}_{\boldsymbol{a}}(0)}_{\boldsymbol{a}} (t)$ is the 
		unique solution of the Volterra equation
		\[ r^{\tilde{\nu}_{\boldsymbol{a}}(0)}_{\boldsymbol{a}} = 
		K^{\tilde{\nu}_{\boldsymbol{a}}(0)}_{\boldsymbol{a}} + K_{\boldsymbol{a}} * 
		r^{\tilde{\nu}_{\boldsymbol{a}}(0)}_{\boldsymbol{a}}. \]
		So, to prove the claim it suffices to show that 
		$\rho_{\boldsymbol{a}}$ also solves this equation.
		For all $u \leq 0 \leq t$, one has
		\[  K^{\varphi^{\boldsymbol{a}}_{0, u}(0)}_{\boldsymbol{a}}(t, 0) 
		H_{\boldsymbol{a}}(0, u) = K_{\boldsymbol{a}}(t, u). \]
		Consequently, we deduce from~\eqref{eq:link test function flow nu infty a} that
		\begin{align*}
			K^{\tilde{\nu}_{\boldsymbol{a}}(0)}_{\boldsymbol{a}}(t, 0) = \int_{-\infty}^{0}{ 
			K_{\boldsymbol{a}}(t, u) \rho_{\boldsymbol{a}}(u) du}.
		\end{align*}
		So
		\[ \rho_{\boldsymbol{a}}(t) = \int_{-\infty}^t{ K_{\boldsymbol{a}}(t, u) 
		\rho_{\boldsymbol{a}}(u) du} = 
		K^{\tilde{\nu}_{\boldsymbol{a}}(0)}_{\boldsymbol{a}}(t, 0)
		+ \int_0^t{ K_{\boldsymbol{a}}(t, u) \rho_{\boldsymbol{a}}(u) du}, \]
		and the conclusion follows.
	\end{proof}
	Finally, using \cite[Prop.~19]{CTV} and the claim, we deduce that for any 
	non-negative measurable 
	function $g$
	\[
	\E g(Y^{\boldsymbol{a}, \tilde{\nu}_{\boldsymbol{a}}(0)}_{t, 0}) = \int_0^t{ 
	g(\varphi^{\boldsymbol{a}}_{t, u}(0)) H_{\boldsymbol{a}}(t, u) 
	\rho_{\boldsymbol{a}} (u) du}  + \int_0^\infty{ g(\varphi^{\boldsymbol{a}}_{t, 0}(x)) 
	H^x_{\boldsymbol{a}}(t, 0) \tilde{\nu}_{\boldsymbol{a}}(0, x) dx}.
	\]
	By \eqref{eq:link test function flow nu infty a} (with $t = 0$ and $g$ replaced by $x \mapsto
		g(\varphi^{\boldsymbol{a}}_{t, 0}(x)) H^x_{\boldsymbol{a}}(t, 0)$),
		 the second 
		term is equal to
	\[ \int_{-\infty}^0{g(\varphi^{\boldsymbol{a}}_{t, u}(0))  H_{\boldsymbol{a}}(t, u) 
		\rho_{\boldsymbol{a}}(u) du}, \]
	and so
	\[ \forall t \geq 0,\quad \E g(Y^{\boldsymbol{a}, \tilde{\nu}_{\boldsymbol{a}}(0)}_{t, 
	0}) = \int_{-\infty}^t{ g(\varphi^{\boldsymbol{a}}_{t, u}(0)) H_{\boldsymbol{a}}(t, u) 
	\rho_{\boldsymbol{a}} (u) du} \overset{\eqref{eq:link test function flow nu infty 
	a}}{=} \int_0^\infty{ g(x) \tilde{\nu}_{\boldsymbol{a}}(t, x) dx}. \]
	This ends the proof of the existence.
	
	\noindent \textbf{Uniqueness}. Consider $(\nu(t))_{t \in [0, T]}$ a $T$-periodic 
	solution of \eqref{non-homegeneous-SDE} and define $\rho(t) = \E 
	f(Y^{\boldsymbol{a}, \nu(0)}_{t, 0})$.
	The function $\rho$ is $T$-periodic. Moreover, it holds that for all $k \geq 0$, 
	$\rho(t) = \E f(Y^{\boldsymbol{a}, \nu(0)}_{t, -kT})$ and so \eqref{eq:the volterra 
	integral equation} and \eqref{eq:the time of the last spike density sum to 1} yields
	\begin{align*}
		\rho(t) &= K_{\boldsymbol{a}}^{\nu(0)}(t, -kT) + \int_{-kT}^t{ 
		K_{\boldsymbol{a}}(t, u) \rho(u) du} \\
		1 &=  H^{\nu(0)}_{\boldsymbol{a}}(t, -kT) + \int_{-kT}^t{ H_{\boldsymbol{a}}(t, 
		u) \rho(u) du}.
	\end{align*}
	Letting $k$ goes to infinity, we deduce that $\rho$ solves \eqref{eq:asymptotic 
	periodic jump rate} and \eqref{eq:asymptotic periodic jump rate normalization 
	condition}. By uniqueness, we deduce that for all $t$, $\rho(t) = 
	\rho_{\boldsymbol{a}}(t)$ (and so $\rho$ is continuous). Finally define $\tau_t$ 
	the time of the last spike of $Y^{\boldsymbol{a}, \nu(0)}_{t, -kT}$ before $t$ (with 
	the convention that $\tau_t = -kT$ if there is no spike between $-kT$ and $t$). 
	The law of $\tau_t$ is
	\[ \mathcal{L}(\tau_t) (du) = \delta_{-kT}(du) H^{\nu(0)}_{\boldsymbol{a}}(t, -kT) + 
	\rho_{\boldsymbol{a}}(u) H_{\boldsymbol{a}}(t, u) du. \]
	Consequently, for any non-negative test function $g$:
	\begin{align*}
		\E g(Y^{\boldsymbol{a}, \nu(0)}_{t, -kT}) &= \E g(Y^{\boldsymbol{a}, \nu(0)}_{t, 
		-kT} \indica{\tau_t = -kT}) +  \E g(\varphi^{\boldsymbol{a}}_{t, \tau_t}(0)) 
		\indica{\tau_t \in (-kT, t] } \\
		&= \int_0^\infty{ g(\varphi^{\boldsymbol{a}}_{t, -kT}(x)) H^x_{\boldsymbol{a}}(t, 
		-kT)\nu(0) (dx) } + 
		\int_{-kT}^t{g(\varphi^{\boldsymbol{a}}_{t, u}(0))  \rho_{\boldsymbol{a}}(u) 
		H_{\boldsymbol{a}}(t, u)   
			du}.
	\end{align*}
	Using that $\E g(Y^{\boldsymbol{a}, \nu(0)}_{t, -kT}) = \E g(Y^{\boldsymbol{a}, 
	\nu(0)}_{t,0})$ and letting again $k$ to infinity we deduce that
	\[ \E g(Y^{\boldsymbol{a}, \nu(0)}_{t, 0}) = 
	\int_{-\infty}^t{g(\varphi^{\boldsymbol{a}}_{t, u}(0)) \rho_{\boldsymbol{a}}(u) 
	H_{\boldsymbol{a}}(t, u) du}. \]
	So for all $t$, $\nu(t) \equiv \tilde{\nu}_{\boldsymbol{a}}(t)$.
\end{proof}

\subsection{Reduction to \texorpdfstring{$2\pi$}{2pi}-periodic functions}
\label{sec:reduction 2pi}
\textbf{Convention}: For now on, we  prefer to work with the \textit{reduced period} 
$\tau$, such that
\[ T =: 2 \pi \tau, \quad \tau > 0. \]
Consider $\boldsymbol{d} \in C^0_{2 \pi \tau}$ and let $\boldsymbol{a}$ be the 
$2\pi$-periodic function defined by:
\[ \forall t \in \mathbb{R}, \quad a(t) := d(\tau t). \]
We define
\[ \forall t \in \mathbb{R}, \quad \rho_{\boldsymbol{a}, \tau} (t) :=
\rho_{\boldsymbol{d}}(\tau t), \]
where $\rho_{\boldsymbol{d}}$ is the unique solution of \eqref{eq:asymptotic 
periodic
	jump rate} and \eqref{eq:asymptotic periodic jump rate normalization condition}
(with kernels \(\K[][\boldsymbol{d}]\) and \(\H[][\boldsymbol{d}]\)).
Because $\rho_{\boldsymbol{d}}$ is $2\pi \tau$-periodic, $\rho_{\boldsymbol{a}, 
\tau}$ is
$2 \pi$-periodic.
Note that when $\boldsymbol{a} \equiv \alpha$ is constant we have
\begin{equation} 
	\label{eq:rho alpha tau is gamma alpha}
	\forall \tau > 0, \forall t \in \mathbb{R}, \quad \rho_{\alpha, \tau}(t) = 
	\gamma(\alpha). 
\end{equation}
To better understand how $\rho_{\boldsymbol{a}, \tau}$ depends on
$\tau$, consider $(Y^{\boldsymbol{d}, \nu}_{t, s})$ the solution of 
\eqref{non-homegeneous-SDE}, 
starting with law $\nu$ and driven by $\boldsymbol{d}$. Note that for all $t \geq s$
\begin{align*}
	Y^{\boldsymbol{d}, \nu}_{\tau t, \tau s}  &= Y^{\boldsymbol{d}, \nu}_{\tau s, \tau s} 
	+ \int_{\tau 
		s}^{\tau t}{ b(Y^{\boldsymbol{d}, \nu}_{u, \tau s}) du } + \int_{\tau s}^{\tau t}{d_u 
		du} - \int_{\tau 
		s}^{\tau t}{ \int_{\mathbb{R}_+}{ Y^{\boldsymbol{d}, \nu}_{u-, \tau s} \indic{ \tau 
		z \leq \tau 
				f(Y^{\boldsymbol{d}, \nu}_{u-, \tau s})} \PM(du, dz)}} \\
	&= Y^{\boldsymbol{d}, \nu}_{\tau s, \tau s} + \int_{s}^{t}{\tau  b(Y^{\boldsymbol{d}, 
	\nu}_{\tau u, \tau 
			s}) du } + \int_{ s}^{t}{\tau a_u du} - \int_{s}^{t}{ \int_{\mathbb{R}_+}{ 
			Y^{\boldsymbol{d}, \nu}_{\tau 
				u-, \tau s} \indic{ z \leq \tau 
				f(Y^{\boldsymbol{d}, \nu}_{\tau u-, \tau s})} \tilde{\PM}(du, dz)}}. 
\end{align*}
Here, $\tilde{\PM} := g_* \PM$ is the push-forward measure of $\PM$ by the function
\[ g(t,z) := (\tau t, z / \tau). \]
Note that $\tilde{\PM}(du, dz)$ is again a Poisson measure of intensity $du dz$, and 
so 
$(Y^{\boldsymbol{d}, \nu}_{\tau t, \tau s})$ is a (weak) solution of 
\eqref{non-homegeneous-SDE} for $\tilde{f} := \tau f$, $\tilde{b} := \tau b$ and 
$\tilde{\boldsymbol{a}} := \tau \boldsymbol{a}$. So, in particular (taking $\nu = 
\delta_0$), if we 
define:
\begin{align}
	\frac{d}{dt} \Flow[\boldsymbol{a}, \tau][t, s](0) &= \tau b(\Flow[\boldsymbol{a}, 
	\tau][t,
	s](0))
	+
	\tau a(t) ; \quad  \Flow[\boldsymbol{a}, \tau][s, s](0) = 0, \nonumber \\
	H_{\boldsymbol{a},\tau}(t, s) &:= \EXP{-\int_s^t{ \tau f(\Flow[\boldsymbol{a}, 
	\tau][u,
			s](0))
			du}},  \nonumber \\
	K_{\boldsymbol{a},\tau}(t, s) &:=  \tau f(\Flow[\boldsymbol{a}, \tau][t, s](0))
	\EXP{-\int_s^t{ \tau f(\Flow[\boldsymbol{a}, \tau][u, s](0)) du}}, \label{eq:K alpha 
	tau}
\end{align}
we have
\begin{lemma}
	\label{lem:link between H et tilde H and so on}
	Let $\tau > 0$ and $\boldsymbol{a} \in C^0_{2 \pi}$. Set, for all $t \in \mathbb{R}$, 
	$d(t) := a( 
	\tfrac{t}{ \tau})$. Then it holds that 
	\[ \forall t \geq s, \quad H_{\boldsymbol{a},\tau}(t, s) = 
	H_{\boldsymbol{d}}(\tau t, \tau s) \quad \text{ and } \quad 
	K_{\boldsymbol{a},\tau}(t, s) = \tau 
	K_{\boldsymbol{d}}(\tau 
	t, \tau s). \]
\end{lemma}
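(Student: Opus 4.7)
The plan is to reduce both identities to the analogous scaling identity for the flow, namely
\[ \Flow[\boldsymbol{a},\tau][t,s](0) = \varphi^{\boldsymbol{d}}_{\tau t, \tau s}(0), \]
and then plug this into the explicit expressions \eqref{eq:explicit expression H}--\eqref{eq:explicit expression K} for $H$ and $K$ (specialized to $\nu = \delta_0$).

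First, I would establish the flow identity by a uniqueness argument for ODEs. The left-hand side, by definition, satisfies $\frac{d}{dt}\Flow[\boldsymbol{a},\tau][t,s](0) = \tau b(\Flow[\boldsymbol{a},\tau][t,s](0)) + \tau a(t)$ with initial condition $0$ at $t=s$. For the right-hand side, set $\psi(t) := \varphi^{\boldsymbol{d}}_{\tau t, \tau s}(0)$. A direct differentiation using the chain rule and the definition \eqref{eq:definition du flow} of the flow driven by $\boldsymbol{d}$ gives
\[ \psi'(t) = \tau \bigl[ b(\psi(t)) + d(\tau t) \bigr] = \tau b(\psi(t)) + \tau a(t), \]
since $d(\tau t) = a(t)$ by definition of $\boldsymbol{d}$. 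Since $\psi(s) = \varphi^{\boldsymbol{d}}_{\tau s, \tau s}(0) = 0$, both functions solve the same Cauchy problem and Assumption~\ref{as:linear drift} ensures Lipschitz continuity of $b$, so they coincide.

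Next I would compute $H_{\boldsymbol{a},\tau}(t,s)$ directly. Substituting the flow identity,
\[ \int_s^t \tau f\bigl(\Flow[\boldsymbol{a},\tau][u,s](0)\bigr)\, du = \int_s^t \tau f\bigl(\varphi^{\boldsymbol{d}}_{\tau u, \tau s}(0)\bigr)\, du = \int_{\tau s}^{\tau t} f\bigl(\varphi^{\boldsymbol{d}}_{v, \tau s}(0)\bigr)\, dv, \]
by the change of variable $v = \tau u$. Exponentiating and using the explicit formula \eqref{eq:explicit expression H} yields $H_{\boldsymbol{a},\tau}(t,s) = H_{\boldsymbol{d}}(\tau t, \tau s)$. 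The identity for $K$ then follows immediately: by definition \eqref{eq:K alpha tau},
\[ K_{\boldsymbol{a},\tau}(t,s) = \tau f\bigl(\Flow[\boldsymbol{a},\tau][t,s](0)\bigr)\, H_{\boldsymbol{a},\tau}(t,s) = \tau f\bigl(\varphi^{\boldsymbol{d}}_{\tau t, \tau s}(0)\bigr)\, H_{\boldsymbol{d}}(\tau t, \tau s) = \tau K_{\boldsymbol{d}}(\tau t, \tau s). \]

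There is no real obstacle here; the lemma is essentially a bookkeeping statement about time rescaling. The only point requiring minor care is correctly applying the chain rule when differentiating $t \mapsto \varphi^{\boldsymbol{d}}_{\tau t, \tau s}(0)$, keeping in mind that only the first argument of the flow varies in $t$ (the initial time $\tau s$ is frozen), and tracking the factor of $\tau$ that appears both in the chain rule and in the change of variable in the integral defining $H$.
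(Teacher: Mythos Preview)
Your proof is correct. It differs from the paper's approach, which is worth noting.

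The paper derives the lemma as a byproduct of a probabilistic time-change argument: it shows, via the pushforward $\tilde{\PM} = g_*\PM$ of the Poisson measure under $g(t,z) = (\tau t, z/\tau)$, that the rescaled process $(Y^{\boldsymbol{d},\nu}_{\tau t, \tau s})$ is a weak solution of \eqref{non-homegeneous-SDE} with $\tilde f = \tau f$, $\tilde b = \tau b$, $\tilde{\boldsymbol{a}} = \tau \boldsymbol{a}$. The identities for $H$ and $K$ then follow because these kernels are the survival function and first-jump density of that process. Your route is purely deterministic: you prove the scaling identity for the flow by ODE uniqueness and then substitute into the explicit formulas \eqref{eq:explicit expression H}--\eqref{eq:explicit expression K}. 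This is more elementary and self-contained for the lemma at hand, requiring no stochastic machinery. The paper's argument, on the other hand, yields the stronger conceptual statement that the entire jump process rescales, of which the kernel identities are a corollary.
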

In view of this result, we deduce that $\rho_{\boldsymbol{a}, \tau}$ solves
\begin{equation} \rho_{\boldsymbol{a}, \tau}(t) = \int_{-\infty}^t{ 
K_{\boldsymbol{a},\tau} (t, s)  \rho_{\boldsymbol{a}, \tau}(s) ds}, \quad 1 = \tau 
\int_{-\infty}^t{ H_{\boldsymbol{a},\tau} (t, s)  \rho_{\boldsymbol{a}, \tau}(s) ds},
	\label{eq:r tilde of beta and a}
\end{equation}
or equivalently, setting
\begin{equation} \forall t,s \in [0, 2\pi], \quad K^{2 \pi}_{\boldsymbol{a},\tau} (t, s) := 
\sum_{k \geq
		0}
	K_{\boldsymbol{a},\tau}(t, s- 2 \pi k) \quad \text{ and } \quad   H^{2
		\pi}_{\boldsymbol{a},\tau} (t, s) :=
	\sum_{k \geq 0}  H_{\boldsymbol{a},\tau}(t, s- 2 \pi k),
	\label{eq:definition of H 2pi a tau}
\end{equation}
one has, using the same operator notation as in \eqref{eq:notation as a linear 
operator}
\[  \rho_{\boldsymbol{a}, \tau} = K^{2 \pi}_{\boldsymbol{a},\tau} ( 
\rho_{\boldsymbol{a}, 
	\tau} ) ,\quad 1 = \tau H^{2 \pi}_{\boldsymbol{a},\tau} (\rho_{\boldsymbol{a}, \tau}). 
	\]
Note that $\rho_{\cdot, \tau}$ and $\rho_{\cdot}$ are linked by \eqref{eq:link rho a 
and rho 
	a 
	tau}.
Consequently equations \eqref{eq:r tilde of beta and a} define a unique $2 
\pi$-periodic 
continuous function
\begin{equation}
	\label{eq:link between tilde phi and tilde r}
	\rho_{\boldsymbol{a}, \tau} = \frac{ \pi_{\boldsymbol{a}, \tau}}{ c_{\boldsymbol{a}, 
	\tau}},
\end{equation}
where $\pi_{\boldsymbol{a}, \tau}$ is the unique invariant measure of the Markov 
Chain
with transition probability kernel $K^{2 \pi}_{\boldsymbol{a},\tau}$ and 
$c_{\boldsymbol{a},
	\tau}$ is the constant given by \[ c_{\boldsymbol{a}, \tau} := \tau H^{2
	\pi}_{\boldsymbol{a},\tau} (\pi_{\boldsymbol{a}, \tau}). \]

\subsection{Regularity of \texorpdfstring{$\rho$}{p}}
\label{sec:regularity of rho}
The goal of this section is to study the regularity of $\rho_{\boldsymbol{a}, \tau}$ 
with
respect to $\boldsymbol{a}$ and $\tau$. For $\eta_0 > 0$, recall that $B^{2 
\pi}_{\eta_0}$ is the 
open ball of $C^0_{2 \pi}$ defined by \eqref{eq:open ball of C_T}.
The main result of this section is
\begin{proposition}
	\label{prop:regularity of rho}
	Grant Assumptions~\ref{as:linear drift},
	\ref{as:hyp on f} and let $\alpha_0 > 0$ such that Assumption~\ref{hyp:alpha is 
	non
		degenerate} holds. Let $\tau_0 > 0$. There exists $\epsilon_0,\eta_0 > 0$ small 
		enough
	(only
	depending on
	$b$, $f$, $\alpha_0$ and $\tau_0$) such that the function
	\[
	\begin{array}{rcl}
		B^{2 \pi}_{\eta_0}(\alpha_0) \times (\tau_0 - \epsilon_0, \tau_0 + \epsilon_0) & 
		\rightarrow &
		C^0_{2
			\pi}  \\
		(\boldsymbol{a}, \tau) & \mapsto &  \rho_{\boldsymbol{a}, \tau}
	\end{array}
	\]
	is $\mathcal{C}^2$ Fréchet differentiable.
\end{proposition}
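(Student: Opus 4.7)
The strategy is to exploit the factorization $\rho_{\boldsymbol{a},\tau} = \pi_{\boldsymbol{a},\tau}/c_{\boldsymbol{a},\tau}$ from \eqref{eq:link between tilde phi and tilde r} and to establish $\mathcal{C}^2$-smoothness of $\pi_{\boldsymbol{a},\tau}$ via the implicit function theorem in the Banach space $E := C^0_{2\pi}$, with $c_{\boldsymbol{a},\tau}$ a scalar of the same regularity, bounded away from zero near the reference point.

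First, I would establish $\mathcal{C}^2$-smoothness of the operator-valued maps $(\boldsymbol{a},\tau) \mapsto K^{2\pi}_{\boldsymbol{a},\tau},\, H^{2\pi}_{\boldsymbol{a},\tau} \in \mathcal{L}(E)$. By classical smooth dependence for ODEs and the $\mathcal{C}^2$-regularity of $b$ and $f$ (Assumptions~\ref{as:linear drift} and \ref{as:hyp on f}), the scaled flow $\Flow[\boldsymbol{a},\tau][t,s](0)$ from \eqref{eq:K alpha tau} is $\mathcal{C}^2$ in $(\boldsymbol{a},\tau) \in B^{2\pi}_{\eta_0}(\alpha_0) \times (\tau_0-\epsilon_0,\tau_0+\epsilon_0)$, uniformly on compact ranges of $t-s$. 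Differentiating the explicit expressions of $K_{\boldsymbol{a},\tau}$ and $H_{\boldsymbol{a},\tau}$ once and twice in $(\boldsymbol{a},\tau)$ produces expressions of the same structure as those bounded in Lemma~\ref{lem:estimates on the kernels}; refining the argument there (possibly replacing $\lambda_0$ by a smaller $\lambda_1 > 0$ to absorb polynomial prefactors from the derivatives of $f$ and of the flow) yields uniform estimates of the form $|\partial^k_{\boldsymbol{a},\tau} K_{\boldsymbol{a},\tau}(t,s)| + |\partial^k_{\boldsymbol{a},\tau} H_{\boldsymbol{a},\tau}(t,s)| \leq C e^{-\lambda_1 (t-s)}$ for $k = 0,1,2$. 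Summing over shifts, the series \eqref{eq:definition of H 2pi a tau} for $K^{2\pi}_{\boldsymbol{a},\tau}$ and $H^{2\pi}_{\boldsymbol{a},\tau}$ converge normally together with their two Fréchet derivatives, giving $\mathcal{C}^2$-regularity as continuous kernels on $[0,2\pi]^2$, hence as compact operators on $E$ in operator norm.

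Next, I apply the implicit function theorem to the map
\[
F : E \times B^{2\pi}_{\eta_0}(\alpha_0) \times (\tau_0-\epsilon_0,\tau_0+\epsilon_0) \to E, \quad F(\pi,\boldsymbol{a},\tau) := \pi - K^{2\pi}_{\boldsymbol{a},\tau}\pi + \Bigl(\int_0^{2\pi}\pi(s)\,ds - 1\Bigr) \mathbf{1},
\]
where $\mathbf{1}$ is the constant function $1$. Integrating $F = 0$ over $[0,2\pi]$ and using the stochasticity identity $\int_0^{2\pi} K^{2\pi}_{\boldsymbol{a},\tau}(t,s)\,dt = 1$ (the $T = 2\pi$ version of \eqref{eq:identity integral K_T}) forces $\int_0^{2\pi} \pi = 1$, after which $F = 0$ reduces to $\pi = K^{2\pi}_{\boldsymbol{a},\tau}\pi$: the zero set of $F$ is exactly the normalized invariant measure $\pi_{\boldsymbol{a},\tau}$. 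For the invertibility of $D_\pi F(\pi_{\boldsymbol{a},\tau};\boldsymbol{a},\tau)\cdot h = h - K^{2\pi}_{\boldsymbol{a},\tau} h + (\int h)\mathbf{1}$, the same integration trick forces $\int h = 0$, whence $h$ lies in the kernel of $I - K^{2\pi}_{\boldsymbol{a},\tau}$, which is spanned by $\pi_{\boldsymbol{a},\tau}$ by Lemma~\ref{lem:space of solution is of dimension 1}; since $\int \pi_{\boldsymbol{a},\tau} = 1 \neq 0 = \int h$, we conclude $h = 0$. Because $K^{2\pi}_{\boldsymbol{a},\tau}$ is compact (Lemma~\ref{lem:compact operator}) and the rank-one correction is also compact, $D_\pi F$ is Fredholm of index zero, and injectivity upgrades to bijectivity. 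The IFT yields $(\boldsymbol{a},\tau) \mapsto \pi_{\boldsymbol{a},\tau}$ of class $\mathcal{C}^2$ into $E$.

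Finally, $c_{\boldsymbol{a},\tau} = \tau H^{2\pi}_{\boldsymbol{a},\tau}(\pi_{\boldsymbol{a},\tau})$ is $\mathcal{C}^2$ as a scalar by composition, and at the reference point $c_{\alpha_0,\tau_0} = 1/(2\pi\gamma(\alpha_0)) > 0$ by \eqref{eq:rho alpha tau is gamma alpha}, so $c_{\boldsymbol{a},\tau}$ stays bounded away from zero on a neighborhood. The ratio $\rho_{\boldsymbol{a},\tau} = \pi_{\boldsymbol{a},\tau}/c_{\boldsymbol{a},\tau}$ is therefore $\mathcal{C}^2$ with values in $E$. The main obstacle of the plan is the second step: obtaining uniform exponential bounds on the first and second $(\boldsymbol{a},\tau)$-derivatives of $K_{\boldsymbol{a},\tau}$ and $H_{\boldsymbol{a},\tau}$ so that the periodized series and their two derivatives converge in operator norm; once this quantitative input is in hand, the IFT verification reduces to the short compactness-plus-Fredholm computation above.
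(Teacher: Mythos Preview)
Your proposal is correct and follows essentially the same route as the paper: first establish $\mathcal{C}^2$-regularity of the operator-valued maps $(\boldsymbol{a},\tau)\mapsto K^{2\pi}_{\boldsymbol{a},\tau},\,H^{2\pi}_{\boldsymbol{a},\tau}$ via uniform exponential bounds on the periodized series (this is exactly Lemma~\ref{lem: C2 regularity of K and H}), then obtain $\mathcal{C}^2$-regularity of $\pi_{\boldsymbol{a},\tau}$ by the implicit function theorem (Lemma~\ref{lem:pi is C2}), and conclude for $\rho_{\boldsymbol{a},\tau}=\pi_{\boldsymbol{a},\tau}/c_{\boldsymbol{a},\tau}$ using positivity of $c_{\alpha_0,\tau_0}$. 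The only cosmetic difference is in the IFT setup: the paper restricts to the hyperplane $C^{0,0}_{2\pi}$ and invokes Lemma~\ref{lem:kernel and range of I-K} for the invertibility of $I-K^{2\pi}_{\alpha_0,\tau_0}$ there, whereas you stay on the full space $C^0_{2\pi}$ and add a rank-one normalization penalty, deducing invertibility from injectivity plus Fredholm index zero---both are standard and equivalent devices for handling the one-dimensional kernel of $I-K^{2\pi}_{\boldsymbol{a},\tau}$.
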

The proof of Proposition~\ref{prop:regularity of rho} relies on \eqref{eq:link between 
tilde phi and 
	tilde r} and on Lemma~\ref{lem:pi is C2} below, which states that the function 
	$(\boldsymbol{a}, 
\tau) \mapsto 
\pi_{\boldsymbol{a}, \tau}$ is $\mathcal{C}^2$. Recall 
Notation~\ref{notation:CT0 and 
		CT00}
\[ C^{0,0}_{2 \pi} := \{  u \in C^0_{2 \pi} | \int_0^{2 \pi}{ u(s) ds } = 0\}. \]
Let $\boldsymbol{a} \in B^{2 \pi}_{\eta_0}$ and $\tau > 0$. Because $\int_0^{2
	\pi}{\pi_{\boldsymbol{a}, \tau}(u) du } = 1$, the space $C^0_{2 \pi}$ can be 
decomposed in
the following way
\[ C^0_{2 \pi} =  \mbox{\normalfont Span}( \pi_{\boldsymbol{a}, \tau}) \oplus 
C^{0,0}_{2 
	\pi}. \]
We denote by $\left. K^{2 \pi}_{\boldsymbol{a},\tau} \right|_{C^0_{2 \pi}}$ the 
restriction of
$K^{2 \pi}_{\boldsymbol{a},\tau}$ to $C^0_{2 \pi}$ (recall that the linear operator $h 
\mapsto
K^{2 \pi}_{\boldsymbol{a},\tau} h$ is defined for all $h \in L^1([0, 2 \pi])$). Similarly, 
we
denote
by $\left. I \right|_{C^0_{2 \pi}}$ the identity operator on $C^0_{2 \pi}$. Given a linear 
operator $L$, we 
denote by $N(L)$ its kernel (null-space) and by $R(L)$ its range.
\begin{lemma}
	\label{lem:kernel and range of I-K}
	Grant Assumptions~\ref{as:linear drift} and \ref{as:hyp on f}, let $\alpha_0 > 0$ 
	such that 
	Assumption~\ref{hyp:alpha is non degenerate} holds and let $\boldsymbol{a} \in 
	B^{2 
		\pi}_{\eta_0}(\alpha_0)$, where $\eta_0 > 0$ is given by 
		Lemma~\ref{lem:estimates on the 
		kernels}.  It holds that
	\[ N (\left. I \right|_{C^0_{2 \pi}} - \left. K^{2 \pi}_{\boldsymbol{a},\tau} 
	\right|_{C^0_{2 
			\pi}}) 
	= \mbox{\normalfont Span}( \pi_{\boldsymbol{a}, \tau}) \quad \text{ and } \quad R 
	(\left. I \right|_{C^0_{2 \pi}} - \left. K^{2 \pi}_{\boldsymbol{a},\tau} \right|_{C^0_{2 
	\pi}}) 
	= 
	C^{0,0}_{2 \pi}.   \]
\end{lemma}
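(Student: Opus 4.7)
The plan is to invoke the Fredholm alternative for the compact operator $K^{2\pi}_{\boldsymbol{a},\tau}$, identifying the one-dimensional fixed space of its transpose with $\mathrm{Span}(\mathbf{1})$, and then to bootstrap continuity of the preimage from the smoothing property $K^{2\pi}_{\boldsymbol{a},\tau}(L^1([0,2\pi])) \subset C^0_{2\pi}$ established in Lemma~\ref{lem:compact operator}.

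First I would record the two structural facts. By (the rescaled version of) Lemma~\ref{lem:compact operator}, the operator $K^{2\pi}_{\boldsymbol{a},\tau}\colon L^1([0,2\pi]) \to L^1([0,2\pi])$ is compact and sends $L^1$ into $C^0_{2\pi}$. By (the rescaled version of) \eqref{eq:identity integral K_T} one has $\int_0^{2\pi} K^{2\pi}_{\boldsymbol{a},\tau}(t,s)\,dt = 1$ for all $s\in[0,2\pi]$, so the transpose $(K^{2\pi}_{\boldsymbol{a},\tau})'$ on $L^\infty([0,2\pi])$ fixes the constant function $\mathbf{1}$.

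For the kernel, Lemma~\ref{lem:space of solution is of dimension 1} (applied with period $T = 2\pi$ to the rescaled kernel) says that $N(I - K^{2\pi}_{\boldsymbol{a},\tau})$ inside $L^1$ is the one-dimensional space $\mathrm{Span}(\pi_{\boldsymbol{a},\tau})$, and $\pi_{\boldsymbol{a},\tau} \in C^0_{2\pi}$. Restriction to $C^0_{2\pi}$ therefore gives $N(\left.I\right|_{C^0_{2\pi}} - \left.K^{2\pi}_{\boldsymbol{a},\tau}\right|_{C^0_{2\pi}}) = \mathrm{Span}(\pi_{\boldsymbol{a},\tau})$. For the range, the inclusion $R \subset C^{0,0}_{2\pi}$ comes from Fubini and the identity above: for any $g \in C^0_{2\pi}$,
\[ \int_0^{2\pi}\!\!\bigl[(I - K^{2\pi}_{\boldsymbol{a},\tau})g\bigr](t)\,dt \;=\; \int_0^{2\pi}\!\! g(s)\Bigl(1 - \int_0^{2\pi}\!\! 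K^{2\pi}_{\boldsymbol{a},\tau}(t,s)\,dt\Bigr)ds \;=\; 0. \]

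For the reverse inclusion, the Fredholm alternative on $L^1$ gives $\dim N(I - (K^{2\pi}_{\boldsymbol{a},\tau})') = \dim N(I - K^{2\pi}_{\boldsymbol{a},\tau}) = 1$, so the adjoint null-space is exactly $\mathrm{Span}(\mathbf{1})$; hence $R(I - K^{2\pi}_{\boldsymbol{a},\tau}) = \{h \in L^1([0,2\pi]) \colon \int_0^{2\pi} h = 0\}$, which contains $C^{0,0}_{2\pi}$. Given any $h \in C^{0,0}_{2\pi}$, this produces $g \in L^1$ with $g - K^{2\pi}_{\boldsymbol{a},\tau} g = h$; but then $g = h + K^{2\pi}_{\boldsymbol{a},\tau} g$ is the sum of two continuous functions by Lemma~\ref{lem:compact operator}, so $g \in C^0_{2\pi}$. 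There is no genuine obstacle here: the argument is a textbook application of the Fredholm alternative on $L^1$ combined with a one-line regularity bootstrap via the smoothing property of $K^{2\pi}_{\boldsymbol{a},\tau}$; the only point to be attentive to is that uniqueness of the adjoint fixed direction is a consequence of the primal one-dimensionality already established, not an independent fact to be proved.
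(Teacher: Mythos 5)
Your proposal is correct and follows essentially the same route as the paper: Lemma~\ref{lem:space of solution is of dimension 1} for the kernel, the Fredholm alternative on $L^1$ to identify the adjoint null-space with $\mathrm{Span}(\boldsymbol{1})$ and hence the range with the mean-zero functions, and the smoothing property $K^{2\pi}_{\boldsymbol{a},\tau}(L^1)\subset C^0_{2\pi}$ to pass to the restriction on $C^0_{2\pi}$. The regularity bootstrap $g = h + K^{2\pi}_{\boldsymbol{a},\tau}g$ that you spell out is exactly the step the paper compresses into its final sentence.
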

\begin{proof}
	We proved in Lemma~\ref{lem:space of solution is of dimension 1} that $N (I - 
	K^{2 
		\pi}_{\boldsymbol{a},\tau}) = \text{Span}( \pi_{\boldsymbol{a},
		\tau}) $. It remains to show that $R (\left.
	I \right|_{C^0_{2 \pi}} - \left. K^{2 \pi}_{\boldsymbol{a},\tau} \right|_{C^0_{2 \pi}}) = 
	C^{0,0}_{2 \pi}$.
	The Fredholm alternative~\cite[Th. 6.6]{MR2759829} yields
	\[ R (I - K^{2 \pi}_{\boldsymbol{a},\tau}) = N(I  - (K^{2 \pi}_{\boldsymbol{a},\tau} ) 
	^\prime )^{ \bot },   \]
	where $ (K^{2 \pi}_{\boldsymbol{a},\tau} ) ^\prime \in \mathcal{L}\left( L^\infty([0, 2 
	\pi]); 
	L^\infty([0, 2 \pi] ) \right)$ is the dual operator of \newline $K^{2 
	\pi}_{\boldsymbol{a},\tau} \in 
	\mathcal{L}\left( 
	L^1([0, 2 \pi]); L^1([0, 2 \pi] ) \right)$.
	In the proof of Lemma~\ref{lem:space of solution is of dimension 1}, it is shown 
	that
	\[ \boldsymbol{1} \in   N(I - (K^{2 \pi}_{\boldsymbol{a},\tau} ) ^\prime), \]
	where $\boldsymbol{1} $ denotes the constant function equal to $1$ on $[0, 2 
	\pi]$.
	The Fredholm alternative yields
	\[ \text{dim }  N(I - (K^{2 \pi}_{\boldsymbol{a},\tau} ) ^\prime) = \text{dim } N(I - 
	K^{2 \pi}_{\boldsymbol{a},\tau}) = 1.  \]
	So
	\[ N(I - (K^{2 \pi}_{\boldsymbol{a},\tau} ) ^\prime) =\mbox{\normalfont Span} ( 
	\boldsymbol{1}).\]
	It follows that
	\[ R (I  - K^{2 \pi}_{\boldsymbol{a},\tau})  = \mbox{\normalfont Span} ( 
	\boldsymbol{1}) ^{ \bot } = \{ u \in L^1 ([0, 2 \pi]) | \int_0^{2 \pi} { u(s) ds} = 0 \}.  \]
	Finally, using that for $h \in L^1 ([0, 2 \pi])$, one has $K^{2 
	\pi}_{\boldsymbol{a},\tau} h \in
	C^0_{2 \pi}$, one obtains the result for the restrictions to $C^0_{2 \pi}$. 
\end{proof}
As a consequence, the linear operator $I - K^{2 \pi}_{\boldsymbol{a},\tau}: C^{0,0}_{2 
	\pi}
\rightarrow C^{0,0}_{2 \pi}$ is invertible, with a continuous inverse.
\begin{lemma}
	\label{lem: C2 regularity of K and H}
	Grant Assumptions~\ref{as:linear drift},
	\ref{as:hyp on f} and let $\alpha_0 > 0$ such that Assumption~\ref{hyp:alpha is 
	non
		degenerate} holds. Let $\tau_0 > 0$.
	There exists $\eta_0, \epsilon_0 > 0$ small enough (only depending on
	$b$, $f$, $\alpha_0$ and $\tau_0$) such that the following function is
	$\mathcal{C}^2$
	Fréchet differentiable
	\[
	\begin{array}{rcl}
		B^{2 \pi}_{\eta_0}(\alpha_0) \times (\tau_0 - \epsilon_0, \tau_0 + \epsilon_0) & 
		\rightarrow
		&
		\mathcal{L}(C^0_{2 \pi}; C^0_{2 \pi}) \\
		(\boldsymbol{a}, \tau) & \mapsto &  H^{2 \pi}_{\boldsymbol{a}, \tau}.
	\end{array}
	\]
	The same result holds for $K^{2 \pi}_{\boldsymbol{a}, \tau}$.
\end{lemma}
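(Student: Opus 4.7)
The plan is to lift the pointwise $\mathcal{C}^2$ regularity of the flow up to the infinite series defining $H^{2\pi}$ and $K^{2\pi}$ via Weierstrass's M-test, using the exponential decay from Lemma~\ref{lem:estimates on the kernels}. The first step is to establish that for each fixed $(t,s)$ with $t\geq s$, the map $(\boldsymbol{a},\tau)\mapsto \varphi^{\boldsymbol{a},\tau}_{t,s}(0)$ is $\mathcal{C}^2$-Fréchet differentiable from $B^{2\pi}_{\eta_0}(\alpha_0)\times(\tau_0-\epsilon_0,\tau_0+\epsilon_0)$ into $\mathbb{R}$. This follows from standard ODE theory applied to \eqref{eq:definition du flow} using Assumption~\ref{as:linear drift} ($b$ is $\mathcal{C}^2$ with bounded $b',b''$): the first and second Fréchet derivatives in the direction $(\boldsymbol{k},\delta)\in C^0_{2\pi}\times\mathbb{R}$ solve linear (respectively, affine) ODEs with coefficients involving $b',b''$ along the flow. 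Shrinking $\eta_0,\epsilon_0$ if necessary, these solutions can be bounded on bounded time intervals by $C(1+(t-s))^2$ for a constant $C$ uniform in $(\boldsymbol{a},\tau)$.

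Next, by the chain rule and composition with $f\in\mathcal{C}^2$ (Assumption~\ref{as:hyp on f}), the maps $(\boldsymbol{a},\tau)\mapsto H_{\boldsymbol{a},\tau}(t,s)$ and $(\boldsymbol{a},\tau)\mapsto K_{\boldsymbol{a},\tau}(t,s)$ are $\mathcal{C}^2$ for each $(t,s)$. Their derivatives up to order two contain products of derivatives of $\tau f(\varphi^{\boldsymbol{a},\tau}_{u,s}(0))$ multiplied by the exponential factor $\exp(-\int_s^t \tau f(\varphi^{\boldsymbol{a},\tau}_{u,s}(0))du)$. The derivatives in $\boldsymbol{a}$ and $\tau$ produce at worst polynomial-in-$(t-s)$ prefactors of degree two, since each differentiation of the exponential factor yields an integral over $[s,t]$. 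Combining this with the estimate~\eqref{eq:210321:H} (and its analog applied to a slightly smaller $\lambda_0$), one obtains uniform bounds of the form
\[
\sup_{(\boldsymbol{a},\tau)}\bigl\|\mathrm{D}^j_{(\boldsymbol{a},\tau)} H_{\boldsymbol{a},\tau}(t,s)\bigr\| \leq C(1+(t-s))^{2}\,e^{-f(\lambda_0)(t-s)},\qquad j=0,1,2,
\]
and likewise for $K_{\boldsymbol{a},\tau}(t,s)$. Here the operator norm is as a multilinear form on $C^0_{2\pi}\times\mathbb{R}$.

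With these bounds in hand, the series $H^{2\pi}_{\boldsymbol{a},\tau}(t,s)=\sum_{k\geq0}H_{\boldsymbol{a},\tau}(t,s-2\pi k)$ and its formal first and second derivatives are dominated term-by-term by $C(1+2\pi k)^{2}e^{-2\pi k f(\lambda_0)}$, uniformly in $(t,s)\in[0,2\pi]^2$ and in $(\boldsymbol{a},\tau)$. By Weierstrass's M-test and the classical theorem on differentiation of uniformly convergent series, $(\boldsymbol{a},\tau)\mapsto H^{2\pi}_{\boldsymbol{a},\tau}(\cdot,\cdot)$ is $\mathcal{C}^2$ as a map into $C^0([0,2\pi]^2)$, and similarly for $K^{2\pi}_{\boldsymbol{a},\tau}$. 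Finally, the linear map that sends a continuous kernel $\kappa\in C^0([0,2\pi]^2)$ to the integral operator $h\mapsto\int_0^{2\pi}\kappa(\cdot,s)h(s)ds\in\mathcal{L}(C^0_{2\pi};C^0_{2\pi})$ is bounded (with norm at most $2\pi$, by the computation in Lemma~\ref{lem:compact operator}), hence $\mathcal{C}^\infty$, and composing gives the claimed $\mathcal{C}^2$ regularity into $\mathcal{L}(C^0_{2\pi};C^0_{2\pi})$.

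The main obstacle is the uniformity of the pointwise derivative estimates in both variables of differentiation: directional derivatives with respect to $\boldsymbol{a}\in C^0_{2\pi}$ are infinite-dimensional, and one must check that the linear functionals on $C^0_{2\pi}$ they define satisfy exponential decay in $(t-s)$. This reduces to controlling integrals of the form $\int_s^t \tau f'(\varphi^{\boldsymbol{a},\tau}_{u,s}(0))\,\partial_{\boldsymbol{a}}\varphi^{\boldsymbol{a},\tau}_{u,s}(0)\,du$, which by Assumption~\ref{as:hyp on f}\ref{as:technical assumption on f} and the bound $|\partial_{\boldsymbol{a}}\varphi^{\boldsymbol{a},\tau}_{u,s}(0)|\leq C(u-s)$ (from the linearization of \eqref{eq:definition du flow}), are absorbed by the leading exponential $\exp(-\int_s^t \tau f(\varphi))du$; the same considerations handle the second derivative.
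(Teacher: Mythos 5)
Your proposal follows essentially the same route as the paper: $\mathcal{C}^2$ dependence of the flow on $(\boldsymbol{a},\tau)$ from ODE theory, chain rule through $f\in\mathcal{C}^2$ for each kernel term, uniform exponential bounds on the derivatives to get normal convergence of the series and term-by-term differentiation, and finally composition with the continuous linear map sending a continuous kernel to its integral operator. One small quantitative slip: $|D_{\boldsymbol{a}}\varphi^{\boldsymbol{a},\tau}_{u,s}(0)\cdot h|$ is bounded by $C\|h\|_\infty(u-s)e^{\tau L(u-s)}$ with $L=\sup|b'|$ (not by $C(u-s)$, since $b'$ need not be nonpositive), but this exponential growth is absorbed by the leading factor $e^{-\int_s^t\tau f(\varphi)}$ exactly as in the paper's Lemma~\ref{lem:estimates on the kernels}, so the argument is unaffected.
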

\begin{proof}
	We only prove the result for $H$, the proof for $K$ being similar.
	Let $\epsilon_0 > 0$ be chosen arbitrary such that $\epsilon_0 < \tau_0$. \\
	\textit{Step 1}. We introduce relevant Banach spaces: $E$ denotes the set of 
	continuous functions
	\begin{align*}
		E &:= \mathcal{C}([0, 2\pi]^2; \mathbb{R}), \quad \text{ equipped with } ||w||_E 
		:= \sup_{t, s}
		|w(t,
		s)| \\
		E_0 &:= \{ w \in E,~ \forall s \in [0, 2\pi],~ w(2 \pi, s) =  w(0, s) \} , \quad \text{ 
		equipped with }
		||\cdot ||_E.
	\end{align*}
	We define the following application $\Phi$,
	\[
	\begin{array}{rcl}
		E_0 & \rightarrow
		&
		\mathcal{L}(C^0_{2 \pi}; C^0_{2 \pi}) \\
		w & \mapsto &  \Phi(w) := \left[ h \mapsto  \left( \int_0^{2 \pi}{w(t, s) h(s) ds} 
		\right)_{t \in [0, 2 \pi]}
		\right].
	\end{array}
	\]
	Note that $\Phi$ is linear and continuous, so in particular $\mathcal{C}^2$. So, to 
	prove the result,
	it suffices to show that
	\[
	\begin{array}{rcl}
		B^{2 \pi}_{\eta_0}(\alpha_0) \times (\tau_0 - \epsilon_0, \tau_0 + \epsilon_0) & 
		\rightarrow
		&
		E_0 \\
		(\boldsymbol{a}, \tau) & \mapsto &  (H^{2 \pi}_{\boldsymbol{a}, \tau}(t, s))_{t,s 
		\in [0, 2 \pi]^2}
	\end{array}
	\]
	is $\mathcal{C}^2$, where $H^{2 \pi}_{\boldsymbol{a}, \tau}(t, s)$ is explicitly 
	given by the series
	\eqref{eq:definition of H 2pi a tau}. \\
	\textit{Step 2}. Let $k \in \mathbb{N}$ be fixed. We prove that the function
	\[
	\begin{array}{rcl}
		B^{2 \pi}_{\eta_0}(\alpha_0) \times (\tau_0 - \epsilon_0, \tau_0 + \epsilon_0) & 
		\rightarrow
		&
		E \\
		(\boldsymbol{a}, \tau) & \mapsto &  (H_{\boldsymbol{a}, \tau}(t, s-2 \pi k))_{t,s 
		\in [0, 2 \pi]^2}
	\end{array}
	\]
	is $\mathcal{C}^2$. To proceed, we use the explicit expression  of 
	$H_{\boldsymbol{a}, \tau}(t, s)$,
	given by \eqref{eq:K alpha
		tau}. Note that we have first to show that the function $(\boldsymbol{a}, \tau)
	\mapsto \varphi^{\boldsymbol{a}, \tau}_{t, s}(0) \in \mathbb{R}$ is 
	$\mathcal{C}^2$. This
	follows (see \cite[Th. 3.10.2]{flett_1980}) from the fact that $b: \mathbb{R}_+ 
	\rightarrow \mathbb{R}$ is $\mathcal{C}^2$
	and so the solution of the ODE
	\eqref{eq:K alpha tau} is $\mathcal{C}^2$ with respect to $\boldsymbol{a}$ and 
	$\tau$.
	Moreover, we have for all $h \in C^0_{2 \pi}$,
	\[ D_{\boldsymbol{a}} \varphi^{\boldsymbol{a}, \tau}_{t, s}(0) \cdot h = \int_s^t{\tau 
	h(u)
		\EXP{\tau \int_u^t{b'(\varphi^{\boldsymbol{a}, \tau}_{\theta, s}(0)) d\theta} } 
		du}.   \]
	A similar expression holds for $\frac{d}{d \tau} \varphi^{\boldsymbol{a}, \tau}_{t, 
	s}(0)$.
	Using that $f$ is $\mathcal{C}^2$, we
	deduce that the function
	\[ (\boldsymbol{a}, \tau)
	\mapsto (H_{\boldsymbol{a}, \tau}(t, s- 2 \pi k))_{t,s} \in E \]
	is $\mathcal{C}^2$.
	Furthermore, we have for instance
	\[ D_{\boldsymbol{a}} H_{\boldsymbol{a}, \tau}(t, s) \cdot h = - H_{\boldsymbol{a},
		\tau}(t, s) \int_s^t{\tau f'( \varphi^{\boldsymbol{a}, \tau}_{u, s}(0)) \left[
		D_{\boldsymbol{a}}
		\varphi^{\boldsymbol{a}, \tau}_{u, s} \cdot h \right]  du }.\]
	So, proceeding as in the proof of Lemma~\ref{lem:estimates on the kernels}, we 
	deduce
	the
	existence of $\eta_0, \lambda_0, A_0> 0$ (only depending on  $b$, $f$, 
	$\alpha_0$,
	$\tau_0$ and $\epsilon_0$) such that for all $h \in C^0_{2 \pi}$ and for
	all $\tau \in (\tau_0 - \epsilon_0, \tau_0 + \epsilon_0)$, it holds that
	\[ \sup_{t,s \in [0, 2 \pi]^2} ~~ \sup_{\boldsymbol{a} \in B^{2 
	\pi}_{\eta_0}(\alpha_0)}|
	D_{\boldsymbol{a}}
	H_{\boldsymbol{a}, \tau}(t, s - 2 \pi k) \cdot h | \leq A_0 ||h||_\infty e^{- 2 \pi k 
		\lambda_0}. \]
	Similar estimates hold for the second derivative with respect to $\boldsymbol{a}$ 
	and for
	the first and second derivatives with respect to $\tau$. \\
	\textit{Step 3}.
	We have
	\[ \sum_{k \geq 0}{ \sup_{t,s \in [0, 2 \pi]^2}~~ \sup_{\boldsymbol{a} \in B^{2
				\pi}_{\eta_0}(\alpha_0)}~~ \sup_{h \in
			C^0_{2 \pi}, ||h||_\infty \leq 1}
		|D_{\boldsymbol{a}} H_{\boldsymbol{a},
			\tau}(t, s-2 \pi k) \cdot h| } \leq  \sum_{k \geq 0}{A_0 e^{- 2 \pi k \lambda_0}}  
			<
	\infty.   \]
	Using~\cite[Th. 3.6.1]{MR0223194}, we deduce that $\boldsymbol{a} 
	\mapsto (H^{2 
		\pi}_{\boldsymbol{a}, \tau}(t, s))_{t, s} \in E$ is Fréchet differentiable, with for all 
		$h \in C^0_{2 \pi}$
	\[D_{\boldsymbol{a}} H^{2 \pi}_{\boldsymbol{a}, \tau}(t, s) \cdot h = \sum_{k \geq 
	0}{
		D_{\boldsymbol{a}}  H_{\boldsymbol{a}, \tau}(t, s - 2 \pi k) \cdot h}.   \]
	Note that this last series converges again normally, and so $\boldsymbol{a} 
	\mapsto (H^{2
		\pi}_{\boldsymbol{a}, \tau}(t, s))_{t, s}$ is in fact $\mathcal{C}^1$.
	Applying again \cite[Th. 3.6.1]{MR0223194}, we prove similarly that
	$\boldsymbol{a} \mapsto H^{2 \pi}_{\boldsymbol{a}, \tau}(t, s)$ is 
	$\mathcal{C}^2$. The
	same arguments shows that $\tau \mapsto H^{2 \pi}_{\boldsymbol{a}, \tau}(t, s)$ 
	is
	$\mathcal{C}^2$.  \\
	\textit{Step 4.} It remains to prove that $(\boldsymbol{a}, \tau) \mapsto (H^{2 
	\pi}_{\boldsymbol{a},
		\tau}(t, s))_{t,s} \in E_0$ is $\mathcal{C}^2$ (we have proved the result for $E$, 
		not $E_0$, in the
	previous step). Let $t, s \in [0, 2 \pi]$ be fixed, define
	\[ w \in E, \quad \mathcal{E}^t_s(w) := w(t, s) \in \mathbb{R}. \]
	The application $\mathcal{E}^t_s$ is linear and continuous. Moreover, we have 
	seen that
	$H^{2 \pi}_{\boldsymbol{a}, \tau} \in E_0$, so
	\[ \forall s \in [0, 2 \pi], \quad  \mathcal{E}^{2 \pi}_s(H^{2 \pi}_{\boldsymbol{a}, 
	\tau}) =
	\mathcal{E}^{0}_s(H^{2 \pi}_{\boldsymbol{a}, \tau}).  \]
	Differentiating with respect to $\boldsymbol{a}$, we deduce that for all $h \in 
	C^0_{2 
		\pi}$,
	\[ \forall s \in [0, 2 \pi], \quad  \mathcal{E}^{2 \pi}_s(D_{\boldsymbol{a}} H^{2 
	\pi}_{\boldsymbol{a},
		\tau} \cdot h) =
	\mathcal{E}^{0}_s(D_{\boldsymbol{a}} H^{2 \pi}_{\boldsymbol{a},
		\tau} \cdot h),  \]
	and so $D_{\boldsymbol{a}} H^{2 \pi}_{\boldsymbol{a},
		\tau} \in \mathcal{L}(C^0_{2\pi}, E_0)$. The same results holds for the second 
	derivative with
	respect to $\boldsymbol{a}$ and the two derivatives with respect to $\tau$. This 
	ends the proof.
\end{proof}
\begin{lemma}
	\label{lem:pi is C2}
	Grant Assumptions~\ref{as:linear drift},
	\ref{as:hyp on f} and let $\alpha_0 > 0$ such that Assumption~\ref{hyp:alpha is 
	non
		degenerate} holds. Let $\tau_0 > 0$. There exists $\epsilon_0,\eta_0 > 0$ small 
		enough
	(only
	depending on
	$b$, $f$, $\alpha_0$ and $\tau_0$) such that the function
	\[
	\begin{array}{rcl}
		B^{2 \pi}_{\eta_0}(\alpha_0) \times (\tau_0 - \epsilon_0, \tau_0 + \epsilon_0) &
		\rightarrow &
		C^0_{2
			\pi}  \\
		(\boldsymbol{a}, \tau) & \mapsto &  \pi_{\boldsymbol{a}, \tau}
	\end{array}
	\]
	is $\mathcal{C}^2$ Fréchet differentiable.
\end{lemma}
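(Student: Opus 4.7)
The plan is to express $\pi_{\boldsymbol{a},\tau}$ through an explicit formula whose $\mathcal{C}^2$ regularity in $(\boldsymbol{a},\tau)$ is manifest. Decompose $\pi_{\boldsymbol{a},\tau} = \pi_{\alpha_0,\tau_0} + g(\boldsymbol{a},\tau)$; the normalization $\int_0^{2\pi} \pi_{\boldsymbol{a},\tau}(s)\,ds = 1$ forces $g(\boldsymbol{a},\tau) \in C^{0,0}_{2\pi}$. Substituting this decomposition into $\pi_{\boldsymbol{a},\tau} = K^{2\pi}_{\boldsymbol{a},\tau}(\pi_{\boldsymbol{a},\tau})$ and using $\pi_{\alpha_0,\tau_0} = K^{2\pi}_{\alpha_0,\tau_0}(\pi_{\alpha_0,\tau_0})$ yields the linear equation
\begin{equation}
\label{eq:plan:fixed point for g}
(I - K^{2\pi}_{\boldsymbol{a},\tau})\, g \;=\; (K^{2\pi}_{\boldsymbol{a},\tau} - K^{2\pi}_{\alpha_0,\tau_0})\,\pi_{\alpha_0,\tau_0}.
\end{equation}

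The next step is to invert the operator on the left in the right space. The identity $\int_0^{2\pi} K^{2\pi}_{\boldsymbol{a},\tau}(t,s)\, dt = 1$ (a direct consequence of \eqref{eq: Ka t+T, s+T}, \eqref{eq:Ka density} and Lemma~\ref{lem:link between H et tilde H and so on} after scaling) shows that $K^{2\pi}_{\boldsymbol{a},\tau}$ preserves the integral over $[0,2\pi]$; hence $I - K^{2\pi}_{\boldsymbol{a},\tau}$ maps $C^0_{2\pi}$ into $C^{0,0}_{2\pi}$, so that both sides of \eqref{eq:plan:fixed point for g} belong to $C^{0,0}_{2\pi}$. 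Furthermore, Lemma~\ref{lem:kernel and range of I-K} asserts that the kernel of $I - K^{2\pi}_{\boldsymbol{a},\tau}$ is $\mbox{\normalfont Span}(\pi_{\boldsymbol{a},\tau})$, which meets $C^{0,0}_{2\pi}$ only at $0$ since $\int_0^{2\pi} \pi_{\boldsymbol{a},\tau} = 1$, and its range is $C^{0,0}_{2\pi}$. Thus the restriction $L_{\boldsymbol{a},\tau} := (I - K^{2\pi}_{\boldsymbol{a},\tau})|_{C^{0,0}_{2\pi}}$ is a continuous bijection of $C^{0,0}_{2\pi}$ onto itself, with continuous inverse by the open mapping theorem.

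We can therefore solve \eqref{eq:plan:fixed point for g} explicitly by
\[
g(\boldsymbol{a},\tau) \;=\; L_{\boldsymbol{a},\tau}^{-1}\, (K^{2\pi}_{\boldsymbol{a},\tau} - K^{2\pi}_{\alpha_0,\tau_0})\,\pi_{\alpha_0,\tau_0}.
\]
By Lemma~\ref{lem: C2 regularity of K and H} the map $(\boldsymbol{a},\tau) \mapsto K^{2\pi}_{\boldsymbol{a},\tau}$ is $\mathcal{C}^2$ into $\mathcal{L}(C^0_{2\pi};C^0_{2\pi})$, so $(\boldsymbol{a},\tau) \mapsto L_{\boldsymbol{a},\tau}$ is $\mathcal{C}^2$ into $\mathcal{L}(C^{0,0}_{2\pi};C^{0,0}_{2\pi})$, and since the inversion map is $\mathcal{C}^\infty$ on the open set of invertible operators, the composition defining $g$ is $\mathcal{C}^2$ on a sufficiently small neighborhood of $(\alpha_0,\tau_0)$. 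Finally, the $\mathcal{C}^2$ map $(\boldsymbol{a},\tau) \mapsto \pi_{\alpha_0,\tau_0} + g(\boldsymbol{a},\tau)$ has integral $1$ and satisfies the fixed-point equation, so by the uniqueness in Lemma~\ref{lem:space of solution is of dimension 1} it coincides with $\pi_{\boldsymbol{a},\tau}$. I do not expect any serious obstruction: once the decomposition has been arranged so that the equation for $g$ lives in $C^{0,0}_{2\pi}$, everything reduces to the regularity of $K^{2\pi}_{\boldsymbol{a},\tau}$ (already established) and to the smoothness of operator inversion.
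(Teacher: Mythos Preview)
Your argument is correct and is essentially the same as the paper's: both rely on Lemma~\ref{lem: C2 regularity of K and H} for the $\mathcal{C}^2$ regularity of $(\boldsymbol{a},\tau)\mapsto K^{2\pi}_{\boldsymbol{a},\tau}$ and on Lemma~\ref{lem:kernel and range of I-K} for the invertibility of $I-K^{2\pi}_{\boldsymbol{a},\tau}$ on $C^{0,0}_{2\pi}$. The only cosmetic difference is that the paper packages the inversion step as an application of the implicit function theorem to $F(h,\boldsymbol{a},\tau)=(\alpha_0+h)-K^{2\pi}_{\boldsymbol{a},\tau}(\alpha_0+h)$ at the single point $(0,\alpha_0,\tau_0)$, whereas you write down the explicit solution $g=L_{\boldsymbol{a},\tau}^{-1}(\cdots)$ and invoke the smoothness of operator inversion.
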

\begin{remark}
	Recall that $\pi_{\boldsymbol{a}, \tau}$ is the unique invariant measure of the 
	Markov
	Chain having $K^{2 \pi}_{\boldsymbol{a},\tau}$ has kernel transition probability. 
	So, we 
	study the 
	smoothness 
	of the invariant measure with respect to the parameters
	$(\boldsymbol{a}, \tau)$, knowing the smoothness of the transition
	probability kernel $(\boldsymbol{a}, \tau) \mapsto K^{2 \pi}_{\boldsymbol{a},\tau}$.
	We refer to  \cite{MR830647} for such sensibility result in the setting of finite 
	discrete-time Markov Chains. Our approach is different and 
	based on 
	the implicit function theorem. In this proof, we consider independent 
	functions $\boldsymbol{a}$ 
		and $h$ (that is we 
		do not have $\boldsymbol{a} = \alpha_0 + h$). 
\end{remark}
\begin{proof}
	Let $\alpha_0$ and $\tau_0$ be fixed. Let $\delta_0, \epsilon_0 > 0$ be given by
	Lemma~\ref{lem: C2 regularity of K and H}. Consider the following 
	$\mathcal{C}^2$-Fréchet differentiable function:
	\[
	\begin{array}{rrcl}
		F: & C^{0,0}_{2 \pi} \times   B^{2 \pi}_{\eta_0}(\alpha_0) \times (\tau_0 - 
		\epsilon_0,
		\tau_0 +
		\epsilon_0)  &
		\rightarrow &
		C^{0,0}_{2 \pi}  \\
		& (h, \boldsymbol{a}, \tau) & \mapsto &  (\alpha_0 + h) -
		K^{2\pi}_{\boldsymbol{a}, \tau} (\alpha_0 + h)  .
	\end{array}
	\]
	It holds that $F(0,
	\alpha_0, \tau_0) = 0$. Moreover
	\[ D_h F(0, \alpha_0, \tau_0)  = I -  K^{2\pi}_{\alpha_0, \tau_0} \in 
	\mathcal{L}(C^{0,0}_{2 \pi},
	C^{0,0}_{2 \pi}), \]
	which is invertible with continuous inverse by Lemma~\ref{lem:kernel and range of 
	I-K}.
	So the implicit function theorem applies: there exists ($V^{0,0}_{2 \pi}$,
	$V^{0}_{2 \pi}$, $V_{\tau_0})$ open neighborhoods of $(0, \alpha_0,
	\tau_0)$ in $C^{0,0}_{2 \pi} \times C^0_{2 \pi} \times \mathbb{R}^*_+$ and a 
	$\mathcal{C}^2$-Fréchet differentiable function
	$U:  V^{0}_{2 \pi} \times  V_{\tau_0}
	\rightarrow V^{0,0}_{2 \pi}$ such that
	\[ \forall h, \boldsymbol{a}, \tau \in  V^{0,0}_{2 \pi} \times  V^{0}_{2 \pi} \times  
	V_{\tau_0},  \quad F(h,
	\boldsymbol{a}, \tau) = 0 \Longleftrightarrow h = U(\boldsymbol{a}, \tau). \]
	By uniqueness of the invariant measure of the Markov chain with transition kernel 
	$K^{2 \pi}_{\boldsymbol{a}, \tau}$, we deduce that
	\[ \pi_{\boldsymbol{a}, \tau} =  \alpha_0 + U(\boldsymbol{a}, \tau), \]
	which is a $\mathcal{C}^2$-Fréchet differentiable function of $(\boldsymbol{a}, 
	\tau)$.
\end{proof}
\begin{proof}[Proof of Proposition~\ref{prop:regularity of rho}]
	Recall that $\rho_{\boldsymbol{a}, \tau} = \frac{\pi_{\boldsymbol{a}, 
			\tau}}{c_{\boldsymbol{a}, 
			\tau}}$,
	where the constant $c_{\boldsymbol{a}, \tau}$ is given by
	\[ c_{\boldsymbol{a}, \tau} = \tau H^{2 \pi}_{\boldsymbol{a},\tau} 
	(\pi_{\boldsymbol{a}, 
		\tau}). \]
	Furthermore, it holds that $\pi_{\alpha_0, \tau_0}  = \frac{1}{2 \pi}$ and 
	$\rho_{\alpha_0, \tau_0}  = 
	\gamma(\alpha_0)$ (see \eqref{eq:rho alpha tau is gamma alpha}). So 
	$c_{\alpha_0, \tau_0} = \frac{1}{2 
		\pi 
		\gamma(\alpha_0)} > 0$.
	So for $\epsilon_0, \eta_0$ small enough, it holds that 
	\[ \forall \boldsymbol{a} \in B^{2 
		\pi}_{\eta_0}(\alpha_0), \forall \tau \in (\tau_0 - \epsilon_0, \tau_0 + \epsilon_0), 
		\quad
	c_{\boldsymbol{a}, \tau} > 0. \]
	Using Lemmas~\ref{lem: C2 regularity of K and H} and~\ref{lem:pi is C2}, it holds 
	that $c$ and
	$\rho$ are $\mathcal{C}^2$, which ends the proof.
\end{proof}
As a first application of this result, we prove that the mean number of spikes of a 
neuron
driven by a periodic input only depends on the mean of the input current.
\begin{proposition}
	\label{prop:c alpha +h = c alpha}
	Grant Assumptions~\ref{as:linear drift},
	\ref{as:hyp on f} and let $\alpha_0 > 0$ such that Assumption~\ref{hyp:alpha is 
	non
		degenerate} holds. Let $\tau_0 > 0$ and consider $\eta_0$ be given by
	Proposition~\ref{prop:regularity of
		rho}.
	Let $h \in C^{0,0}_{2 \pi}$ such that $\alpha_0 + h \in B^{2 \pi}_{\eta_0}(\alpha_0)$.
	It holds that
	\[ c_{\alpha_0 + h, \tau_0} = c_{\alpha_0, \tau_0} = \frac{1}{2 \pi \gamma(\alpha_0)}. 
	\]
	We denote by $c_{\alpha_0}$ this last quantity. In particular, the mean number of 
	spikes per period:
	\[ \frac{1}{2 \pi} \int_0^{2 \pi}{ \rho_{\alpha_0 + h, \tau_0}(u)du} = 
	\gamma(\alpha_0) \]
	only depends on $\alpha_0$ (which is the mean of the external current \( 
	(\alpha_0 + h(t))_{t \in [0, 2 
		\pi]}\)).
\end{proposition}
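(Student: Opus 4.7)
My plan is to combine the $\mathcal{C}^2$-regularity of $\boldsymbol{a} \mapsto c_{\boldsymbol{a},\tau_0}$ (granted by Proposition~\ref{prop:regularity of rho}) with a line-integration argument reducing the identity to the vanishing of a Fréchet derivative along mean-zero directions. The ball $B^{2\pi}_{\eta_0}(\alpha_0)$ is convex, so the segment $t \mapsto \alpha_0 + th$ lies entirely in it for $t \in [0,1]$ whenever $\alpha_0 + h$ does, and the fundamental theorem of calculus yields
\[
c_{\alpha_0+h,\tau_0} - c_{\alpha_0,\tau_0} = \int_0^1 D_{\boldsymbol{a}} c_{\alpha_0+th,\tau_0}[h]\, dt.
\]
It therefore suffices to prove that $D_{\boldsymbol{a}} c_{\boldsymbol{a},\tau_0}[\tilde h] = 0$ for every $\boldsymbol{a}$ inside the ball with $\boldsymbol{a} - \alpha_0 \in C^{0,0}_{2\pi}$ and every $\tilde h \in C^{0,0}_{2\pi}$.

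Computing the value $c_{\alpha_0,\tau_0}$ is straightforward: since $\rho_{\alpha_0,\tau_0} \equiv \gamma(\alpha_0)$ by \eqref{eq:rho alpha tau is gamma alpha} and the identity $c_{\boldsymbol{a},\tau_0} = 1/(2\pi\bar\rho_{\boldsymbol{a},\tau_0})$ holds in stationarity (obtained by integrating \eqref{eq:r tilde of beta and a} over one period), one reads off $c_{\alpha_0,\tau_0} = 1/(2\pi\gamma(\alpha_0))$. Equivalently, time-translation invariance forces $\pi_{\alpha_0,\tau_0} \equiv 1/(2\pi)$, and a direct computation using $H_{\alpha_0,\tau_0}(t,s) = H_{\alpha_0}(\tau_0(t-s))$ together with $\widehat{H}_{\alpha_0}(0) = 1/\gamma(\alpha_0)$ yields the same value.

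For the key step — the vanishing of $D_{\boldsymbol{a}} c_{\boldsymbol{a},\tau_0}[\tilde h]$ — I would switch to the time-averaged density $\Pi_{\boldsymbol{a}}(x) := \frac{1}{2\pi}\int_0^{2\pi}\tilde\nu_{\boldsymbol{a},\tau_0}(t,x)\,dt$ (via Proposition~\ref{prop:the periodic solution of non homogenous current}), so that $\bar\rho_{\boldsymbol{a},\tau_0} = \int_0^\infty f(x)\Pi_{\boldsymbol{a}}(x)\,dx$. Differentiating in $\boldsymbol{a}$ in the direction $\tilde h$, using the Volterra equation and Fubini, leads to an expression of the form $D_{\boldsymbol{a}}\bar\rho_{\boldsymbol{a},\tau_0}[\tilde h] = \int_0^{2\pi}\tilde h(s)\Phi_{\boldsymbol{a}}(s)\,ds$ for some function $\Phi_{\boldsymbol{a}}$ independent of $\tilde h$. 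The required cancellation is that $\Phi_{\boldsymbol{a}}$ lies in the span of the constant function $\mathbf{1}$; combined with $\tilde h \in C^{0,0}_{2\pi}$ this kills the integral. That $\Phi_{\boldsymbol{a}} \in \mbox{\normalfont Span}(\mathbf{1})$ should follow from an adjoint argument based on Lemma~\ref{lem:kernel and range of I-K}, which identifies the null space of the dual operator $I - (K^{2\pi}_{\boldsymbol{a},\tau_0})^\prime$ with $\mbox{\normalfont Span}(\mathbf{1})$; concretely, testing the linearized normalization identity against a fixed point of $(K^{2\pi}_{\boldsymbol{a},\tau_0})^\prime$ should rewrite $\Phi_{\boldsymbol{a}}$ as such a fixed point.

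I expect the main obstacle to be carrying out this cancellation cleanly at a \emph{general} $\boldsymbol{a}$ with mean $\alpha_0$, rather than only at $\boldsymbol{a} \equiv \alpha_0$ where time-translation invariance simplifies matters. Concretely, one must separate the explicit $\boldsymbol{a}$-dependence of the kernels $H_{\boldsymbol{a},\tau_0}, K_{\boldsymbol{a},\tau_0}$ from the implicit $\boldsymbol{a}$-dependence of $\pi_{\boldsymbol{a},\tau_0}$, and combine them through the Fredholm structure of $I - K^{2\pi}_{\boldsymbol{a},\tau_0}$. I expect this to require an integration by parts along the flow $\varphi^{\boldsymbol{a},\tau_0}$, in the spirit of the derivation of $\Theta_\alpha$ in \cite{cormier2020meanfield}.
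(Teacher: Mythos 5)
Your skeleton is the same as the paper's: reduce to showing $D_{\boldsymbol{a}} c_{\boldsymbol{a},\tau_0}\cdot \tilde h=0$ for $\tilde h\in C^{0,0}_{2\pi}$, integrate along the segment $t\mapsto \alpha_0+th$, and read off $c_{\alpha_0,\tau_0}=1/(2\pi\gamma(\alpha_0))$ from $\pi_{\alpha_0,\tau_0}\equiv 1/(2\pi)$ and $\rho_{\alpha_0,\tau_0}\equiv\gamma(\alpha_0)$. Those parts are correct. But the heart of the proposition is precisely the step you leave as "should follow from an adjoint argument" and flag as the expected obstacle, so the proof is not complete: you correctly reformulate the goal as "$\Phi_{\boldsymbol{a}}$ is constant", i.e.\ the derivative functional annihilates mean-zero directions, but you never exhibit the identity that forces this at a general non-constant $\boldsymbol{a}$, where time-translation invariance is unavailable.

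The missing ingredient is not an adjoint computation but the elementary relation $1*K_{\boldsymbol{a},\tau_0}=1-H_{\boldsymbol{a},\tau_0}$ (the integrated density of the first jump time is one minus the survival function), which after periodization reads $H^{2\pi}_{\boldsymbol{a},\tau_0}=\mathbbm{1}^{2\pi}\bigl[I-K^{2\pi}_{\boldsymbol{a},\tau_0}\bigr]$ on $C^{0,0}_{2\pi}$, with $\mathbbm{1}^{2\pi}$ the primitive operator. Writing $c_{\boldsymbol{a},\tau_0}=\tau_0 H^{2\pi}_{\boldsymbol{a},\tau_0}(\pi_{\boldsymbol{a},\tau_0})$ and using $D_{\boldsymbol{a}}\pi_{\boldsymbol{a},\tau_0}\cdot h=[I-K^{2\pi}_{\boldsymbol{a},\tau_0}]^{-1}[D_{\boldsymbol{a}}K^{2\pi}_{\boldsymbol{a},\tau_0}\cdot h]\pi_{\boldsymbol{a},\tau_0}$ (which does rest on Lemma~\ref{lem:kernel and range of I-K}, as you anticipated), the above relation gives both $H^{2\pi}_{\boldsymbol{a},\tau_0}[I-K^{2\pi}_{\boldsymbol{a},\tau_0}]^{-1}=\mathbbm{1}^{2\pi}$ and, upon differentiation, $D_{\boldsymbol{a}}H^{2\pi}_{\boldsymbol{a},\tau_0}\cdot h=-\mathbbm{1}^{2\pi}[D_{\boldsymbol{a}}K^{2\pi}_{\boldsymbol{a},\tau_0}\cdot h]$; the two contributions to $D_{\boldsymbol{a}}c_{\boldsymbol{a},\tau_0}\cdot h$ (explicit dependence through $H$, implicit through $\pi$) then cancel exactly. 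Without this (or an equivalent) identity, your detour through the time-averaged density $\Pi_{\boldsymbol{a}}$ and the dual fixed-point space does not by itself produce the cancellation, so the argument has a genuine gap at its central step.
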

\begin{proof}
	Let $\boldsymbol{a} \in B^{2 \pi}_{\eta_0}(\alpha_0)$. We prove that
	\[ \forall h \in C^{0,0}_{2 \pi}, \quad  D_{\boldsymbol{a}} c_{\boldsymbol{a}, \tau_0} 
	\cdot h = 0. \]
	We have $c_{\boldsymbol{a}, \tau_0} = \tau_0 H^{2 \pi}_{\boldsymbol{a},\tau_0} 
	\left(
	\pi_{\boldsymbol{a}, \tau_0} \right)$.  Differentiating with respect to 
	$\boldsymbol{a}$, one gets
	\[
	D_{\boldsymbol{a}} c_{\boldsymbol{a}, \tau_0} \cdot h  = \tau_0 \left 
	[D_{\boldsymbol{a}}
	H^{2
		\pi}_{\boldsymbol{a},\tau_0}
	\cdot h \right] \left( \pi_{\boldsymbol{a}, \tau_0} \right) + \tau_0 H^{2 
	\pi}_{\boldsymbol{a},\tau_0}
	D_{\boldsymbol{a}}
	\pi_{\boldsymbol{a}, \tau_0} \cdot h.
	\]
	Recall that $\pi_{\boldsymbol{a}, \tau_0} = K^{2 \pi}_{\boldsymbol{a},\tau_0} 
	\pi_{\boldsymbol{a},
		\tau_0}$ so
	\[  D_{\boldsymbol{a}} \pi_{\boldsymbol{a}, \tau_0} \cdot h = \left[ 
	D_{\boldsymbol{a}}  K^{2 
		\pi}_{\boldsymbol{a},\tau_0} \cdot h \right] \pi_{\boldsymbol{a}, \tau_0} + K^{2 
		\pi}_{\boldsymbol{a}, 
		\tau_0} \left[ D_{\boldsymbol{a}} \pi_{\boldsymbol{a}, \tau_0} \cdot h \right].  \]
	So, using Lemma~\ref{lem:kernel and range of I-K}, one has
	\begin{equation}
		\label{eq:d pi a}
		D_{\boldsymbol{a}} \pi_{\boldsymbol{a}, \tau_0} \cdot h = \left[ I - K^{2 
			\pi}_{\boldsymbol{a},\tau_0} \right]^{-1} 
		\left [D_{\boldsymbol{a}}  K^{2 \pi}_{\boldsymbol{a},\tau_0} \cdot h \right] 
		\pi_{\boldsymbol{a}, \tau_0}.    
	\end{equation}
	Define on $C^{0,0}_{2 \pi}$ the linear operator
	\[ \forall h \in C^{0,0}_{2 \pi},\quad \mathbbm{1}^{2 \pi} (h)  (t) := \int_0^{2 \pi}{ 
		\indic{t \geq  s} h(s) ds} =  \int_0^t{h(s) ds}. \]
	We have
	\begin{equation}
		\label{eq:1*K a tau}
		1 * K_{\boldsymbol{a},\tau_0} = 1 - H_{\boldsymbol{a},\tau_0}, 
	\end{equation}
	so on $C^{0,0}_{2 \pi}$,
	\begin{equation} H^{2 \pi}_{\boldsymbol{a},\tau_0} =     \mathbbm{1}^{2 \pi} \left[ I 
	-  K ^{2
			\pi}_{\boldsymbol{a},\tau_0} \right].
		\label{eq:link between Ka 2pi et Ha 2pi}
	\end{equation}
	So
	\[  H^{2 \pi}_{\boldsymbol{a},\tau_0} \left[ I -  K ^{2 \pi}_{\boldsymbol{a},\tau_0} 
	\right]^{-1} =
	\mathbbm{1}^{2 \pi}.  \]
	Consequently, we have
	\[
	D_{\boldsymbol{a}} c_{\boldsymbol{a}, \tau_0} \cdot h  = \tau_0 \left 
	[D_{\boldsymbol{a}}
	H^{2 \pi}_{\boldsymbol{a},\tau_0} \cdot h
	\right] \left( \pi_{\boldsymbol{a}, \tau_0} \right) + \tau_0 \mathbbm{1}^{2 \pi} \left
	[D_{\boldsymbol{a}}  K^{2
		\pi}_{\boldsymbol{a},\tau_0} \cdot h \right] \pi_{\boldsymbol{a}, \tau_0}  \]
	Differentiating \eqref{eq:link between Ka 2pi et Ha 2pi}, one has
	\[ D_{\boldsymbol{a}}  H^{2 \pi}_{\boldsymbol{a},\tau_0} \cdot h = - 
	\mathbbm{1}^{2 \pi}
	\left [D_{\boldsymbol{a}}  K^{2
		\pi}_{\boldsymbol{a},\tau_0} \cdot h \right],\]
	and so for all $h \in C^{0,0}_{2 \pi}$,  $D_{\boldsymbol{a}} c_{\boldsymbol{a}, 
	\tau_0} 
	\cdot h =
	0$.
	Then for all $h \in C^{0,0}_{2 \pi}$ such that $\alpha_0 + h \in B^{2 
		\pi}_{\eta_0}(\alpha_0)$,
	one has
	\[ c_{\alpha_0+h, \tau_0} - c_{\alpha_0, \tau_0} = \int_0^1{ \left[ D_{\boldsymbol{a}} 
	c_{\alpha_0 + 
			th,
			\tau_0}
		\cdot h \right] dt} = 0.
	\]
	Finally we have $\pi_{\alpha_0, \tau_0} = \frac{1}{2 \pi}$ and, by \eqref{eq:rho 
	alpha tau is gamma 
		alpha}, $\rho_{\alpha_0, \tau_0} = 
	\gamma(\alpha_0)$. By definition \eqref{eq:link between tilde phi and tilde r}, we 
	have 
	$c_{\alpha_0, 
		\tau_0} = \frac{\pi_{\alpha_0, \tau_0}}{\rho_{\alpha_0, \tau_0}}$. It ends the 
		proof.
\end{proof}

\subsection{Strategy to handle the non-linear equation 
	\texorpdfstring{\eqref{NL-equation}}{}}
\label{sec:fonctionnal G}
Grant Assumptions~\ref{as:linear drift}, \ref{as:hyp on f} and let $\alpha_0 > 0$ such 
that 
Assumption~\ref{hyp:alpha is non degenerate} holds. Let $\tau_0 > 0$ be given by 
Assumption~\ref{ass:i / b0 roots of the characteristic equation}.
For $\eta_0, \epsilon_0 > 0$, define
$G:  B^{2 \pi}_{\eta_0}(\alpha_0) \cap C^{0,0}_{2 \pi} \times (\alpha_0 - \eta_0, 
\alpha_0 + 
\eta_0) \times  (\tau_0 - \epsilon_0, \tau_0 + \epsilon_0) \rightarrow C^{0,0}_{2 \pi}$ 
such that
\begin{equation}
	\label{eq:the function G}
	G(h, \alpha, \tau) :=  (\alpha + h) -  J(\alpha) \rho_{\alpha + h, \tau}.
\end{equation}
Using Propositions~\ref{prop:regularity of rho} and \ref{prop:c 
	alpha +h = c alpha}, we choose $\eta_0, \epsilon_0$ small enough such that $G$ 
	is 
$\mathcal{C}^2$-Fréchet differentiable and indeed takes values in $C^{0,0}_{2 \pi}$.
For any constant $\alpha, \tau > 0$, we have, by \eqref{eq:rho alpha tau is gamma 
alpha},  $ 
\rho_{\alpha, \tau}  = \gamma(\alpha)$. Recalling that $J(\alpha) \gamma(\alpha) = 
\alpha$, 
we have
\begin{equation}
	\label{eq:trivial solutions of G}
	\forall (\alpha, \tau) \in  (\alpha_0 - \eta_0, \alpha_0 + \eta_0) \times (\tau_0 - 
	\epsilon_0, 
	\tau_0 + 
	\epsilon_0),\quad G(0, \alpha, \tau) = 0.
\end{equation}
Those are the trivial roots of $G$. To construct the periodic solutions to 
\eqref{NL-equation}, we
find the non-trivial roots of $G$. In fact, Theorem~\ref{th:main result hopf} is 
deduced
from the following proposition.
\begin{proposition}
	\label{prop:the zeros of G}
	Consider $b, f$ and $\alpha_0, \tau_0 > 0$ such that Assumptions~\ref{as:linear 
	drift},
	\ref{as:hyp
		on f}, \ref{hyp:alpha is non
		degenerate}, \ref{ass:i / b0 roots of the characteristic equation}, 
		\ref{ass:nonresonance
		condition} and \ref{ass:hopf condition} hold. Let $G$ be defined by 
		\eqref{eq:the function G}.
	There exists $X \times V_{\alpha_0} \times V_{\tau_0}$ an open neighborhood of 
	$(0,
	\alpha_0,
	\tau_0)$ in $(C^{0,0}_{2\pi}, ||\cdot||_\infty) \times \mathbb{R}^*_+ \times 
	\mathbb{R}^*_+$  
	such that:
	\begin{enumerate}
		\item
		There exists a continuous curve $\{ (h_v, \alpha_v, \tau_v),~ v \in (-v_0, v_0) \}$ 
		passing through $(0, 
		\alpha_0, \tau_0)$ at 
		$v = 0$ and such that for all $v \in (-v_0, v_0)$
		\[ (h_v, \alpha_v, \tau_v) \in X \times V_{\alpha_0} \times V_{\tau_0} \quad \text{ 
		and }
		\quad G \left( h_v, \alpha_v, \tau_v \right) = 0. \]
		Moreover, it holds that
		\[ \forall v \in (-v_0, v_0), \quad \frac{1}{2 \pi} \int_0^{2 \pi}{h_v(t) \cos(t) dt} = v 
		\quad \text{ and } 
		\quad \frac{1}{2 \pi} \int_0^{2 \pi}{h_v(t) \sin(t) dt} = 0. \]
		In particular, $h_v \not\equiv 0$ for $v \neq 0$.
		\item For all $(h, \alpha, \tau) \in X \times V_{\alpha_0} \times V_{\tau_0}$, with 
		$h 
		\not\equiv 0$, it holds
		that
		\[ G(h, \alpha, \tau) = 0 \iff \left[ \exists v \in (-v_0, v_0), \exists \theta \in [0, 2 
		\pi),
		\quad
		(h, \alpha, \tau) \equiv (S_\theta(h_v), \alpha_v,
		\tau_v) \right].  \]
	\end{enumerate}
\end{proposition}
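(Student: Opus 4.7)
The plan is to prove Proposition~\ref{prop:the zeros of G} by the classical Lyapunov--Schmidt method, as anticipated in the outline. My first step would be to linearize $G$ at a trivial root. Since $G(h, \alpha, \tau) = (\alpha+h) - J(\alpha)\rho_{\alpha+h,\tau}$ and $\rho_{\alpha,\tau} \equiv \gamma(\alpha)$ is constant, I would compute $L_{\alpha,\tau} := D_h G(0, \alpha, \tau) \in \mathcal{L}(C^{0,0}_{2\pi})$, using Proposition~\ref{prop:regularity of rho} together with the link between derivatives of $\rho$ and the kernel $\Theta_\alpha$ indicated at the end of Section~\ref{sec:main result}. The expected form is $L_{\alpha,\tau}(h) = h - J(\alpha)\,\tau\,\Theta_\alpha(\tau\,\cdot) * h$ (or an analogous periodic convolution), so that on Fourier modes $e^{int}$ the action reduces to multiplication by $1 - J(\alpha)\widehat{\Theta}_\alpha(in/\tau)$.

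Next, using the Fourier diagonalization of $L_{\alpha_0,\tau_0}$, I would combine Assumption~\ref{ass:i / b0 roots of the characteristic equation} (giving $1 - J(\alpha_0)\widehat{\Theta}_{\alpha_0}(i/\tau_0) = 0$) with Assumption~\ref{ass:nonresonance condition} (ruling out all other harmonics) to establish that the kernel $N(L_{\alpha_0,\tau_0})$ is exactly two-dimensional, spanned by $\cos(t)$ and $\sin(t)$. The non-vanishing derivative of $\widehat{\Theta}_{\alpha_0}$ at $i/\tau_0$ in Assumption~\ref{ass:i / b0 roots of the characteristic equation} will be used to check that these are \emph{simple} eigenvalues, which, together with the compactness inherited from $K^{2\pi}_{\boldsymbol{a},\tau}$ (Lemma~\ref{lem:compact operator}), yields that $L_{\alpha_0,\tau_0}$ is Fredholm of index zero with a closed range complementary to $N(L_{\alpha_0,\tau_0})$. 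This is Section~\ref{sec:fredholm operator} in the paper.

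I would then carry out the Lyapunov--Schmidt reduction. Choose the projector $P$ onto the two-dimensional kernel along the range, decompose $h = v\cos(\cdot) + w\sin(\cdot) + \tilde h$ with $\tilde h \in (I-P)C^{0,0}_{2\pi}$, and split $G = 0$ into the auxiliary equation $(I-P)G = 0$ and the bifurcation equation $PG = 0$. Because $D_{\tilde h}(I-P)G(0,\alpha_0,\tau_0) = (I-P)L_{\alpha_0,\tau_0}$ is invertible on its range, the implicit function theorem produces a $\mathcal{C}^2$ map $\tilde h = \tilde h(v,w,\alpha,\tau)$ with $\tilde h(0,0,\alpha,\tau) \equiv 0$. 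Substituting back leaves a two-dimensional bifurcation equation $\Phi(v,w,\alpha,\tau) = 0$ in $\mathbb{R}^2$. To exploit the phase-shift symmetry coming from \eqref{eq:rho under translation}, I would observe that $G$ is equivariant under the $S^1$-action $S_\theta$, which acts on the kernel as rotation in $(v,w)$; this immediately allows me to impose the phase condition $w = 0$, reducing to a problem in $(v,\alpha,\tau)$, and simultaneously to recover every other periodic solution by phase-shifting, giving statement (2).

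Finally, the reduced 2D problem at $(v,\alpha,\tau) = (0,\alpha_0,\tau_0)$ is where Assumption~\ref{ass:hopf condition} enters. Factoring out $v$ from the single remaining scalar equation (the $w = 0$ component vanishes identically by symmetry, while the $v$-equation has a trivial zero along $v = 0$), I get a smooth equation $\Psi(v,\alpha,\tau) = 0$ with $\Psi(0,\alpha_0,\tau_0) = 0$ whose Jacobian in $(\alpha,\tau)$ is precisely $(\mathrm{Re}\,\mathfrak{Z}_0'(\alpha_0),\,\mathrm{Im}\,\mathfrak{Z}_0'(\alpha_0))$ up to a non-zero factor, hence invertible by Assumption~\ref{ass:hopf condition}. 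One last application of the implicit function theorem then produces the curve $v \mapsto (\alpha_v, \tau_v)$ and completes statement (1). The main obstacles I expect are (a) verifying the explicit formula $D_h G(0,\alpha,\tau) = I - J(\alpha)\widehat{\Theta}$-convolution in a form that allows clean Fourier analysis, and (b) handling the $S^1$-symmetry carefully enough to justify dividing by $v$ and to guarantee that uniqueness of solutions really holds modulo phase in a full $C^0$-neighborhood rather than just along the reduced variables.
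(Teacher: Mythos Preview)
Your overall strategy matches the paper's approach (linearize, Fredholm, Lyapunov--Schmidt, $S^1$-reduction, final implicit function theorem), but the final step contains a genuine error in dimension counting. After the $S^1$-equivariance reduces the complex kernel parameter $c$ to a real $v$, the bifurcation equation $\hat{\Phi}(v,\alpha,\tau) = 0$ is still \emph{complex-valued}, i.e., two real equations. The equivariance $\hat{\Phi}(c\,e^{i\theta},\alpha,\tau) = e^{i\theta}\hat{\Phi}(c,\alpha,\tau)$ only forces $\hat{\Phi}(v,\alpha,\tau)$ to be odd in $v$; it does \emph{not} make the imaginary (or ``$w$'') component vanish identically. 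Dividing by $v$ therefore yields a complex equation $\tilde{\Phi}(v,\alpha,\tau) = 0$, and one must show that its real $2\times 2$ Jacobian in $(\alpha,\tau)$ at $(0,\alpha_0,\tau_0)$ is invertible. In the paper, writing $x_0 + i y_0 := J(\alpha_0)\,\partial_z\widehat{\Theta}_{\alpha_0}(i/\tau_0)$, one computes $D_\alpha\tilde{\Phi} = \mathfrak{Z}_0'(\alpha_0)(x_0+iy_0)$ and $D_\tau\tilde{\Phi} = (ix_0-y_0)/\tau_0^2$, with determinant $\tfrac{\Re\mathfrak{Z}_0'(\alpha_0)}{\tau_0^2}(x_0^2+y_0^2)$. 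Your assertion that the Jacobian ``is precisely $(\Re\mathfrak{Z}_0',\Im\mathfrak{Z}_0')$'' describes a $1\times 2$ vector, which cannot be inverted and would not yield a curve $v\mapsto(\alpha_v,\tau_v)$ but rather a two-parameter family.

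A secondary point: the non-vanishing of $\partial_z\widehat{\Theta}_{\alpha_0}(i/\tau_0)$ in Assumption~\ref{ass:i / b0 roots of the characteristic equation} is not used to establish ``simplicity of eigenvalues'' in the Fredholm step; Proposition~\ref{prop:ker and im of B0} relies only on $J(\alpha_0)\widehat{\Theta}_{\alpha_0}(i/\tau_0)=1$ and the nonresonance condition. Its actual role is to guarantee $x_0^2+y_0^2\neq 0$ in the determinant above, which, combined with Assumption~\ref{ass:hopf condition} ($\Re\mathfrak{Z}_0'(\alpha_0)\neq 0$), gives the invertibility needed for the final implicit function theorem.
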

We here prove that our main result is a consequence  of this proposition.
\begin{proof}[Proof that Proposition~\ref{prop:the zeros of G} implies 
	Theorem~\ref{th:main result hopf}]
	Let $(h_v, \alpha_v, \tau_v)$ be the continuous curve 
	given by Proposition~\ref{prop:the zeros of G}. Define $\boldsymbol{a}_v$ 
	\[ 
	\forall t \in \mathbb{R}, \quad a_v(t) := \alpha_v + h_v( t / \tau_v ).
	\]
	The function $\boldsymbol{a}_v$ is $2 \pi \tau_v$-periodic and continuous. From 
	$G(h_v, \alpha_v,
	\tau_v) = 0$, we deduce that $\boldsymbol{a}_v$ solves 
	\eqref{eq:periodic solution}:
	\[ \boldsymbol{a}_v = J(\alpha_v) \rho_{\boldsymbol{a}_v}. \]
	Consider $\tilde{\nu}_{\boldsymbol{a}_v}$ defined by \eqref{eq:definition de nu 
	infty a for periodic 
		a}. By Proposition~\ref{prop:the periodic solution of non homogenous current}, 
	$(\tilde{\nu}_{\boldsymbol{a}_v}(t))$ is a \(2\pi\tau_v\)-periodic solution of 
	\eqref{NL-equation} and $(\tilde{\nu}_{\boldsymbol{a}_v}, \alpha_v, \tau_v)$
	satisfies all  the
	properties stated in Theorem~\ref{th:main result hopf}: this gives the existence 
	part 
	of the proof. We now prove uniqueness.
	
	Let $\epsilon_0 > 0$ small enough such that $(\tau_0 - \epsilon_0, \tau_0 + 
	\epsilon_0)
	\subset V_{\tau_0}$,   $V_{\tau_0}$ being given by Proposition~\ref{prop:the 
	zeros of G}.
	Let $J, \tau > 0$ be fixed,  consider $\nu(t)$ a $2 \pi  \tau$-periodic
	solution of \eqref{NL-equation} such that
	\[ |\tau-\tau_0| < \epsilon_0  \quad \text{and}
	\quad \sup_{t \in [0, 2 \pi \tau]}  \left| J \int_{\mathbb{R}_+}f(x) \nu(t, dx) - 
	\alpha_0  \right|
	< \epsilon_1, \]
	for some constant $\epsilon_1 > 0$ to be specified later.
	Define $\boldsymbol{a}$ 
	\[ \forall t \in \mathbb{R}, \quad a(t) := J \int_{\mathbb{R}_+}{f(x) \nu(t,dx)}. \]
	The function $\boldsymbol{a}$ is $2 \pi \tau$-periodic. 
	Let $(X_t)_{t \geq 0}$ be the solution of the non-linear equation 
	\eqref{NL-equation}, starting with the
	initial 
	condition $\nu(0) \in \mathcal{M}(f^2)$. The arguments of \cite[Lem. 
	24]{CTV} show that, under Assumptions~\ref{as:linear drift} and \ref{as:hyp on f}, 
	the 
	function $t \mapsto \E
	f(X_t)$ is continuous, and so $\boldsymbol{a} \in C^0_{2 \pi \tau}$.
	We write
	\[ a(t) =: \alpha + h(t / \tau), \]
	for some constant $\alpha$ and some $h \in C^{0,0}_{2 \pi}$. Because $\nu(t)$ 
	is a
	periodic solution of \eqref{NL-equation}, it holds that
	\[ \boldsymbol{a} = J \rho_{\boldsymbol{a}}, \]
	or equivalently,
	\begin{equation}\label{eq:20200721}
		\alpha + h = J \rho_{\alpha + h, \tau}.
	\end{equation}
	We have by assumption
	\begin{align*}
		| \alpha - \alpha_0| &= \left| \frac{1}{2 \pi} \int_0^{2 \pi}{ J \int_{\mathbb{R}_+}{ 
		f(x) \nu(\tau
				u, dx)}
			du} - \frac{1}{2 \pi} \int_0^{2 \pi}{ J(\alpha_0) \int_{\mathbb{R}_+}{ f(x)
				\nu^{\infty}_{\alpha_0}(dx)}
			du} \right|  < \epsilon_1. \\
	\end{align*}
	Recall that $\alpha_0$ satisfies Assumption~\ref{hyp:alpha is non degenerate}.
	By Lemma~\ref{lem: prop of sigma_a near alpha_0} and using the continuity of 
	$b'$, we can assume 
	that $\epsilon_1$ is small enough
	such that Assumption~\ref{hyp:alpha is non degenerate} is also satisfied by 
	$\alpha$.
	Let  $\eta_0$ be given by Proposition~\ref{prop:regularity of rho} ($\eta_0$ only
	depends on $b, f, \alpha_0$ and $\tau_0$).
	Provided that $\epsilon_1 \leq \eta_0$, we can apply
	Proposition~\ref{prop:c alpha +h = c alpha} at $(\alpha, \tau)$. It holds that
	\[ \frac{1}{2 \pi} \int_0^{2 \pi}{ \rho_{\alpha + h, \tau}(u) du} = \gamma(\alpha), \]
	so \eqref{eq:20200721} gives
	\[ \alpha = J \gamma(\alpha). \]
	This proves that $J = J(\alpha)$ and so \eqref{eq:20200721} yields $G(h, \alpha, \tau) = 0$.
	By the uniqueness part of Proposition~\ref{prop:the zeros of G}, there exists 
	$\theta
	\in [0, 2 \pi)$ and $v \in (-v_0, v_0)$ such that
	\[ \forall t, \quad h(t) = h_v(t + \theta),  \quad \alpha = \alpha_v, \quad \tau = 
	\tau_v. \]
	So, we deduce that $a(t) = \alpha_v + h_v\left(\frac{t + \theta}{\tau_v} \right)$ and 
	$J = J(\alpha_v)$. 
	This
	ends the proof.
\end{proof}
It remains to prove Proposition~\ref{prop:the zeros of G}.
\subsection{Linearization of  \texorpdfstring{$G$}{G}.}
\label{sec:linearization of G}
Define:
\begin{equation}
	\label{eq:definition de Theta_(alpha,tau)}
	\forall t \in \mathbb{R}, \quad \Theta_{\alpha, \tau} (t) := \tau \Theta_\alpha(\tau t) 
	\indica{\mathbb{R}_+}(t),
\end{equation}
where $\Theta_\alpha$ is given by~\eqref{eq:formule donnant Theta.}. The main 
result of
this section is the following.
\begin{proposition}
	\label{prop:the differential of G at x = 0}
	Let $h \in C^{0,0}_{2 \pi}$. It holds that
	\[ \left[ D_h G(0,\alpha, \tau) \cdot h \right] (t)=  h(t) -   J(\alpha) 
	\int_{\mathbb{R}}{ \Theta_{\alpha, \tau}(t-s) 
		h(s) ds } . \]
\end{proposition}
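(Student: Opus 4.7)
The plan is to apply the chain rule to reduce the claim to a linear-response identity for $\rho_{\boldsymbol{a},\tau}$, and then derive that identity by linearizing the Volterra equation satisfied by $\rho_{\boldsymbol{a},\tau}$. Since $G(h,\alpha,\tau) = (\alpha+h) - J(\alpha)\rho_{\alpha+h,\tau}$, differentiating in $h$ at $0$ yields $D_h G(0,\alpha,\tau)\cdot h = h - J(\alpha)\delta\rho$ with $\delta\rho(t) := D_{\boldsymbol{a}}\rho_{\boldsymbol{a},\tau}|_{\alpha}\cdot h\,(t)$. The statement thus reduces to proving the linear-response identity $\delta\rho(t) = \int_{\mathbb{R}}\Theta_{\alpha,\tau}(t-s)h(s)\,ds$.

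To do this, I would differentiate the equation $\rho_{\boldsymbol{a},\tau}(t) = \int_{-\infty}^t K_{\boldsymbol{a},\tau}(t,s)\rho_{\boldsymbol{a},\tau}(s)\,ds$ from \eqref{eq:r tilde of beta and a} at $\boldsymbol{a}=\alpha$, using $\rho_{\alpha,\tau}\equiv\gamma(\alpha)$ from \eqref{eq:rho alpha tau is gamma alpha}. This gives
\[ \delta\rho(t) = \gamma(\alpha)\int_{-\infty}^{t}[D_{\boldsymbol{a}} K_{\boldsymbol{a},\tau}(t,s)|_{\alpha}\cdot h]\,ds + \int_{-\infty}^{t} K_{\alpha,\tau}(t-s)\,\delta\rho(s)\,ds. \]
The source term relies on the functional-derivative formula
\[ D_{\boldsymbol{a}} K^x_{\boldsymbol{a},\tau}(t,s)|_{\alpha}\cdot h = \tau \int_s^{t} H^x_{\alpha,\tau}(u,s)\,\partial_y K^y_{\alpha,\tau}(t,u)\big|_{y=\varphi^{\alpha,\tau}_{u,s}(x)}\, h(u)\,du, \]
which follows from Duhamel's formula for the linearized ODE $\dot X = b(X)+\tau(\alpha+\epsilon h)$ combined with the semigroup factorization $K^x_\alpha(t,s) = H^x_\alpha(u,s)\,K^{\varphi^\alpha_{u,s}(x)}_\alpha(t,u)$ valid for $s\leq u\leq t$.

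The critical step is to collapse the source into $(\Xi_{\alpha,\tau}*h)(t)$ with $\Xi_{\alpha,\tau}(\sigma) := \tau\Xi_\alpha(\tau\sigma)\mathbf{1}_{\sigma\geq 0}$. Plugging the previous formula in with $x=0$, swapping the order of integration, and changing variables $a = \tau(u-s)$, the source reads
\[ \tau \int_{-\infty}^t h(u) \bigg[\gamma(\alpha)\int_0^\infty H_\alpha(a)\,\partial_y K^y_\alpha(\tau(t-u))\big|_{y=\varphi^\alpha_a(0)}\,da\bigg]\,du. \]
The crucial observation is a Palm-type identity: under the constant-current stationary dynamics, the push-forward of the age measure $\gamma(\alpha)H_\alpha(a)\,da$ by the map $a\mapsto\varphi^\alpha_a(0)$ equals $\nu^\infty_\alpha$, directly verifiable from the explicit form \eqref{eq:the invariant measure for alpha} via the substitution $y=\varphi^\alpha_a(0)$ with Jacobian $dy=(b(y)+\alpha)\,da$. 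Combined with the definition $\Xi_\alpha(t)=\int_0^\infty\partial_x K^x_\alpha(t)\,\nu^\infty_\alpha(dx)$ from \eqref{eq:expression de Xi_alpha}, the bracket collapses to $\Xi_\alpha(\tau(t-u))$, so the source equals $\int_{-\infty}^t \Xi_{\alpha,\tau}(t-u)h(u)\,du$.

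The equation for $\delta\rho$ then reads $\delta\rho = \Xi_{\alpha,\tau}*h + K_{\alpha,\tau}*\delta\rho$ in the causal-convolution sense. Since $r_{\alpha,\tau}$ is the resolvent of $K_{\alpha,\tau}$ (satisfying $r_{\alpha,\tau} = K_{\alpha,\tau} + K_{\alpha,\tau}*r_{\alpha,\tau}$ by \eqref{eq:the volterra integral equation}), inverting yields $\delta\rho = (\Xi_{\alpha,\tau}+r_{\alpha,\tau}*\Xi_{\alpha,\tau})*h$, and the $\tau$-scaled version of the identity $\Theta_\alpha = \Xi_\alpha + r_\alpha*\Xi_\alpha$ established in the proof of Lemma~\ref{lem:link between the derivative of J and Theta} (see \eqref{eq:link Theta Xi and r}) identifies this with $\Theta_{\alpha,\tau}*h$, completing the proof. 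The main obstacle is the rigorous justification of the Palm-type stationarity identity and the Fubini interchanges, especially in the boundary cases $\sigma_\alpha<\infty$ (integrable singularity of $\nu^\infty_\alpha$ near $\sigma_\alpha$) and $\sigma_\alpha=\infty$ (growth of $\partial_y K^y_\alpha$ at infinity); the exponential estimates from Lemma~\ref{lem:estimates on the kernels} together with the explicit form of $\nu^\infty_\alpha$ should handle both cases.
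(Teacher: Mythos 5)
Your overall route is the same as the paper's in its key reductions: chain rule to $D_hG(0,\alpha,\tau)\cdot h = h - J(\alpha)\,D_{\boldsymbol a}\rho_{\alpha,\tau}\cdot h$, identification of the linear response of the kernel with $\Xi_{\alpha,\tau}$, and the final identity $\Theta_{\alpha,\tau}=\Xi_{\alpha,\tau}+r_{\alpha,\tau}*\Xi_{\alpha,\tau}$ from \eqref{eq:link Theta Xi and r}. Where you differ is the middle: the paper computes $D_{\boldsymbol a}\rho_{\alpha,\tau}\cdot h$ through the invariant measure $\pi_{\boldsymbol a,\tau}$ of the phase Markov chain, i.e.\ via \eqref{eq:DaPi} and the explicit inverse of $I-K^{2\pi}_{\alpha,\tau}$ on $C^{0,0}_{2\pi}$ (Lemma~\ref{lem:inverse of linear operator I - K2pialpha}), and it imports the identity $\gamma(\alpha)\int_{-\infty}^t[D_{\boldsymbol a}K_\alpha\cdot h](t,s)\,ds=\Xi^{2\pi}_\alpha(h)$ from \cite{cormier2020meanfield} (Lemma~\ref{lem:link between DaKalphatau et Xi}); you instead linearize the renewal equation \eqref{eq:r tilde of beta and a} directly and re-derive that identity from scratch via Duhamel, the factorization $K^x_\alpha(t,s)=H^x_\alpha(u,s)K^{\varphi^\alpha_{u,s}(x)}_\alpha(t,u)$, and the push-forward identity $\gamma(\alpha)H_\alpha(a)\,da \mapsto \nu^\infty_\alpha$. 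That computation is correct (it is essentially the proof of eq.\ (72) in \cite{cormier2020meanfield}) and is a legitimate, more self-contained alternative.

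The genuine gap is the inversion step. On the whole line the operator $x\mapsto K_{\alpha,\tau}*x$ fixes the constants (since $\int_0^\infty K_{\alpha,\tau}=1$), so by Lemma~\ref{lem:space of solution is of dimension 1} the homogeneous equation $x=K_{\alpha,\tau}*x$ has a one-dimensional space of periodic solutions. Consequently $\delta\rho=\Xi_{\alpha,\tau}*h+K_{\alpha,\tau}*\delta\rho$ determines $\delta\rho$ only up to an additive constant: the resolvent gives you \emph{a} solution, $\Theta_{\alpha,\tau}*h$, not \emph{the} derivative. To close this you must pin down the mean: $D_{\boldsymbol a}\rho_{\alpha,\tau}\cdot h$ lies in $C^{0,0}_{2\pi}$ because the mean of $\rho_{\alpha+h,\tau}$ over a period is $\gamma(\alpha)$ for every $h\in C^{0,0}_{2\pi}$ (Proposition~\ref{prop:c alpha +h = c alpha}), and $\Theta_{\alpha,\tau}*h$ also has zero mean (using $\widehat{\Xi}_\alpha(0)=0$ and $h\in C^{0,0}_{2\pi}$); since $N(I-K^{2\pi}_{\alpha,\tau})$ consists of constants (Lemma~\ref{lem:kernel and range of I-K}), two zero-mean solutions coincide. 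Relatedly, "$r_{\alpha,\tau}*\Xi_{\alpha,\tau}*h$" needs care because $r_{\alpha,\tau}\notin L^1(\mathbb{R}_+)$: the composition must be read at the level of kernels, $r_{\alpha,\tau}*\Xi_{\alpha,\tau}=\Xi_{\alpha,\tau}+\tau\gamma(\alpha)\Psi_{\alpha,\tau}+\xi_{\alpha,\tau}*\Xi_{\alpha,\tau}-\Xi_{\alpha,\tau}$ with $\Psi_{\alpha,\tau}=1*\Xi_{\alpha,\tau}\in L^1$, which is exactly the decomposition $r^{2\pi}_{\alpha,\tau}=\tau\gamma(\alpha)\Gamma+\xi^{2\pi}_{\alpha,\tau}$ that Lemma~\ref{lem:inverse of linear operator I - K2pialpha} is designed to justify. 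With these two points supplied, your argument goes through.
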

\noindent The proof of this proposition relies on Lemmas~\ref{lem:inverse of linear 
	operator I - K2pialpha} and \ref{lem:link between DaKalphatau et Xi} below. Let $h 
	\in 
C^{0,0}_{2 \pi}$. The definition  of $G$ yields
\[  D_h G(0,\alpha, \tau) \cdot h = h - J(\alpha) D_{\boldsymbol{a}} \rho_{\alpha, \tau}  
\cdot h. \]
By equation \eqref{eq:link between tilde phi and tilde r}
and Proposition~\ref{prop:c alpha +h = c alpha}, one has
\[
D_{\boldsymbol{a}} \rho_{\alpha, \tau}  \cdot h = \frac{1}{c_\alpha}  D_{\boldsymbol{a}}
\pi_{\alpha, \tau}  \cdot h.
\]
Recall that $\pi_\alpha$ is the uniform law on $[0, 2\pi]$. To compute 
$D_{\boldsymbol{a}} 
\pi_{\alpha, \tau}  \cdot h$, we use \eqref{eq:d pi a} with 
$\boldsymbol{a} \equiv \alpha$:
\begin{equation}
	\label{eq:DaPi}
	D_{\boldsymbol{a}} \pi_{\alpha, \tau}  \cdot h = (I - K^{2 \pi}_{\alpha, \tau})^{-1} 
	\left[
	D_{\boldsymbol{a}} K^{2 \pi}_{\alpha,
		\tau} \cdot h \right]( \tfrac{1}{ 2 \pi}). 
\end{equation}
The next lemma is devoted to the computation of $(I - K^{2 \pi}_{\alpha, \tau})^{-1}$.
Consider $t \mapsto r_\alpha(t)$ the solution of the convolution Volterra integral 
equation
\eqref{eq:the volterra integral equation} (with $\nu = \delta_0$ and $\boldsymbol{a} =
\alpha$). That is, $r_\alpha$ solves $r_\alpha = K_\alpha + K_\alpha * r_\alpha$.
By \cite[Prop. 37]{CTV}, there exists a function $\xi_\alpha \in 
L^1(\mathbb{R}_+)$
such that for all $t \geq 0$, 
\[ r_\alpha(t) = \gamma(\alpha) + \xi_\alpha(t). \] 
Define for all $t \geq 0$, $r_{\alpha, \tau} (t) := \tau r_\alpha( \tau t)$. It solves
\begin{equation}
	\label{eq:volterra for r alpha tau}
	r_{\alpha, \tau} = K_{\alpha, \tau} + K_{\alpha, \tau} *  r_{\alpha, \tau},
\end{equation}
where $K_{\alpha, \tau}$ is given by \eqref{eq:K alpha tau}. Similarly, let $\xi_{\alpha, 
\tau}(t) := 
\tau \xi_{\alpha}(\tau t).$ We have
\[ r_{\alpha, \tau} (t) = \tau \gamma(\alpha) + \xi_{\alpha, \tau}(t). \]
Recall that by definition, we have
\[ K^{2 \pi}_{\alpha, \tau}(h) (t) = \int_0^{2\pi}{K^{2 \pi}_{\alpha, \tau}(t, s) h(s) ds} =  
\int_{-\infty}^t{ 
	K_{\alpha, \tau}(t-s) h(s) ds}. \]
and
	\[ 
	H^{2 \pi}_{\alpha, \tau}(h) (t) = \int_0^{2\pi}{H^{2 \pi}_{\alpha, \tau}(t, s) h(s) ds} 
	=  \int_{-\infty}^t{ 
		H_{\alpha, \tau}(t-s) h(s) ds}. 
	\]
\begin{lemma}
	\label{lem:inverse of linear operator I - K2pialpha}
	The inverse of the linear operator 
	$I - K^{2 \pi}_{\alpha,\tau}: C^{0,0}_{2 \pi} \rightarrow C^{0,0}_{2 \pi}$ is 
	given by $I + r^{2 \pi}_{\alpha, \tau}$ where for all $h \in C^{0,0}_{2 
		\pi}$ and $t \in [0, 2\pi]$
	\begin{align*}
		r^{2 \pi}_{\alpha, \tau} (h) &:= \tau \gamma(\alpha) \Gamma(h) +  \xi^{2 
			\pi}_{\alpha, \tau}(h), \\
		\Gamma(h)(t) &:= \int_0^t{ h(s) ds} - \frac{1}{2 
			\pi} 
		\int_0^{2 \pi}{ \int_0^s{h(u)du} ds}, \\
		\xi^{2 \pi}_{\alpha, \tau}(h)(t)  &:= \int_{- \infty}^t{\xi_{\alpha, \tau}(t 
			- s) h(s) ds }.
	\end{align*}
\end{lemma}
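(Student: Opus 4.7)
The plan is to check that $I + r^{2\pi}_{\alpha,\tau}$ defines a continuous linear operator from $C^{0,0}_{2\pi}$ into itself and satisfies $(I - K^{2\pi}_{\alpha,\tau})(I + r^{2\pi}_{\alpha,\tau}) = I$ on $C^{0,0}_{2\pi}$. Since Lemma~\ref{lem:kernel and range of I-K} shows that $\left.I\right|_{C^{0,0}_{2\pi}} - \left.K^{2\pi}_{\alpha,\tau}\right|_{C^{0,0}_{2\pi}}$ is already a bijection (its kernel equals $\mbox{\normalfont Span}(\pi_{\alpha,\tau})$, which is the space of constants and intersects $C^{0,0}_{2\pi}$ only at $0$), verifying this single algebraic identity pins down the inverse.

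First I would handle well-definedness. The primitive $\Gamma(h)$ is continuous and $2\pi$-periodic precisely because $\int_0^{2\pi}h=0$, and has zero mean on $[0,2\pi]$ by construction. For the other piece, since $\xi_{\alpha,\tau}\in L^1(\mathbb{R}_+)$ (by \cite[Prop. 37]{CTV} rescaled) and $h$ is bounded, the integral defining $\xi^{2\pi}_{\alpha,\tau}(h)(t)$ converges absolutely, is continuous by dominated convergence, and is $2\pi$-periodic by the change of variable $s\mapsto s+2\pi$. Its mean vanishes too, because Fubini and periodicity give
\[ \int_0^{2\pi} \xi^{2\pi}_{\alpha,\tau}(h)(t)\,dt = \int_0^\infty \xi_{\alpha,\tau}(u) \left( \int_0^{2\pi} h(t-u)\,dt \right) du = 0. \]

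The core computation is to show $K^{2\pi}_{\alpha,\tau}\bigl(r^{2\pi}_{\alpha,\tau}(h)\bigr) = r^{2\pi}_{\alpha,\tau}(h) - K^{2\pi}_{\alpha,\tau}(h)$, which I would prove by treating the two summands of $r^{2\pi}_{\alpha,\tau}$ separately. For the $\Gamma$-part, since $K_{\alpha,\tau} = -H'_{\alpha,\tau}$ the antiderivative of $u\mapsto K_{\alpha,\tau}(t-u)$ is $u\mapsto H_{\alpha,\tau}(t-u)$; integrating by parts with $H_{\alpha,\tau}(0)=1$ and using the exponential decay from Lemma~\ref{lem:estimates on the kernels} to kill the boundary term at $-\infty$ (while $\Gamma(h)$ stays bounded), I expect
\[ \int_{-\infty}^t K_{\alpha,\tau}(t-u) \Gamma(h)(u)\,du = \Gamma(h)(t) - H^{2\pi}_{\alpha,\tau}(h)(t). \]
For the $\xi$-part, Fubini gives $K^{2\pi}_{\alpha,\tau}\bigl(\xi^{2\pi}_{\alpha,\tau}(h)\bigr)(t) = \int_{-\infty}^t (K_{\alpha,\tau}\ast \xi_{\alpha,\tau})(t-s)\, h(s)\,ds$, and combining the Volterra equation \eqref{eq:volterra for r alpha tau}, the decomposition $r_{\alpha,\tau} = \tau\gamma(\alpha) + \xi_{\alpha,\tau}$, and $K_{\alpha,\tau}\ast 1 = 1 - H_{\alpha,\tau}$ from \eqref{eq:1*K a tau}, I get
\[ K_{\alpha,\tau}\ast \xi_{\alpha,\tau} = \tau\gamma(\alpha) H_{\alpha,\tau} + \xi_{\alpha,\tau} - K_{\alpha,\tau}. \]
Summing the two contributions, the $\tau\gamma(\alpha) H^{2\pi}_{\alpha,\tau}(h)$ terms cancel and the desired identity drops out.

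The main obstacle is the $\Gamma$-term: the naive resolvent formula $\int_{-\infty}^t r_{\alpha,\tau}(t-s) h(s)\,ds$ diverges because $r_{\alpha,\tau}\to\tau\gamma(\alpha)>0$, so one must replace the formal primitive of $h$ by its mean-zero version. What makes this replacement work is exactly the interplay between $h$ being mean-zero (so its antiderivative is honestly $2\pi$-periodic and bounded) and the exponential decay of $H_{\alpha,\tau}$, which ensures the integration-by-parts boundary term at $-\infty$ vanishes. Once this step is in place, the rest is bookkeeping with the Volterra resolvent identity.
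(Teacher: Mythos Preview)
Your proposal is correct and follows essentially the same route as the paper: the well-definedness argument is identical, and your verification of $K^{2\pi}_{\alpha,\tau}\bigl(r^{2\pi}_{\alpha,\tau}(h)\bigr) = r^{2\pi}_{\alpha,\tau}(h) - K^{2\pi}_{\alpha,\tau}(h)$ via integration by parts on the $\Gamma$-part and the Volterra identity \eqref{eq:volterra for r alpha tau} on the $\xi$-part is exactly what the paper does. The only organizational difference is that the paper checks \emph{both} compositions $K^{2\pi}_{\alpha,\tau}\circ r^{2\pi}_{\alpha,\tau}$ and $r^{2\pi}_{\alpha,\tau}\circ K^{2\pi}_{\alpha,\tau}$ directly, whereas you invoke the bijectivity already established after Lemma~\ref{lem:kernel and range of I-K} to argue that a one-sided inverse suffices; this is a legitimate and slightly more economical shortcut.
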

\begin{proof}
	Note that $\Gamma(h)$ is the only primitive of $h$ which belongs to $C^{0,0}_{2 
		\pi}$. Moreover,  
	because $t \mapsto 
	\xi_{\alpha, \tau}(t) \in L^1(\mathbb{R}_+)$, we have for $h \in C^{0,0}_{2 \pi}$:
	\[
	\int_0^{2\pi}\int_{- \infty}^t{\xi_{\alpha, \tau}(t 
		- s) h(s) ds }dt = \int_0^{2\pi}\int_{0}^{\infty}{\xi_{\alpha, \tau}(u) h(t-u) du }dt = 
	\int_{0}^{\infty}\int_0^{2\pi}{\xi_{\alpha, \tau}(u) h(t-u) dt }du = 0.
	\]
	So $\xi^{2 \pi}_{\alpha, \tau}(h) \in C^{0,0}_{2 \pi}$. Altogether, $r^{2 
	\pi}_{\alpha, \tau} \in 
		C^{0,0}_{2 
			\pi}$. 
	To conclude, we have to show that on $C^{0,0}_{2 \pi}$
	\[  K^{2 \pi}_{\alpha,\tau} \circ r^{2 \pi}_{\alpha, \tau}   = r^{2 \pi}_{\alpha, \tau} \circ 
	K^{2 
		\pi}_{\alpha,\tau} = 
	r^{2 \pi}_{\alpha, \tau} - K^{2 \pi}_{\alpha,\tau}.  \]
	Note that for all $t \in [0, 2 \pi]$,
	\[ \frac{d}{dt} \left[ \Gamma(h)(t) - H^{2 \pi}_{\alpha, \tau}(h)(t)\right] = K^{2 
	\pi}_{\alpha, \tau}(h)(t). 
	\]
	Because $\Gamma(h), H^{2 \pi}_{\alpha, \tau}(h) \in C^{0,0}_{2 \pi}$, we deduce 
	that
	\[ \Gamma(K^{2 \pi}_{\alpha, \tau}(h)) = \Gamma(h) - H^{2 \pi}_{\alpha, \tau}. \]
	Moreover, we have (using that $\xi_{\alpha, \tau}, K_{\alpha, \tau} \in 
	L^1(\mathbb{R}_+)$)
	\begin{align*}
		\xi^{2 \pi}_{\alpha, \tau}(K^{2 \pi}_{\alpha,\tau}  (h))(t) &= \int_{-\infty}^t{ 
		\xi_{\alpha, \tau}(t -
			s)  \int_{-\infty}^s{ K_{\alpha, \tau}(s - u) h(u) du ds }} \\
		&= \int_{-\infty}^t{ h(u) 
			\int_u^t{ \xi_{\alpha, \tau} (t-s) K_{\alpha, \tau}
				(s-u) ds} du	} \\
		&= \int_{-\infty}^t{ h(u) (\xi_{\alpha, \tau} * K_{\alpha, \tau})(t - u)  du.	}
	\end{align*}
	Using \eqref{eq:1*K a tau} and \eqref{eq:volterra for r alpha tau}, we deduce the 
	identity
	\begin{equation}
		\label{eq:K*xi}
		K_{\alpha, \tau} * \xi_{\alpha, \tau} = \xi_{\alpha, \tau} *  K_{\alpha, \tau} = 
		\xi_{\alpha, 
			\tau} -  K_{\alpha, \tau} + \tau 
		\gamma(\alpha ) 
		H_{\alpha, \tau} .
	\end{equation}
	So
	\[  \xi^{2 \pi}_{\alpha, \tau}(K^{2 \pi}_{\alpha,\tau}  (h)) = \xi^{2 \pi}_{\alpha, \tau} (h) 
	- K^{2 
		\pi}_{\alpha,\tau} (h) + \tau \gamma(\alpha) H^{2
		\pi}_{\alpha,\tau} (h). \]
	Altogether, 
	\[ r^{2 \pi}_{\alpha, \tau} (K^{2 \pi}_{\alpha,\tau}(h)) = r^{2 \pi}_{\alpha, \tau} (h) - 
	K^{2 
		\pi}_{\alpha,\tau} (h). \]
	We now prove that $K^{2 \pi}_{\alpha,\tau} ( r^{2 \pi}_{\alpha, \tau}(h)) = r^{2 
	\pi}_{\alpha, 
		\tau}(h) 
	- K^{2 
		\pi}_{\alpha,\tau}(h)$. Using \eqref{eq:K*xi}, we have $K^{2 \pi}_{\alpha, 
		\tau}(\xi^{2 
		\pi}_{\alpha,\tau}  (h)) = \xi^{2 \pi}_{\alpha, \tau}(K^{2 \pi}_{\alpha,\tau}  (h))  $. 
	Moreover, because $K^{2 \pi}_{\alpha, \tau}(1) = 1$, we have
	\begin{align*}
		K^{2 \pi}_{\alpha, \tau}(\Gamma(h)) (t) &= \int_{-\infty}^t{ K_{\alpha, \tau}(t-s) 
		\int_0^s{ 
				h(u) du} ds} 
		- \frac{1}{2 
			\pi} 
		\int_0^{2 \pi}{ \int_0^s{h(u) du} ds} \\
		&= \left[ H_{\alpha, \tau}(t-s) \int_0^s{ h(u) du} \right]^t_{-\infty} - 
		\int^t_{-\infty}{  
			H_{\alpha, 
				\tau}(t-s) h(s)ds} - 
		\frac{1}{2 
			\pi} 
		\int_0^{2 \pi}{ \int_0^s{h(u) du} ds} \\
		&= \Gamma(h)(t) - H^{2 \pi}_{\alpha, \tau}(h)(t) = \Gamma(K^{2 \pi}_{\alpha, 
		\tau}(h)) (t).
	\end{align*}
	It ends the proof.
\end{proof}
	Let $(\alpha, \tau) \in (\alpha_0 - \eta_0, \alpha_0 + \eta_0) \times (\tau_0 - 
	\epsilon_0, \tau_0 + \epsilon_0)$ and $h \in C^{0,0}_{2 \pi}$. 
	Using \eqref{eq:link between tilde phi and tilde r} and Proposition~\ref{prop:c 
	alpha +h = 
		c alpha}, it holds
	that
\begin{equation}
	\label{eq:first step to compute the differentiate of rho}
	D_{\boldsymbol{a}} \rho_{\alpha, \tau}  \cdot h = \frac{1}{c_{\alpha, \tau}}  
	D_{\boldsymbol{a}}
	\pi_{\alpha, \tau}  \cdot h  \overset{\eqref{eq:DaPi}}{=} (I + r^{2 \pi}_{\alpha, \tau}) 
	\left[ D_{\boldsymbol{a}} 
	K^{2
		\pi}_{\alpha,\tau} \cdot h \right] (\gamma(\alpha)).
\end{equation}
Consider $\Xi_\alpha(t)$ be defined by \eqref{eq:definition de Xi alpha} 
and define for all 
$t \geq 0$, 
$\Xi_{\alpha, \tau}(t) := \tau \Xi_\alpha(\tau t)$. We also denote by $\Xi^{2 
	\pi}_{\alpha, \tau}$ the linear operator
\[ \forall h \in C^0_{2 \pi}, \forall t \in [0, 2\pi], \quad \Xi^{2 \pi}_{\alpha, \tau}(h)(t) := 
\int_{-\infty}^t{ 
	\Xi_{\alpha, \tau}(t-u)h(u) 
	du }. \]
\begin{lemma}
	\label{lem:link between DaKalphatau et Xi}
	For all  $h \in C^{0}_{2 \pi}$ we have $\left[ D_{\boldsymbol{a}} K^{2
		\pi}_{\alpha,\tau} \cdot h \right] (\gamma(\alpha)) = \Xi^{2 \pi}_{\alpha, \tau}(h)$.
\end{lemma}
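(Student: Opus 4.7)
The plan is to directly compute the operator $D_{\boldsymbol{a}} K^{2\pi}_{\alpha,\tau} \cdot h$ applied to the constant function $\gamma(\alpha)$, and identify the resulting convolution with $\Xi^{2\pi}_{\alpha,\tau}(h)$. The first simplification comes from the fact that the target is a constant function: using the series definition \eqref{eq:definition of H 2pi a tau} of $K^{2\pi}_{\boldsymbol{a},\tau}$ and the change of variables $u = s - 2\pi k$ under the sum, one has, for any $c \in \mathbb{R}$,
\[ K^{2\pi}_{\boldsymbol{a},\tau}(c)(t) = c \int_{-\infty}^t K_{\boldsymbol{a},\tau}(t,u)\, du. \]
By Lemma~\ref{lem: C2 regularity of K and H}, the map $\boldsymbol{a} \mapsto K^{2\pi}_{\boldsymbol{a},\tau}$ is $\mathcal{C}^2$, so differentiating at $\boldsymbol{a} \equiv \alpha$ with $c = \gamma(\alpha)$ yields
\[ [D_{\boldsymbol{a}} K^{2\pi}_{\alpha,\tau} \cdot h](\gamma(\alpha))(t) = \gamma(\alpha) \int_{-\infty}^t D_{\boldsymbol{a}} K_{\alpha,\tau}(t,u) \cdot h \, du. \]

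Second, I would compute $D_{\boldsymbol{a}} K_{\alpha,\tau}(t,u) \cdot h$ from the explicit expression \eqref{eq:K alpha tau}. The linearization of \eqref{eq:definition du flow} gives
\[ D_{\boldsymbol{a}} \varphi^{\alpha,\tau}_{\theta,u}(0) \cdot h = \int_u^\theta \tau h(v) \exp\!\Bigl( \tau \int_v^\theta b'(\varphi^\alpha_{\tau(w-u)}(0))\, dw \Bigr) dv. \]
Applying the chain rule to $K_{\alpha,\tau}(t,u) = \tau f(\varphi^{\alpha,\tau}_{t,u}(0)) H_{\alpha,\tau}(t,u)$ and then swapping the order of integration in the double integral coming from the $H$-term yields a representation $D_{\boldsymbol{a}} K_{\alpha,\tau}(t,u) \cdot h = \int_u^t A_\tau(t,u,v) h(v)\, dv$, with $A_\tau(t,u,v)$ explicit in $f$, $f'$, $b'$ and the flow $\varphi^\alpha$. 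A second Fubini then rewrites
\[ \gamma(\alpha) \int_{-\infty}^t D_{\boldsymbol{a}} K_{\alpha,\tau}(t,u) \cdot h \, du = \int_{-\infty}^t h(v) \Bigl( \gamma(\alpha) \int_{-\infty}^v A_\tau(t,u,v)\, du \Bigr) dv. \]

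It remains to identify the inner bracket with $\Xi_{\alpha,\tau}(t-v)$, which is the crucial step. I would perform the change of variables $s := \tau(v-u) \in [0,\infty)$ and then $x := \varphi^\alpha_s(0) \in [0, \sigma_\alpha)$. From \eqref{eq:the invariant measure for alpha}, this second substitution has Jacobian $(b(x)+\alpha)^{-1}$ and transports $\gamma(\alpha) H_\alpha(s)\, ds$ onto $\nu^\infty_\alpha(x)\, dx$. Using the semigroup identity $\varphi^\alpha_{\tau(t-v)}(\varphi^\alpha_s(0)) = \varphi^\alpha_{s + \tau(t-v)}(0)$, the integrand $A_\tau$ under these substitutions coincides with $\tau \frac{d}{dx} K^x_\alpha(\tau(t-v))$, as obtained by differentiating the explicit formula \eqref{eq:explicit expression K} with $\nu = \delta_x$ with respect to $x$. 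The definition \eqref{eq:expression de Xi_alpha} then gives
\[ \gamma(\alpha) \int_{-\infty}^v A_\tau(t,u,v)\, du = \tau \int_0^{\sigma_\alpha} \tfrac{d}{dx} K^x_\alpha(\tau(t-v))\, \nu^\infty_\alpha(x)\, dx = \tau \Xi_\alpha(\tau(t-v)) = \Xi_{\alpha,\tau}(t-v), \]
which concludes the proof.

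The main obstacle is the bookkeeping in the second and third steps: three distinct time scales interact (the time of the external current, the flow time, and the $\tau$-rescaling), and one must carefully track them across the two Fubini interchanges. Their integrability is ensured by the exponential decay estimates of $K_{\alpha,\tau}$ and $H_{\alpha,\tau}$ from Lemma~\ref{lem:estimates on the kernels}, so the analytical content is concentrated in the algebraic identification of $A_\tau$ with a derivative in the initial condition of $K^x_\alpha$, which is ultimately what makes the formula for $\Xi_\alpha$ appear.
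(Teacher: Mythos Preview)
Your argument is correct and self-contained, but it follows a different path from the paper. The paper does not compute $D_{\boldsymbol{a}} K_{\alpha,\tau}$ directly; instead it first reduces to $\tau=1$, then invokes an identity from \cite{cormier2020meanfield} (eq.~(72) there) for the survival kernel,
\[
\gamma(\alpha)\int_{-\infty}^t [D_{\boldsymbol{a}} H_\alpha\cdot h](t,s)\,ds=-\int_{\mathbb{R}}\Psi_\alpha(t-s)h(s)\,ds,
\]
and obtains the statement for $K$ by time-differentiation using $K_{\boldsymbol{a}}(t,s)=-\tfrac{d}{dt}H_{\boldsymbol{a}}(t,s)$ and $\Xi_\alpha=\Psi_\alpha'$; the case $\tau\neq1$ is then recovered by the scaling $\tilde f=\tau f$, $\tilde b=\tau b$, $\tilde\alpha=\tau\alpha$. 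Your approach instead differentiates $K_{\alpha,\tau}$ explicitly, performs two Fubini swaps, and identifies the inner integral with $\Xi_{\alpha,\tau}(t-v)$ via the change of variables $s=\tau(v-u)$, $x=\varphi^\alpha_s(0)$, using the semigroup property of the flow and the factorization $H_\alpha(s+T)=H_\alpha(s)H^x_\alpha(T)$. The paper's route is shorter on the page because the heavy lifting is outsourced to \cite{cormier2020meanfield}, and the scaling trick avoids carrying $\tau$ through the algebra; your route is longer but has the advantage of being self-contained and of making transparent why $\Xi_\alpha=\int\tfrac{d}{dx}K^x_\alpha\,\nu^\infty_\alpha(dx)$ is exactly the object that appears.
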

\begin{proof}
	Given $h \in C^{0}_{2 \pi}$, we have
	\begin{align*}
		\left[ D_{\boldsymbol{a}} K^{2 \pi}_{\alpha,\tau} \cdot h  \right] (\gamma(\alpha)) 
		(t) = 
		\gamma(\alpha)  \int_{-\infty}^t{ \left[ D_{\boldsymbol{a}} K_{\alpha, \tau} \cdot  h
			\right] (t, s) ds }
	\end{align*}
	So we have to prove that
	\begin{equation} \forall h \in C^{0}_{2 \pi},\quad \gamma(\alpha)  \int_{-\infty}^t{ 
	\left[ 
			D_{\boldsymbol{a}}
			K_{\alpha, \tau}
			\cdot  h \right] (t, s) ds } = \int_{-\infty}^t{ \Xi_{\alpha, \tau}( t-s)  h(s) ds}. 
		\label{eq:link DaKtau Xiatau}
	\end{equation}
When $\tau = 1$,  we know by Lemma~\ref{lem:link between H et tilde 
	H 
			and so on} that
		\(K_{\alpha, 1} = K_{\alpha}\), \(H_{\alpha, 1} = H_{\alpha}\), etc..
		In  \cite{cormier2020meanfield}, eq. (72) 	
		gives
		\[ \gamma(\alpha) \int_{-\infty}^t{ \left[ D_{\boldsymbol{a}} H_{\alpha}
			\cdot  h \right] (t, s) ds }  = -\int_{\mathbb{R}}{ \Psi_{\alpha}( t-s)  h(s) ds}, \]
		where $\Psi_\alpha(t)$ is given by \eqref{eq:une formule pour Psi_alpha}. 
		Using that $\Psi_\alpha(0) = 0$, $\Xi_\alpha(t) \overset{\eqref{eq:definition de 
		Xi alpha}}{=} 
		\frac{d}{dt} \Psi_\alpha(t)$ and
		\[   \int_{-\infty}^t{ \left[ D_{\boldsymbol{a}} K_{\alpha}
			\cdot  h \right] (t, s) ds } = -\frac{d}{dt}  \int_{-\infty}^t{ \left[ 
			D_{\boldsymbol{a}}
			H_{\alpha}
			\cdot  h \right] (t, s) ds }, \]
		we deduce \eqref{eq:link DaKtau Xiatau} with $\tau = 1$. The result for $\tau 
		\neq 1$ can be deduced 
	from 
	the case $\tau = 1$. Indeed, given $\alpha
	>0$ and $h \in
	C^{0}_{2 \pi}$, define $\tilde{f} := \tau f$, $\tilde{b} := \tau b$, $\tilde{\alpha} := 
	\tau \alpha$,
	and $\tilde{h} := \tau h$. By applying the result for $\tilde{\tau} := 1$, $\tilde{b}$,
	$\tilde{f}$, $\tilde{\alpha}$ and $\tilde{h}$, we obtain exactly the stated equality.
\end{proof}
\begin{proof}[Proof of Proposition~\ref{prop:the differential of G at x = 0}]
	We use Lemma~\ref{lem:link between DaKalphatau et Xi} together with 
	\eqref{eq:first step
		to compute the differentiate of rho}.
	For all $h \in C^{0,0}_{2 \pi}$, one obtains
	\[ D_{\boldsymbol{a}} \rho_{\alpha, \tau}  \cdot h = \Xi^{2 \pi}_{\alpha, \tau} (h) + 
	r^{2 \pi}_{\alpha,
		\tau}(\Xi^{2 \pi}_{\alpha,
		\tau} (h)).  \]
	The definition of $r^{2 \pi}_{\alpha, \tau}$ yields
	\[ r^{2 \pi}_{\alpha, \tau}(\Xi^{2 \pi}_{\alpha, \tau} (h)) = \tau \gamma(\alpha) 
	\Gamma(\Xi^{2 
		\pi}_{\alpha, \tau} (h)) 
	+  \xi^{2 \pi}_{\alpha, \tau} (\Xi^{2 \pi}_{\alpha, \tau} (h)). \]
	Let $\Psi_{\alpha, \tau}(t) := \Psi_\alpha(\tau t)$, such that $\frac{d}{dt} 
	\Psi_{\alpha, \tau}(t) = 
	\Xi_{\alpha, \tau}(t)$. From the identity
	\[ \frac{d}{dt} \int_{-\infty}^t{ \Psi_{\alpha, \tau}(t - u) h(u) du} = \int_{-\infty}^t{ 
	\Xi_{\alpha, \tau}(t - 
		u) h(u) du}, \]
	we find that
	\[ \Gamma(\Xi^{2 \pi}_{\alpha, \tau}( h) ) (t) = \int_{-\infty}^t{ \Psi_{\alpha, \tau}(t-u) 
	h(u) du} = 
	\int_{-\infty}^t{ (1 * \Xi_{\alpha, \tau})(t-u) h(u) du }. \]
	So
	\begin{align*}
		\left[ D_{\boldsymbol{a}} \rho_{\alpha, \tau}  \cdot  h \right](t) &= 
		\int_{-\infty}^t{ \Xi_{\alpha, \tau}(t-u) 
			h(u) du} + \tau \gamma(\alpha) \int_{-\infty}^t{ (1 * \Xi_{\alpha, \tau})(t-u) 
			h(u) du} \\
		& \quad + \int_{-\infty}^t{ 
			\xi_{\alpha, \tau}(t-u) \int_{-\infty}^u{ \Xi_{\alpha, \tau}(u-\theta) h(\theta) 
			d\theta} du}. 
	\end{align*}
	Fubini's Theorem yields
	\[ \int_{-\infty}^t{ 
		\xi_{\alpha, \tau}(t-u) \int_{-\infty}^u{ \Xi_{\alpha, \tau}(u-\theta) h(\theta) 
		d\theta} du}  = 
	\int_{-\infty}^t{ 
		(\xi_{\alpha, \tau} * \Xi_{\alpha, \tau})(t-\theta) h(\theta) d\theta }. \]
	Finally, we have
	\begin{align*}
		\Xi_{\alpha, \tau} + \tau \gamma(\alpha) (1 * \Xi_{\alpha, \tau}) + \xi_{\alpha, \tau} 
		* \Xi_{\alpha, \tau} &= 
		\Xi_{\alpha, \tau} + r_{\alpha, \tau} * \Xi_{\alpha, \tau} \quad \text{ (because 
		$r_{\alpha, \tau} = \tau 
			\gamma(\alpha) + \xi_{\alpha, \tau}$)  }\\
		& \overset{\eqref{eq:link Theta Xi and r}}{=} \Theta_{\alpha, \tau},
	\end{align*}
	so
	\[ \left[ D_{\boldsymbol{a}} \rho_{\alpha, \tau}  \cdot  h \right](t) = \int_{-\infty}^t{ 
	\Theta_{\alpha, \tau} 
		(t-u) h(u) du}. \]
	It ends the proof.
\end{proof}

\subsection{The linearization of  \texorpdfstring{$G$}{G} at  \texorpdfstring{$(0, 
		\alpha_0, \tau_0)$}{(0, a0,t0)} is a Fredholm operator}
\label{sec:fredholm operator}
For notational convenience we now write
\[ 
B_0 := D_h G(0,\alpha_0, \tau_0). 
\]
\begin{proposition}
	\label{prop:ker and im of B0}
	We have  $N(B_0) = R(Q),~ R(B_0) = N(Q)$,  where $Q$ is the following projector 
	on 
	$C^{0,0}_{2 \pi}$:
	\begin{equation}
		\label{eq:definition projector Q}
		\forall z \in C^{0,0}_{2 \pi},\quad Q(z) (t) := \left[ \frac{1}{2 \pi} \int_0^{2 \pi} { 
		z(s) 
			e^{-i s}
			ds} \right] e^{i t} + \left[ \frac{1}{2 \pi} \int_0^{2 \pi} { z(s) e^{i s} ds} \right] 
			e^{-i t}.
	\end{equation}
\end{proposition}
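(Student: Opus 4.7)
The plan is to exploit the convolution structure from Proposition~\ref{prop:the differential of G at x = 0} and diagonalize $B_0$ by Fourier series on $C^{0,0}_{2\pi}$. Write $B_0 = I - C_0$, where
\[ (C_0 h)(t) := J(\alpha_0)\int_{\mathbb{R}} \Theta_{\alpha_0,\tau_0}(t-s) h(s)\,ds = J(\alpha_0)\int_0^\infty \Theta_{\alpha_0,\tau_0}(v) h(t-v)\,dv. \]
By Lemma~\ref{lem:estimates on the kernels} and the theorem preceding Assumption~\ref{as:linear drift} (i.e.\ the statement of the stability theorem from \cite{cormier2020meanfield}), $\Theta_{\alpha_0}$ decays exponentially and so $\Theta_{\alpha_0,\tau_0}\in L^1(\mathbb{R}_+)$. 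Using periodicity of $h$, the operator $C_0$ can be rewritten as an integral operator on $[0,2\pi]$ with continuous kernel $M(t-s)$ where $M(w):= J(\alpha_0)\sum_{k\ge 0}\Theta_{\alpha_0,\tau_0}(w+2\pi k)$, and an Arzelà–Ascoli argument analogous to Lemma~\ref{lem:compact operator} shows $C_0$ is compact on $C^{0,0}_{2\pi}$. Hence $B_0 = I - C_0$ is Fredholm of index $0$.

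Next I compute the action of $B_0$ on the Fourier modes $e_n(t):=e^{int}$, $n\in\mathbb{Z}\setminus\{0\}$. A direct change of variables gives
\[ \int_0^\infty \Theta_{\alpha_0,\tau_0}(v) e^{-inv}\,dv = \widehat{\Theta}_{\alpha_0}\!\left(\tfrac{in}{\tau_0}\right), \]
so $B_0 e_n = \mu_n e_n$ with $\mu_n := 1 - J(\alpha_0)\widehat{\Theta}_{\alpha_0}(in/\tau_0)$. By Assumption~\ref{ass:i / b0 roots of the characteristic equation}, $\mu_1 = 0$; since $\Theta_{\alpha_0}$ is real-valued, taking complex conjugate gives $\widehat{\Theta}_{\alpha_0}(-i/\tau_0) = \overline{\widehat{\Theta}_{\alpha_0}(i/\tau_0)}$ and so $\mu_{-1}=0$ as well. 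By the non-resonance Assumption~\ref{ass:nonresonance condition}, $\mu_n \neq 0$ for all $n\in\mathbb{Z}\setminus\{-1,0,1\}$.

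For the kernel: any $h\in N(B_0)\subset C^{0,0}_{2\pi}$ has Fourier coefficients satisfying $\mu_n \hat h(n) = 0$, hence $\hat h(n)=0$ for $n\notin\{-1,1\}$ (recall $\hat h(0)=0$ since $h\in C^{0,0}_{2\pi}$). A continuous $2\pi$-periodic function whose Fourier spectrum is contained in $\{-1,1\}$ equals the trigonometric polynomial $\hat h(1)e^{it}+\hat h(-1)e^{-it}$ (e.g. by Fejér's theorem), so $h = Q(h) \in R(Q)$. Conversely, $e^{\pm it}\in N(B_0)$ since $\mu_{\pm 1}=0$, so $R(Q)\subset N(B_0)$. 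This gives $N(B_0)=R(Q)$, a real $2$-dimensional subspace.

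For the range: if $y = B_0 h$, then $\hat y(\pm 1) = \mu_{\pm 1}\hat h(\pm 1)=0$, so $Q(y) = 0$, giving $R(B_0)\subset N(Q)$. Since $B_0$ is Fredholm of index $0$ and $\dim N(B_0) = \dim R(Q) = 2$, the range $R(B_0)$ is closed with codimension $2$ in $C^{0,0}_{2\pi}$. As $Q$ is a continuous projector of rank $2$, $N(Q)$ also has codimension $2$. The inclusion $R(B_0)\subset N(Q)$ between two closed subspaces of the same codimension $2$ forces equality $R(B_0)=N(Q)$.

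The main obstacle I anticipate is the recovery of $h$ from its Fourier data in the sup-norm setting: the continuous functions with spectrum in $\{-1,1\}$ must be shown to be trigonometric polynomials, and the compactness of $C_0$ must be justified carefully (it relies crucially on the integrability of $\Theta_{\alpha_0}$ provided by Assumption~\ref{hyp:alpha is non degenerate} through Lemma~\ref{lem:estimates on the kernels}); the remaining arguments are then standard Fredholm theory.
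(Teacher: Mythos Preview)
Your argument is correct, but it takes a genuinely different route from the paper for the inclusion $N(Q)\subset R(B_0)$. The paper does \emph{not} invoke compactness of the convolution operator nor Fredholm index; instead, given $h\in N(Q)$ it explicitly constructs a preimage. Writing $\epsilon_n := J(\alpha_0)\widehat{\Theta}_{\alpha_0,\tau_0}(in)/\big(1-J(\alpha_0)\widehat{\Theta}_{\alpha_0,\tau_0}(in)\big)$ for $|n|\ge 2$, the paper sets $w(t):=\sum_{|n|\ge 2}\epsilon_n\tilde h_n e^{int}$ and checks that $B_0(h+w)=h$. The crux is that $w\in C^{0,0}_{2\pi}$: the paper uses that $\Theta_{\alpha_0,\tau_0}$ is $\mathcal{C}^1$ with derivative in $L^1$ (via \eqref{eq:08-20-21} and results from \cite{cormier2020meanfield}) to get $|\epsilon_n|\le C/|n|$, whence $w\in H^1([0,2\pi])\hookrightarrow C^0$. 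Your codimension-matching argument via Fredholm index is cleaner and requires only $\Theta_{\alpha_0,\tau_0}\in L^1$, whereas the paper's explicit construction needs the extra regularity of $\Theta$ but has the advantage of producing the inverse on $N(Q)$ concretely.

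One small inaccuracy: the periodized kernel $M$ you introduce is generally \emph{not} continuous on the torus, because $\Theta_{\alpha_0}(0)=\int f'(x)\,\nu^\infty_{\alpha_0}(dx)$ is typically nonzero, so $M$ has a jump at the diagonal $t=s$. This does not hurt you: with $\Theta_{\alpha_0,\tau_0}\in L^1(\mathbb{R}_+)$, approximate it in $L^1$ by continuous compactly supported functions; the associated convolution operators have continuous kernels (hence are compact by the argument of Lemma~\ref{lem:compact operator}) and converge to $C_0$ in operator norm on $C^{0,0}_{2\pi}$, so $C_0$ is compact. You should phrase the compactness justification this way rather than asserting continuity of $M$.
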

\begin{remark}
	In particular, $B_0 \in \mathcal{L}(C^{0,0}_{2 \pi}, C^{0,0}_{2 \pi})$  is a Fredholm 
	operator of index 0, with $\text{dim } N(B_0) = 2$.
\end{remark}
\begin{proof}
	First, let $h \in N(B_0)$. One has for all $t \in \mathbb{R}$
	\[ h(t) = J(\alpha_0) \int_{\mathbb{R}}{ \Theta_{\alpha_0, \tau_0}(t - s) h(s) ds}.  \]
	Consider for all $n \in \mathbb{Z}$
	\[ \tilde{h}_n := \frac{1}{2 \pi} \int_0^{2 \pi}{ h(s) e^{- in s} ds} \]
	the $n$-th Fourier coefficient of $h$. We have
	\[ \forall  n \in \mathbb{Z},  \quad \tilde{h}_n = J(\alpha_0) 
	\widehat{\Theta}_{\alpha_0, \tau_0}(i n) \tilde{h}_n. \]
	Assumption \ref{ass:nonresonance condition} ensures that
	\[ \forall n \in \mathbb{Z}  \backslash \{ -1, 1\}, \quad J(\alpha_0) 
	\widehat{\Theta}_{\alpha_0, \tau_0}(i n)  \neq 1,\]
	and so
	\[  \forall n \in \mathbb{Z}  \backslash \{ -1, 1\}, \quad  \tilde{h}_n = 0. \]
	We deduce that $h \in R(Q)$. Conversely, if $h \in R(Q)$, there exists $c \in 
	\mathbb{C}$ such that
	\[ h(t) = c e^{it } + \bar{c} e^{-it} \]
	and so
	\begin{align*}
		J(\alpha_0)  \int_{\mathbb{R}}{ \Theta_{\alpha_0, \tau_0}(t - s) h(s) ds} & = c 
		e^{it}  J(\alpha_0) \int_{\mathbb{R}}{ \Theta_{\alpha_0, \tau_0}(s) e^{-is} ds} + 
		\bar{c} e^{-it } J(\alpha_0)  \int_{\mathbb{R}}{ \Theta_{\alpha_0, \tau_0}(s) e^{is} 
		ds} \\
		& = c e^{it}   J(\alpha_0) \widehat{\Theta}_{\alpha_0, \tau_0}(i) + \bar{c} e^{-it }  
		J(\alpha_0) \widehat{\Theta}_{\alpha_0, \tau_0}(-i) \\
		& = h(t).
	\end{align*}
	We used here that $J(\alpha_0) \widehat{\Theta}_{\alpha_0, \tau_0}(i)  = 
	J(\alpha_0)
	\widehat{\Theta}_{\alpha_0, \tau_0}(-i) = 1$ (Assumption ~\ref{ass:i / b0 roots of 
	the
		characteristic equation}).
	This proves that $N(B_0) = R(Q)$.  Consider now $k \in R(B_0)$, there exists 
	$h \in 
		C^{0,0}_{2 \pi}$ such that $B_0(h) = k$. We have for all $t \in \mathbb{R}$
	\[ 
	h(t) -  J(\alpha_0) \int_{\mathbb{R}}{ \Theta_{\alpha_0, \tau_0}(t - s) h(s)}ds = k(t). 
	\]
	Using that $J(\alpha_0) \widehat{\Theta}_{\alpha_0, \tau_0}(i) = 1$, we  deduce 
	that
	\[ \frac{1}{2 \pi} \int_{0}^{2 \pi}{ k(s) e^{-is } ds} = \left[ \frac{1}{2 \pi} \int_{0}^{2 
	\pi}{ h(s) e^{-is } ds} \right](1 - J(\alpha_0) \widehat{\Theta}_{\alpha_0, \tau_0}(i) ) 
	= 0. \]
	Similarly, $\frac{1}{2 \pi} \int_{0}^{2 \pi}{ k(s) e^{is } ds} = 0$ and so $k \in N(Q)$. It
	remains to show that $N(Q) \subset R(B_0)$. Consider $h \in N(Q)$ and let \[ 
	\tilde{h}_n
	:= \frac{1}{2 \pi} \int_0^{2 \pi}{ h(s) e^{- in s} ds} \] 
be its $n$-th Fourier coefficient. We
have $\tilde{h}_1 = \tilde{h}_{-1} = 0$.  
The function $h$ is continuous, and so $h$ belongs to $L^2([0, 2 \pi])$. We deduce that
\[ \sum_{n \in \mathbb{Z} \backslash \{-1, 1 \} } { |\tilde{h}_n|^2 } < \infty. \]
Define
\[ \forall n \in \mathbb{Z} \backslash \{-1, 1\},\quad \epsilon_n := \frac{J(\alpha_0) \widehat{\Theta}_{\alpha_0, 
\tau_0}(i n)}{1- J(\alpha_0) \widehat{\Theta}_{\alpha_0, \tau_0}(i n)}. \]
 Using \cite[Lem. 33, 34]{cormier2020meanfield}, the function $t \mapsto \Psi_{\alpha_0}(t)$, explicitly 
	given by 
	\eqref{eq:une formule pour Psi_alpha}, is 
	$\mathcal{C}^1$
	and its derivative $\Xi_{\alpha_0}(t) = \frac{d}{dt} \Psi_{\alpha_0}(t)$ belongs to $L^1(\mathbb{R}_+)$. The same 
	holds 
	true 
	for $t \mapsto \Xi_{\alpha_0}(t)$. So, using \eqref{eq:08-20-21} and \eqref{eq:definition de Theta_(alpha,tau)}, 
	we 
	deduce that $t \mapsto \Theta_{\alpha_0, 
		\tau_0}(t)$ is $\mathcal{C}^1$ and its derivative belongs to $L^1(\mathbb{R}_+)$. This gives the 
existence of a constant $C$ such that for $n \in
\mathbb{Z}$,
	\[ |n| > 1 \implies |\epsilon_n| \leq \frac{C}{|n|}. \]
	We deduce that
	\[  \sum_{n \in \mathbb{Z} \backslash \{-1, 1 \} } { |n \epsilon_n \tilde{h}_n|^2 } < 
	\infty.\]
	Consequently, defining
	\[ \forall t \in \mathbb{R},\quad w(t) := \sum_{n \in \mathbb{Z} \backslash \{-1, 1 \} 
	}{ \epsilon_n \tilde{h}_n e^{i n t}}, \]
	it holds that $w \in H^1([0, 2 \pi])$, and so $w$ is continuous (see for instance 
	\cite[Th.
	8.2]{MR2759829}).
	Finally, let $k := h + w$. It holds that $k \in 
		C^{0,0}_{2 \pi}$ and the $n$-th Fourier 
	coefficient of $k$ is equals to $ \frac{\tilde{h}_n}{1-J(\alpha_0) 
		\widehat{\Theta}_{\alpha_0, \tau_0}(i n)} $. We deduce that $B_0(k) = h$. This 
		ends 
	the proof.
\end{proof}

\subsection{The Lyapunov-Schmidt reduction method}
\label{sec:lyapunov}

The problem of finding the roots of $G$ defined by \eqref{eq:the function G}  is
an infinite dimensional problem. We use the method of Lyapunov-Schmidt to
obtain an equivalent problem of finite-dimension - here of dimension 2. The 
equation $G
= 0$ is equivalent to
\begin{align*}
	Q G (Qh + (I-Q) h, \alpha, \tau) &= 0 \\
	(I-Q) G (Qh + (I-Q) h, \alpha, \tau) &= 0 
\end{align*}
where the projector $Q$ is defined by \eqref{eq:definition projector Q}.
Define the following function  $W$:
\[
\begin{array}{rrcl}
	W:  & U_2 \times W_2 \times V_{\alpha_0} \times  V_{\tau_0} &\rightarrow & 
	R(B_0)  \\
	& (v, w, \alpha, \tau) & \mapsto &  (I-Q)G(v+w, \alpha, \tau),
\end{array}
\]
where $U_2 \times W_2$ are open neighborhood of $(0, 0)$ in $N(B_0) \times R 
(B_0).$

We have $W(0, 0, \alpha_0, \tau_0) = 0$ and $D_w W(0, 0, \alpha_0, \tau_0) = (I-Q) 
D_h G(0, 
\alpha_0, \tau_0) = (I-Q) B_0 \in \mathcal{L}(R(B_0), R(B_0)) $ which is bijective with 
continuous 
inverse.
The implicit function theorem applies: there exists a $\mathcal{C}^1$ function $\psi:
N(B_0) \times V_{\alpha_0} \times V_{\tau_0} \mapsto R(B_0)$ such that
\begin{align*}
	&W(v, w, \alpha, \tau) = 0 \text{ for } (v, w, \alpha, \tau) \in U_2 \times W_2 \times 
	V_{\alpha_0} \times V_{\tau_0} \text{ is equivalent to } \\
	& w = \psi(v, \alpha, \tau).
\end{align*}
Again, the neighborhoods $U_2, W_2, V_{\tau_0}, V_{\alpha_0}$ may be shrunk in 
this
construction. We deduce that
\begin{align}
	\label{eq:lyapounov schmidt}
	&G(h, \alpha, \tau) = 0 \text{ for } (h, \alpha, \tau) \in X \times V_{\alpha_0} \times 
	V_{\tau_0} \text{ 
		is equivalent to } \\
	& Q G( Qh + \psi(Qh, \alpha, \tau), \alpha, \tau) = 0. \label{eq:reduced LP}
\end{align}
	Indeed, if $G(h, \alpha, \tau) = 0$, we have in particular $W(Qh, (I-Q)h, \alpha, 
	\tau) 
	= 0$ and so $(I-Q)h = \psi(Qh, \alpha, \tau)$: this gives \eqref{eq:reduced LP}. 
	Reciprocally, if 
	\eqref{eq:reduced LP} holds, we set $h = Qh + \psi(Qh, \alpha, \tau)$ and obtain 
	\eqref{eq:lyapounov schmidt}.
Note that for all $\theta \in \mathbb{R}$, we have for all $\tau > 0$ and 
$\boldsymbol{a} \in 
C^0_{2
	\pi},~\rho_{S_\theta(\boldsymbol{a}), \tau} = S_\theta (\rho_{\boldsymbol{a}, 
	\tau})$. It follows by 
definition of $G$ that
\[ G(S_\theta(h), \alpha, \tau) = S_\theta (G(h, \alpha, \tau)). \]
Moreover, it is clear that the projection $Q$ commutes with $S_\theta$ (for all 
$\theta \in \mathbb{R},~S_\theta 
Q = Q S_\theta$) and by the local uniqueness of the implicit  function theorem, we 
deduce that
\[ \psi(S_\theta(v), \alpha, \tau) = S_\theta (\psi(v, \alpha, \tau)). \]
Using that any element $Q h \in N(B_0)$ can be written
\[ Qh = t \mapsto c e^{it} + \bar{c} e^{-it} := c e_0 + \bar{c} \bar{e}_0 \]
for some $c \in \mathbb{C}$ and using the definition of $Q$, we deduce that 
\eqref{eq:lyapounov schmidt} is equivalent to the complex equation:
\[  \hat{\Phi} (c, \alpha, \tau) = 0 \text{ for } (c,\alpha, \tau) \in  V_0 \times 
V_{\alpha_0} \times  V_{\tau_0} \text{, where } \]
\[ \hat{\Phi} (c, \alpha, \tau) :=  \frac{1}{2 \pi}  \int_0^{2 \pi} { G(  c e_0 + \bar{c} 
\bar{e}_0 + \psi( c e_0 + \bar{c} \bar{e}_0, \alpha, \tau), \alpha, \tau)_t  } e^{-it }dt \]
and  $V_0$ is an open neighborhood of $0$ in $\mathbb{C}$.
We have moreover
\[ \forall \theta \in \mathbb{R},\quad \hat{\Phi} (c e^{i \theta}, \alpha, \tau) = e^{i 
\theta} \hat{\Phi} (c, \alpha, \tau), \]
and so \eqref{eq:lyapounov schmidt}  is equivalent to
\[  \hat{\Phi} (v, \alpha, \tau) = 0 \text{ for } v \in (-v_0, v_0).  \]
Note that $ \hat{\Phi} (-v, \alpha, \tau) =  -\hat{\Phi} (v, \alpha, \tau)$ and in 
particular
\[ \forall \alpha, \tau \in  V_{\alpha_0} \times V_{\tau_0},\quad   \hat{\Phi} (0, \alpha, 
\tau) = 0. \]
This is coherent with \eqref{eq:trivial solutions of G}. In order to eliminate these 
trivial
solutions, following \cite{MR2859263}, we set for $v \in (-v_0, v_0) \setminus \{ 0 \}$:
\begin{align*}
	\tilde{\Phi} (v, \alpha, \tau) :=& \frac{\hat{\Phi} (v, \alpha, \tau)}{v} \\
	= & \int_0^1{ D_v \hat{\Phi} (\theta v, \alpha, \tau) d \theta}.
\end{align*}
To summarize, we have proved that
\begin{lemma}
	\label{lem:the reduction to 2D}
	There exists $v_0 > 0$ and  open neighborhoods $X \times V_{\alpha_0} \times
	V_{\tau_0}$ of $(0, \alpha_0, \tau_0)$  in $C^{0,0}_{2 \pi} \times \mathbb{R}^*_+ 
	\times
	\mathbb{R}^*_+$ such that the problem
	\[  G(h, \alpha, \tau) = 0 \text{ for } (h, \alpha, \tau) \in X \times V_{\alpha_0} \times
	V_{\tau_0} \text{ with } h \neq 0 \]
	is equivalent to
	\[  \tilde{\Phi} (v, \alpha, \tau)  = 0 \text{ for } (v, \alpha, \tau) \in (-v_0, v_0) \times 
	V_{\alpha_0} \times V_{\tau_0}. \]
\end{lemma}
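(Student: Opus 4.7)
The plan is to carry out a Lyapunov--Schmidt reduction using the topological direct sum $C^{0,0}_{2\pi} = N(B_0) \oplus R(B_0)$ granted by Proposition~\ref{prop:ker and im of B0}, together with the $S_\theta$--equivariance of $G$ implied by \eqref{eq:rho under translation}. Everything in sight will be $\mathcal{C}^2$ thanks to Proposition~\ref{prop:regularity of rho}, so the classical implicit function theorem applies throughout.

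First, I would split $h = v + w$ with $v \in N(B_0)$ and $w \in R(B_0)$ and introduce the auxiliary map
\[
W(v, w, \alpha, \tau) := (I - Q)\, G(v + w, \alpha, \tau),
\]
which takes values in $R(B_0) = N(Q)$ by Proposition~\ref{prop:ker and im of B0}. One has $W(0, 0, \alpha_0, \tau_0) = 0$, and $D_w W(0, 0, \alpha_0, \tau_0)$ is the restriction of $B_0$ to $R(B_0)$ (since $R(B_0) \subset N(Q)$ makes $I-Q$ act as the identity there); this restriction is a continuous bijection of $R(B_0)$ onto itself because the sum is direct and $B_0$ is surjective onto its range. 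The implicit function theorem then supplies a $\mathcal{C}^1$ map $\psi$ with $(I - Q)\, G(v + \psi(v, \alpha, \tau), \alpha, \tau) \equiv 0$, and so $G(h, \alpha, \tau) = 0$ is locally equivalent to $w = \psi(Qh, \alpha, \tau)$ together with the reduced equation $Q\, G(v + \psi(v, \alpha, \tau), \alpha, \tau) = 0$ on the two-dimensional kernel $N(B_0) = \operatorname{Span}_{\mathbb{R}}(e_0, \bar e_0)$. Parametrising $v = c e_0 + \bar c \bar e_0$ with $c \in \mathbb{C}$ turns the reduced equation into a single complex equation $\hat\Phi(c, \alpha, \tau) = 0$, as defined in the text.

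Next, I would exploit the phase-shift equivariance. From \eqref{eq:rho under translation} and the definition \eqref{eq:the function G} one gets $G(S_\theta h, \alpha, \tau) = S_\theta\, G(h, \alpha, \tau)$; inspection of \eqref{eq:definition projector Q} gives $Q S_\theta = S_\theta Q$; and the local uniqueness clause of the implicit function theorem forces $\psi(S_\theta v, \alpha, \tau) = S_\theta \psi(v, \alpha, \tau)$. Since $S_\theta(c e_0 + \bar c \bar e_0) = (c e^{i\theta}) e_0 + \overline{(c e^{i\theta})} \bar e_0$, one obtains the homogeneity $\hat\Phi(c e^{i\theta}, \alpha, \tau) = e^{i\theta} \hat\Phi(c, \alpha, \tau)$. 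Writing any $c = v e^{i\theta}$ with $v \in \mathbb{R}$ (negative $v$ absorbing a $\pi$--shift), every root of $\hat\Phi$ comes, after shifting the underlying $h$ by $S_{-\theta}$, from a real root, so it suffices to solve $\hat\Phi(v, \alpha, \tau) = 0$ for $v \in (-v_0, v_0)$. Because \eqref{eq:trivial solutions of G} yields $\hat\Phi(0, \alpha, \tau) \equiv 0$, the fundamental theorem of calculus along the real axis produces a $\mathcal{C}^1$ function $\tilde\Phi(v, \alpha, \tau) := \int_0^1 D_v \hat\Phi(\theta v, \alpha, \tau)\, d\theta$ with $\hat\Phi(v, \alpha, \tau) = v\, \tilde\Phi(v, \alpha, \tau)$. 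Since the condition $h \neq 0$ is equivalent to $v \neq 0$ (indeed $v = 0$ forces $Qh = 0$ and then $w = \psi(0, \alpha, \tau) = 0$ by uniqueness), the asserted equivalence follows after possibly shrinking $X$, $V_{\alpha_0}$, $V_{\tau_0}$.

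The main obstacle is the equivariance step: without the commutation of $S_\theta$ with $G$, $Q$ and $\psi$, one would be stuck with a genuinely two-dimensional bifurcation equation, and the reduction to the scalar $\tilde\Phi$ would fail. Everything hinges on deducing $\psi \circ S_\theta = S_\theta \circ \psi$ from the uniqueness in the implicit function theorem, and on verifying that the complex-to-real reduction truly accounts for every non-trivial root of $G$ up to a single phase shift. Once these points are secured, the smoothness of $\tilde\Phi$ at $v = 0$ and the bookkeeping of neighbourhoods are routine.
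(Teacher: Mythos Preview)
Your proposal is correct and follows essentially the same Lyapunov--Schmidt reduction as the paper: the same auxiliary map $W$, the same implicit function $\psi$, the same use of $S_\theta$--equivariance to pass from $\hat\Phi(c,\alpha,\tau)$ to a real scalar equation, and the same division by $v$ via the integral formula to define $\tilde\Phi$. Your added remark that $h\neq 0 \iff v\neq 0$ (using $\psi(0,\alpha,\tau)=0$, which follows from the trivial solutions \eqref{eq:trivial solutions of G} and uniqueness) makes explicit a point the paper leaves implicit.
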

The next section is devoted to the study of this reduced problem.

\subsection{Study of the reduced 2D-problem}
\label{sec:2D}
We denote by $\cos$ the cosinus function, such that $v e_0 + v \bar{e_0} = 2 v 
\cos.$
\begin{lemma}
	We have:
	\begin{enumerate}
		\item $\tilde{\Phi} (0, \alpha_0, \tau_0) = 0$.
		\item $D_\tau \tilde{\Phi} (0, \alpha_0, \tau_0) = \frac{1}{2 \pi} \int_0^{2 \pi}{  
		\left[ D^2_{h
				\tau} G(0, \alpha_0, \tau_0) \cdot 2\cos \right]_t e^{-it } dt}. $
		\item $D_\alpha \tilde{\Phi} (0, \alpha_0, \tau_0) = \frac{1}{2 \pi} \int_0^{2 \pi}{  
		\left[ D^2_{h
				\alpha} G(0, \alpha_0, \tau_0) \cdot 2\cos \right]_t e^{-it } dt}. $
	\end{enumerate}
\end{lemma}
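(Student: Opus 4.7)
The plan is to exploit the identity $\tilde{\Phi}(0, \alpha, \tau) = D_v \hat{\Phi}(0, \alpha, \tau)$, immediate from $\tilde{\Phi}(v, \alpha, \tau) = \int_0^1 D_v \hat{\Phi}(\theta v, \alpha, \tau)\, d\theta$. Writing $Qh = v e_0 + v \bar{e}_0 = 2v\cos$ for real $v$, the chain rule applied to the definition of $\hat{\Phi}$ gives
\[ D_v \hat{\Phi}(v, \alpha, \tau) = \frac{1}{2\pi} \int_0^{2\pi} \Bigl[D_h G\bigl(2v\cos + \psi(2v\cos, \alpha, \tau), \alpha, \tau\bigr) \cdot \bigl(2\cos + D_1\psi(2v\cos, \alpha, \tau)\cdot 2\cos\bigr)\Bigr]_t\, e^{-it}\, dt, \]
where $D_1\psi$ denotes the derivative of $\psi$ in its first (function-space) argument. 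Everything reduces to evaluating this formula and its $\tau$- and $\alpha$-derivatives at $(0, \alpha_0, \tau_0)$, for which I will use two ingredients supplied by Proposition~\ref{prop:ker and im of B0}: $2\cos \in N(B_0)$, so $B_0 \cdot 2\cos = 0$; and $R(B_0) = N(Q)$, so the $e^{-it}$-Fourier coefficient of $B_0 X$ vanishes for every $X \in C^{0,0}_{2\pi}$.

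Two preparatory facts about $\psi$ will drive the computation. First, $\psi(0, \alpha, \tau) \equiv 0$ on the whole neighborhood; this follows from $G(0, \alpha, \tau) = 0$ (see~\eqref{eq:trivial solutions of G}) and the uniqueness clause of the implicit function theorem that defined $\psi$. Second, $D_1 \psi(0, \alpha_0, \tau_0) \cdot h = 0$ for every $h \in N(B_0)$. I would obtain this by differentiating the defining identity $(I-Q) G(v + \psi(v, \alpha, \tau), \alpha, \tau) = 0$ in $v$ at $v = 0$ and at $(\alpha_0, \tau_0)$: it yields $(I-Q) B_0 \bigl( h + D_1\psi(0, \alpha_0, \tau_0)\cdot h \bigr) = 0$. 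Since $R(B_0) \subset N(Q)$ implies $Q B_0 = 0$, i.e.\ $(I-Q) B_0 = B_0$, and $B_0 h = 0$ on $N(B_0)$, the equation collapses to $B_0 \bigl(D_1\psi(0, \alpha_0, \tau_0) \cdot h\bigr) = 0$. Since $D_1\psi$ is $R(B_0)$-valued and $B_0$ is injective on $R(B_0)$ (using the direct sum $C^{0,0}_{2\pi} = N(B_0) \oplus R(B_0)$), the claim follows.

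Item~1 then drops out: at $(0, \alpha_0, \tau_0)$ the integrand is $B_0 \cdot 2\cos + B_0 \cdot (D_1\psi(0, \alpha_0, \tau_0)\cdot 2\cos)$, whose first summand vanishes because $2\cos \in N(B_0)$ and whose second integrates to zero against $e^{-it}/(2\pi)$ because its values lie in $R(B_0) = N(Q)$. For items~2 and~3, I differentiate
\[ \tilde{\Phi}(0, \alpha, \tau) = \frac{1}{2\pi}\int_0^{2\pi}\Bigl[D_h G(0, \alpha, \tau) \cdot \bigl(2\cos + D_1\psi(0, \alpha, \tau)\cdot 2\cos\bigr)\Bigr]_t\, e^{-it}\, dt \]
with respect to $\tau$ (resp.\ $\alpha$) at $(\alpha_0, \tau_0)$. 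The product rule produces two terms: the first, $D^2_{h\tau} G(0, \alpha_0, \tau_0) \cdot (2\cos + D_1\psi(0, \alpha_0, \tau_0)\cdot 2\cos)$, collapses to $D^2_{h\tau} G(0, \alpha_0, \tau_0) \cdot 2\cos$ by the second preparatory fact; the second, $B_0 \cdot (D^2_{\tau 1}\psi(0, \alpha_0, \tau_0)\cdot 2\cos)$, has zero $e^{-it}$-Fourier coefficient because its integrand lies in $R(B_0) = N(Q)$ (since $\psi$ is $R(B_0)$-valued, so are all of its derivatives). This yields item~2; item~3 is identical with $\alpha$ in place of $\tau$. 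The only nontrivial step is establishing $D_1\psi(0, \alpha_0, \tau_0) = 0$ on $N(B_0)$, which hinges on the direct-sum decomposition of Proposition~\ref{prop:ker and im of B0}; everything else is chain-rule bookkeeping.
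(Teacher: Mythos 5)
Your proposal is correct and follows essentially the same route as the paper: reduce to $\tilde{\Phi}(0,\alpha,\tau)=D_v\hat{\Phi}(0,\alpha,\tau)$, use $\psi(0,\alpha,\tau)\equiv 0$, $2\cos\in N(B_0)$, and $QB_0=0$ (via $R(B_0)=N(Q)$) to kill the extra terms produced by the chain rule. The only cosmetic difference is that you derive $D_v\psi(0,\alpha_0,\tau_0)\cdot h=0$ for $h\in N(B_0)$ directly from the implicit-function identity and the splitting $N(Q)\cap R(Q)=\{0\}$, where the paper simply cites \cite[Coroll.\ 1.2.4]{MR2859263}; your derivation is exactly the standard proof of that corollary, so the arguments coincide.
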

\begin{proof}
	We have $\tilde{\Phi}(0, \alpha_0, \tau_0) = D_v \hat{\Phi}(0, \alpha_0, \tau_0)$ and
	\[ D_v \hat{\Phi}(0, \alpha_0, \tau_0) = \frac{1}{2 \pi} \int_0^{2 \pi}{ D_h G(0, 
	\alpha_0,
		\tau_0) \cdot \left[ 2 \cos + D_v \psi(0, \alpha_0, \tau_0) \cdot 2 \cos \right]_t 
		e^{-i t} dt }. \]
	Moreover, it holds that (see \cite[Coroll. 1.2.4]{MR2859263})
	\[ D_v \psi(0, \alpha_0, \tau_0) \cdot \cos = 0 \]
	and $\cos \in N(D_h G(0, \alpha_0, \tau_0))$, so $\tilde{\Phi}(0, \alpha_0, \tau_0) = 
	0$. To
	prove the second point (the third point is proved similarly), we have $D_\tau 
	\tilde{\Phi} (0,
	\alpha_0, \tau_0) = D^2_{v \tau} \hat{\Phi}(0, \alpha_0, \tau_0)$. Moreover,
	\begin{align*}
		D_\tau \hat{\Phi}(v, \alpha, \tau) = &\frac{1}{2 \pi} \int_0^{2 \pi}{ D_\tau G(2 v 
		\cos + \psi(2r
			\cos, \alpha, \tau), \alpha, \tau)_t e^{-i t} dt} \\
		& \quad\quad +  \frac{1}{2 \pi} \int_0^{2 \pi}{ \left[ D_h G(2 r\cos + \psi(2 v \cos, 
		\alpha,
			\tau), \alpha, \tau) \cdot  D_\tau \psi(2 v \cos, \alpha, \tau) \right]_t e^{-it} dt}. 
			\\
	\end{align*}
	So
	\begin{align*}
		D^2_{v \tau} \hat{\Phi}(0, \alpha_0, \tau_0) &= \frac{1}{2 \pi} \int_0^{2 \pi}{ \left[ 
		D^2_{h
				\tau}  G(0, \alpha_0, \tau_0)  \cdot \left(2 \cos + D_v \psi(0, \alpha_0, 
				\tau_0) \cdot 2 \cos
			\right) \right]_t  e^{-i t} dt}  \\
		& \quad \quad + \frac{1}{2 \pi} \int_0^{2 \pi}{ \left[D_h G(0, \alpha_0, \tau_0) 
		\cdot D^2_{v
				\tau } \psi(0, \alpha_0, \tau_0) \cdot 2 \cos \right]_t e^{-it} dt }  \\
		& \quad \quad + \frac{1}{2 \pi} \int_0^{2 \pi}{ D^2_{hh} G(0, \alpha_0, \tau_0) 
		\cdot [2 \cos
			+ D_v \psi(0, \alpha_0, \tau_0) \cdot 2 \cos, D_\tau \psi(0, \alpha_0, 
			\tau_0)]_t  e^{-it}dt}.
	\end{align*}
	Note that for all $\alpha, \tau$ in the neighborhood of $\alpha_0, \tau_0$, one has
	\[ \psi(0, \alpha, \tau) = 0,  \]
	so $D_\tau \psi(0,  \alpha_0, \tau_0) = 0$. Consequently the third term 
	is null.
	Recall now that $B_0 := D_h G(0, \alpha_0, \tau_0)$ and by 
	Proposition~\ref{prop:ker and im of 
		B0}, it holds that $Q B_0 = 0$. So the second term is also null.
	Finally, using again that $D_v \psi(0, \alpha_0, \tau_0) \cdot \cos  = 0$ we obtain 
	the
	stated formula.
\end{proof}
By Proposition~\ref{prop:the differential of G at x = 0}, we have for all $h \in 
C^{0,0}_{2 \pi}$
\[ D_h G(0, \alpha, \tau) \cdot h = h - J(\alpha) \Theta_{\alpha, \tau} * h, \]
where the function $\Theta_{\alpha, \tau }$ is given by equation \eqref{eq:definition 
de
	Theta_(alpha,tau)}. It follows that
\[ D^2_{h \tau} G(0, \alpha_0, \tau_0) \cdot 2\cos  = - 2 J(\alpha_0) \frac{\partial 
}{\partial
	\tau } \left. \left(  \Theta_{\alpha_0, \tau} * \cos  \right) \right|_{\tau = \tau_0} ,    \]
and so we have
\[ D_\tau \tilde{\Phi} (0, \alpha_0, \tau_0) = - J(\alpha_0)  \frac{\partial }{\partial \tau } 
\left. \widehat{\Theta}_{\alpha_0, \tau} (i) \right|_{\tau = \tau_0} . \]
Similarly,
\[ D_\alpha \tilde{\Phi} (0, \alpha_0, \tau_0) = -  \frac{\partial }{\partial \alpha }  \left. 
\left(
J(\alpha)  \widehat{\Theta}_{\alpha, \tau_0}(i) \right) \right|_{\alpha = \alpha_0}. \]
\begin{lemma}
	\label{lem:link between Theta and Zalpha}
	Write $J(\alpha_0) \frac{\partial}{\partial z} \widehat{\Theta}_{\alpha_0}( \tfrac{i}{ 
	\tau_0} 
	)  =: x_0 + i y_0$. It holds that
	\begin{enumerate}
		\item $  D_\tau \tilde{\Phi} (0, \alpha_0, \tau_0)= (i x_0 -y_0) / \tau^2_0.$
		\item $ D_\alpha \tilde{\Phi} (0, \alpha_0, \tau_0) = \mathfrak{Z}_0'(\alpha_0)(x_0 
		+ i y_0), $
		where $\mathfrak{Z}_0'(\alpha_0)$ is defined in Lemma~\ref{lem:definition of
			mu(alpha)}.
	\end{enumerate}
\end{lemma}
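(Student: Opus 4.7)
The plan is to reduce both identities to the scaling relation between $\widehat{\Theta}_{\alpha,\tau}$ and $\widehat{\Theta}_\alpha$, and then apply the chain rule (for part~1) and the implicit function theorem identity from Lemma~\ref{lem:definition of mu(alpha)} (for part~2). The two formulas for $D_\tau\tilde\Phi(0,\alpha_0,\tau_0)$ and $D_\alpha\tilde\Phi(0,\alpha_0,\tau_0)$ derived just before the lemma provide the starting point.

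\textbf{Step 1: scaling identity.} From the definition \eqref{eq:definition de Theta_(alpha,tau)} and the change of variables $u = \tau t$ in the Laplace transform, I would first establish the simple relation
\[
\widehat{\Theta}_{\alpha,\tau}(z) \;=\; \int_0^\infty e^{-zt}\,\tau\,\Theta_\alpha(\tau t)\,dt \;=\; \widehat{\Theta}_\alpha(z/\tau),
\]
valid for $\Re(z/\tau)> -\lambda^*_\alpha$. Evaluating at $z = i$ gives $\widehat\Theta_{\alpha,\tau}(i) = \widehat\Theta_\alpha(i/\tau)$.

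\textbf{Step 2: derivative in $\tau$.} Applying the chain rule at $\tau = \tau_0$,
\[
\frac{\partial}{\partial\tau}\widehat{\Theta}_{\alpha_0,\tau}(i)\bigg|_{\tau=\tau_0}
= -\frac{i}{\tau_0^{2}}\,\widehat\Theta_{\alpha_0}'\!\left(\tfrac{i}{\tau_0}\right).
\]
Combining with the expression $D_\tau\tilde\Phi(0,\alpha_0,\tau_0) = -J(\alpha_0)\,\partial_\tau \widehat{\Theta}_{\alpha_0,\tau}(i)|_{\tau=\tau_0}$ established just above the lemma, and using the definition $J(\alpha_0)\,\widehat\Theta_{\alpha_0}'(i/\tau_0) = x_0 + iy_0$, I obtain
\[
D_\tau\tilde\Phi(0,\alpha_0,\tau_0) \;=\; \frac{i}{\tau_0^{2}}\,(x_0 + iy_0) \;=\; \frac{i x_0 - y_0}{\tau_0^{2}},
\]
which is part~(1).

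\textbf{Step 3: derivative in $\alpha$ via the implicit function theorem.} Lemma~\ref{lem:definition of mu(alpha)} yields the identity
\[
J(\alpha)\,\widehat\Theta_\alpha(\mathfrak{Z}_0(\alpha)) \;=\; 1
\]
for $\alpha$ in a neighbourhood of $\alpha_0$. Differentiating with respect to $\alpha$ at $\alpha=\alpha_0$ (using $\mathfrak{Z}_0(\alpha_0) = i/\tau_0$) gives
\[
\frac{\partial}{\partial\alpha}\bigl(J(\alpha)\,\widehat\Theta_\alpha\bigr)\!\bigg|_{\alpha=\alpha_0}\!\!(\tfrac{i}{\tau_0})
\;+\; J(\alpha_0)\,\widehat\Theta_{\alpha_0}'(\tfrac{i}{\tau_0})\,\mathfrak{Z}_0'(\alpha_0) \;=\; 0.
\]
Recalling $D_\alpha\tilde\Phi(0,\alpha_0,\tau_0) = -\,\partial_\alpha\bigl(J(\alpha)\widehat\Theta_{\alpha,\tau_0}(i)\bigr)\big|_{\alpha_0}$ and that $\widehat\Theta_{\alpha,\tau_0}(i) = \widehat\Theta_\alpha(i/\tau_0)$ by Step~1, the previous display rearranges to
\[
D_\alpha\tilde\Phi(0,\alpha_0,\tau_0) \;=\; \mathfrak{Z}_0'(\alpha_0)\,J(\alpha_0)\,\widehat\Theta_{\alpha_0}'(\tfrac{i}{\tau_0})
\;=\; \mathfrak{Z}_0'(\alpha_0)\,(x_0 + iy_0),
\]
which is part~(2).

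\textbf{Obstacle.} There is no real analytic obstacle here: once the scaling identity of Step~1 is in hand and one recognises that $J(\alpha)\widehat\Theta_\alpha(\mathfrak{Z}_0(\alpha))\equiv 1$ encodes the total $\alpha$-derivative, both identities follow from a one-line chain rule. The only thing to be slightly careful about is that Lemma~\ref{lem:definition of mu(alpha)}'s differentiability is required to justify the implicit differentiation, but this was already built into the statement (\(\mathfrak{Z}_0\in\mathcal{C}^1\)).
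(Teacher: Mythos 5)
Your proof is correct and follows essentially the same route as the paper: part (2) is identical (implicit differentiation of $J(\alpha)\widehat{\Theta}_\alpha(\mathfrak{Z}_0(\alpha))=1$), and for part (1) your scaling identity $\widehat{\Theta}_{\alpha,\tau}(z)=\widehat{\Theta}_\alpha(z/\tau)$ plus the chain rule is just a slightly more direct packaging of the paper's computation, which differentiates $\Theta_{\alpha,\tau}(t)=\tau\Theta_\alpha(\tau t)$ in the time domain and recovers the same factor $-\tfrac{i}{\tau_0^2}\widehat{\Theta}_{\alpha_0}'(i/\tau_0)$ via an integration by parts.
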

\begin{proof}
	From $\Theta_{\alpha, \tau}(t) = \tau \Theta_\alpha(\tau t)$, we have
	\[  \frac{\partial}{\partial \tau } \Theta_{\alpha, \tau} (t) = \tfrac{1}{\tau} \left[ \tau
	\Theta_\alpha(\tau t) + \tau \Pi_\alpha(\tau t) \right],\quad \text{ with } \quad 
	\Pi_\alpha(t) :=
	t \frac{\partial}{\partial t} \Theta_\alpha (t).   \]
	So
	\[ \widehat{ \left[  \frac{\partial }{\partial \tau } \Theta_{\alpha, \tau} \right] } (z) = 
	\frac{1}{\tau}\left[  \widehat{\Theta}_\alpha\left( \frac{z}{\tau }\right) + 
	\widehat{\Pi}_\alpha\left( 
	\frac{z}{\tau }\right)  \right].  \]
	Moreover, an integration by parts shows that
	\begin{align*}
		\widehat{\Pi}_\alpha(z) &= \int_{0}^\infty{ e^{-zt }  t \frac{\partial}{\partial t} 
		\Theta_\alpha (t) dt} \\
		&= -\widehat{\Theta}_\alpha(z) + z \int_0^\infty{ e^{-zt} t \Theta_\alpha(t) dt}. \\
		&=    -\widehat{\Theta}_\alpha(z)  - z \frac{ \partial}{\partial z} 
		\widehat{\Theta}_\alpha(z).
	\end{align*}
	Choosing $z = i$ ends  the proof of the first point. Define now
	\[ \Delta(z, \alpha) := J(\alpha) \widehat{\Theta}_\alpha (z) -1. \]
	By the definition of $\mathfrak{Z}_0(\alpha)$ (see Lemma~\ref{lem:definition of
		mu(alpha)}), we have
	\[ \forall \alpha \in V_{\alpha_0},\quad  \Delta(\mathfrak{Z}_0(\alpha), \alpha) = 0. \]
	We differentiate with respect to $\alpha$ and obtain
	\[ \frac{\partial}{\partial z} \Delta(\mathfrak{Z}_0(\alpha),\alpha) 
	\mathfrak{Z}_0'(\alpha) +
	\frac{\partial}{\partial \alpha} \Delta(\mathfrak{Z}_0(\alpha),\alpha) = 0.\]
	Evaluating this expression at $\alpha = \alpha_0$ gives
	\[ \frac{\partial }{\partial \alpha }   \left. \left(   J(\alpha) \widehat{\Theta}_{\alpha} 
	\right)
	\right|_{\alpha = \alpha_0}  (\tfrac{i}{\tau_0}) = -\mathfrak{Z}_0'(\alpha_0) (x_0 + i 
	y_0), \]
	which concludes the proof.
\end{proof}
\begin{lemma}
	There exists $v_0 > 0$, an open neighborhood  $V_{\alpha_0} \times V_{\tau_0}$   
	of
	$(\alpha_0, \tau_0)$ in $(\mathbb{R}^*_+)^2$ and two functions $v \mapsto \tau_v,
	\alpha_v \in \mathcal{C}^1((-v_0, v_0))$ such that for all $(v, \alpha, \tau) \in (-v_0, 
	v_0)
	\times V_{\alpha_0} \times V_{\tau_0}$ we have
	\[ \tilde{\Phi}(v, \alpha, \tau) = 0 \iff \tau = \tau_v \text{ and } \alpha = \alpha_v. \]
\end{lemma}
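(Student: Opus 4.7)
The plan is to reduce the stated equivalence to a direct application of the implicit function theorem for the map $(\alpha, \tau) \mapsto \tilde{\Phi}(v, \alpha, \tau)$, viewed as a map from a neighborhood of $(\alpha_0, \tau_0)$ in $\mathbb{R}^2$ into $\mathbb{C} \simeq \mathbb{R}^2$. The preceding lemma already supplies $\tilde{\Phi}(0, \alpha_0, \tau_0) = 0$ together with explicit formulas for $D_\alpha \tilde{\Phi}(0, \alpha_0, \tau_0)$ and $D_\tau \tilde{\Phi}(0, \alpha_0, \tau_0)$, so the only remaining work is to check the $\mathcal{C}^1$-regularity of $\tilde{\Phi}$ near the base point and the invertibility of the resulting $2\times 2$ real Jacobian at $(\alpha_0, \tau_0)$.

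For regularity, I would invoke the $\mathcal{C}^2$-Fréchet differentiability of $G$ established in Section~\ref{sec:fonctionnal G} (itself a consequence of Proposition~\ref{prop:regularity of rho}) together with the $\mathcal{C}^1$ character of the Lyapunov--Schmidt correction $\psi$ obtained in Section~\ref{sec:lyapunov}. Composition then yields $\hat{\Phi} \in \mathcal{C}^1$, and the integral representation $\tilde{\Phi}(v, \alpha, \tau) = \int_0^1 D_v \hat{\Phi}(\theta v, \alpha, \tau)\,d\theta$ extends smoothly through $v = 0$ and is $\mathcal{C}^1$ in $(v,\alpha,\tau)$ in a neighborhood of $(0, \alpha_0, \tau_0)$.

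For the non-degeneracy, I would write $x_0 + iy_0 := J(\alpha_0) \frac{\partial}{\partial z} \widehat{\Theta}_{\alpha_0}(\tfrac{i}{\tau_0})$ and $\mathfrak{Z}_0'(\alpha_0) =: a + ib$. By Lemma~\ref{lem:link between Theta and Zalpha},
\[ D_\tau \tilde{\Phi}(0, \alpha_0, \tau_0) = \tfrac{1}{\tau_0^2}(-y_0 + ix_0), \qquad D_\alpha \tilde{\Phi}(0, \alpha_0, \tau_0) = (a+ib)(x_0 + iy_0), \]
so that, in real coordinates with columns indexed by $(\alpha, \tau)$, the Jacobian reads
\[ \begin{pmatrix} ax_0 - by_0 & -y_0/\tau_0^2 \\ ay_0 + bx_0 & x_0/\tau_0^2 \end{pmatrix}, \]
whose determinant simplifies at once to $\tau_0^{-2}\, a(x_0^2 + y_0^2) = \tau_0^{-2}\, \Re \mathfrak{Z}_0'(\alpha_0)\, \bigl|J(\alpha_0)\tfrac{\partial}{\partial z}\widehat{\Theta}_{\alpha_0}(\tfrac{i}{\tau_0})\bigr|^2$. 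Assumption~\ref{ass:i / b0 roots of the characteristic equation} makes the squared-modulus factor strictly positive, while Assumption~\ref{ass:hopf condition} makes $\Re \mathfrak{Z}_0'(\alpha_0)$ non-zero; the Jacobian is therefore invertible.

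No serious obstacle is expected: the hard analytical work has been done upstream, in setting up the $\mathfrak{Z}_0$ branch (Lemma~\ref{lem:definition of mu(alpha)}), in the $\mathcal{C}^2$-regularity of $\rho$ (Proposition~\ref{prop:regularity of rho}), and in translating Assumptions~\ref{ass:i / b0 roots of the characteristic equation} and~\ref{ass:hopf condition} into the computation above. Applying the implicit function theorem then yields $v_0 > 0$, neighborhoods $V_{\alpha_0}$ and $V_{\tau_0}$ (possibly shrunk), and unique $\mathcal{C}^1$-curves $v \mapsto (\alpha_v, \tau_v)$ with $\alpha_{v=0} = \alpha_0$, $\tau_{v=0} = \tau_0$, parametrizing all solutions of $\tilde{\Phi}(v, \alpha, \tau) = 0$ in the chosen neighborhood.
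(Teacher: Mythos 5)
Your proposal is correct and follows essentially the same route as the paper: decompose $\tilde{\Phi}$ into real and imaginary parts, use the preceding lemma's formulas for $D_\alpha\tilde{\Phi}$ and $D_\tau\tilde{\Phi}$ at $(0,\alpha_0,\tau_0)$, compute the determinant $\tau_0^{-2}\,\Re\mathfrak{Z}_0'(\alpha_0)(x_0^2+y_0^2)$, and conclude by the implicit function theorem under Assumptions~\ref{ass:i / b0 roots of the characteristic equation} and~\ref{ass:hopf condition}. The regularity discussion you add is consistent with what the paper leaves implicit from the upstream constructions.
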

\begin{proof}
	We decompose $\tilde{\Phi}$ into real part and imaginary part (without changing 
	the notations), such that now
	\[ \tilde{\Phi}: (-v_0, v_0) \times V_{\alpha_0} \times V_{\tau_0} \rightarrow 
	\mathbb{R}^2.  \]
	We have $\tilde{\Phi}(0, \alpha_0, \tau_0) = 0$ and
	\begin{align*}
		D_{(\alpha, \tau)} \tilde{\Phi}(0, \alpha_0, \tau_0) =  &
		\begin{pmatrix}
			\Re  D_\alpha \tilde{\Phi} (0, \alpha_0, \tau_0) & \quad \Re  D_\tau \tilde{\Phi} 
			(0,
			\alpha_0, \tau_0)  \\
			\Im  D_\alpha \tilde{\Phi} (0, \alpha_0, \tau_0) & \quad  \Im  D_\tau \tilde{\Phi} 
			(0,
			\alpha_0, \tau_0)  \\
		\end{pmatrix}  \\
		=  & \begin{pmatrix}
			x_0  \Re  \mathfrak{Z}_0'(\alpha_0) - y_0 \Im \mathfrak{Z}_0'(\alpha_0) & 
			\quad -
			\frac{y_0}{\tau^2_0} \\
			x_0 \Im \mathfrak{Z}_0'(\alpha_0) + y_0 \Re \mathfrak{Z}_0'(\alpha_0) &  \quad
			\frac{x_0}{\tau^2_0} \\
		\end{pmatrix}.
	\end{align*}

	The determinant of this matrix is $\frac{\Re \mathfrak{Z}_0'(\alpha_0)}{\tau_0^2}( 
	x^2_0 +
	y_0^2)$ and this quantity is non-null by Assumptions~\ref{ass:i / b0 roots of the
		characteristic equation} and \ref{ass:hopf condition}.
	Consequently, the implicit function theorem applies and gives the result.
\end{proof}
The proof of Proposition~\ref{prop:the zeros of G} then follows immediately from 
this
result and  Lemma~\ref{lem:the reduction to 2D}. This ends the proof of
Theorem~\ref{th:main result hopf}.
\section{An explicit example}
\label{sec:toy model}
We now give a simple example of functions $f$ and $b$ such that Hopf
bifurcations
occurs and such that the spectral assumptions of Theorem~\ref{th:main result hopf} 
can
be analytically verified. 
First, by \cite[Th. 21]{cormier2020meanfield}, if
	\begin{equation} \forall x \geq 0, \quad f(x)+ b'(x) \geq 0, 
		\label{eq:sufficient condition stab}
	\end{equation}
	then any invariant probability measure of \eqref{NL-equation} is locally stable. So, 
	to have Hopf 
	bifurcations, the 
	drift $b$ has to be sufficiently attractive to break \eqref{eq:sufficient condition 
	stab}.
Our minimal example satisfies all the assumptions of
Theorem~\ref{th:main result hopf}, except Assumption~\ref{as:hyp on f}, because 
the
function $f$ we consider is not continuous. Indeed, to simplify the computation, we
consider the step function
\[
\forall  x \in \mathbb{R}_+, \quad f(x) := %
\begin{cases}
	0 & \text{for } 0 \leq x < 1,\\
	1 / \beta & \text{for } x \geq 1,
\end{cases}
\]
where $\beta >0$ is a (small) parameter of the model.
\subsection{Some generalities when  \texorpdfstring{$f$}{f} is a step function}
We shall specify later the exact
shape of $b$, for now we only assume that
\[ \inf_{x \in [0, 1]} b(x) > 0.  \]
This ensures in particular that the Dirac mass 
at \(0\)
is not an invariant measure. 
We
now consider some fixed constant $\alpha \geq 0$.
Let, for all $x \in [0, 1]$
\[ t^*_\alpha(x) := \inf\{ t \geq 0,~ \varphi^\alpha_t(x) = 1 \}, \]
the time required for the deterministic flow to hit $1$, starting from $x$.  A simple
computation shows that
\[ t^*_\alpha(x) = \int_x^1{\frac{dy}{b(y) + \alpha}}.\]
Let $H^x_\alpha(t)$ be defined by \eqref{def:jump rate, survival, density of
	survival} (with
$\nu = \delta_x, ~\boldsymbol{a} \equiv \alpha$ and $s = 0$). Using the
explicit shape of
$f$, we find for
all $x \in [0, 1]$,
\[
H^x_\alpha(t) := 
\begin{cases}
	1 & \text{for } 0 \leq t < t^*_\alpha(x), \\
	e^{- \frac{t - t^*_\alpha(x)}{\beta }} & \text{for } t \geq  t^*_\alpha(x). %
\end{cases}
\]
Moreover, 
\begin{equation} 
	\label{eq:04112020}
	\forall x > 1, \quad H^x_\alpha(t) = e^{- t / \beta}. 
\end{equation}
Altogether,
\[ \forall z \in \mathbb{C} \text{ with } \Re(z) > -1/\beta \quad \widehat{H}_\alpha(z) = 
\frac{1-e^{-z 
		t^*_\alpha(0)}}{z} + \frac{e^{-z t^*_\alpha(0)}}{z+1/\beta}.   \]
Note that in particular (using that $1/\gamma(\alpha) = \widehat{H}_\alpha(0)$)
\[ 1 / \gamma(\alpha) =  t^*_\alpha(0) + \beta.\]
So
\begin{equation}
	\label{eq:formula for J alpha toy}
	J(\alpha) := \frac{\alpha}{\gamma(\alpha)} = \int_0^1{ \frac{dy}{1+
			b(y)/\alpha}} + \alpha
	\beta 
\end{equation}
is a strictly increasing function of $\alpha$: 
for a fixed value of $J > 0$,
there is a unique  \(\alpha > 0\) solution  of $\alpha = J\gamma(\alpha)$
and the corresponding $\nu^\infty_\alpha$  is the unique invariant measure of 
\eqref{NL-equation}.
Let $\sigma_\alpha = \lim_{t \rightarrow \infty} \varphi^\alpha_t(0)$. 
This invariant
measure is given by
\[
\nu^\infty_{\alpha}(x) =  
\begin{cases}
	\frac{\gamma(\alpha)}{b(x) + \alpha} & \text{for } x \in [0, 1), \\
	\frac{\gamma(\alpha)}{b(x) + \alpha} \EXP{-\frac{1}{\beta} \int_1^x{
			\frac{dy}{b(y) + \alpha} 
	} } & \text{for } x \in [1, \sigma_{\alpha})  \\
	0 & \text{ otherwise. }
\end{cases}
\]
Moreover, for $x \in [0, 1]$ and $t > t^*_\alpha(x)$,
\[ \frac{d}{dx} H^x_\alpha(t) = - \frac{1}{\beta} \frac{ e^{-\frac{t - 
t^*_\alpha(x)}{\beta}}}{b(x) + \alpha}. \]
So the Laplace transform of $\frac{d}{dx}H^x_\alpha(t)$ is, for all $z 
\in \mathbb{C}$
with
$\Re(z) > -1/\beta$
\[ 
\forall x \in [0, 1],  \quad \int_0^\infty{ e^{-zt } \frac{d}{dx} H^x_\alpha(t) dt} = -\frac{
	e^{-t^*_\alpha(x) z}}{b(x) + \alpha} \frac{1}{1 + \beta z}. 
\]
Let $\Psi_\alpha$ be defined by \eqref{eq:expression de Psi_alpha}. For 
all $z \in \mathbb{C}$ with 
$\Re(z) > -\beta$, one has
\begin{align*}
	J(\alpha) \widehat{\Psi}_\alpha(z) &= - \frac{\alpha}{\gamma(\alpha)} 
	\int_0^{\sigma_\alpha}{ \int_0^\infty{  e^{-zt } \frac{d}{dx} H^x_\alpha(t) dt }~ 
	\nu^\infty_\alpha(x) dx} \\
	&= \frac{\alpha}{1 + \beta z} \int_0^1{ \frac{e^{-t^*_\alpha(x) z}}{(b(x) + \alpha)^2}  
		dx}.
\end{align*}
Indeed, using \eqref{eq:04112020}, it holds that $\frac{d}{dx} 
H^x_\alpha(t) = 0$ for $x > 
	1$. 
Finally, the change of variable
\[ x= \varphi^\alpha_u(0), \quad u \in [0, t^*_\alpha(0)),  \]
such that $t^*_\alpha(x) = t^*_\alpha(0) - u$, shows that
\[  J(\alpha) \widehat{\Psi}_\alpha(z) =  \frac{\alpha e^{-z t^*_\alpha(0)} }{ 1 + \beta z}
\int_0^{t^*_\alpha(0)}{ \frac{e^{u z}}{b(\varphi^\alpha_u(0)) + \alpha}  du}.  \]
Using \cite[Remark~35]{cormier2020meanfield}, the (local) stability of 
the invariant 
measure 
$\nu^\infty_\alpha$ is given by the
location of the
roots of the following holomorphic function, defined for all $\Re(z) > -1/\beta$:
\[   \boxed{ J(\alpha) \widehat{\Psi}_\alpha(z) - \widehat{H}_\alpha(z) =
	\frac{\alpha  e^{-z
			t^*_\alpha(0)} }{ 1 + \beta z} \int_0^{t^*_\alpha(0)}{ \frac{e^{u
				z}}{b(\varphi^\alpha_u(0)) +
			\alpha}  du} -  \frac{1-e^{-z t^*_\alpha(0)}}{z} - \frac{\beta e^{-z
			t^*_\alpha(0)}}{1 + \beta
		z}}.  \]
\subsection{A linear drift  \texorpdfstring{$b$}{b}.}
We now specify the shape of $b$. For some parameter $m > 1$, we 
choose:
\[ \forall x \geq 0,\quad b(x) = m - x, \]
It holds that $b(x) + \alpha = \sigma_\alpha -x$ with $\sigma_\alpha = m + \alpha$.
We have $\varphi^\alpha_u(0) = \sigma_\alpha(1-e^{-u})$ and so
\[ t^*_\alpha(0) = \log \left(\frac{\sigma_\alpha}{\sigma_\alpha - 1}\right). \]
Finally
\[ \int_0^{t^*_\alpha(0)}{ \frac{e^{u z}}{b(\varphi^\alpha_u(0)) + \alpha}  du} =
\frac{1}{\sigma_\alpha} \int_0^{t^*_\alpha(0)}{e^{(z + 1) u}} du = \frac{1}{
	\sigma_\alpha } \frac{e^{(z+1) t^*_\alpha(0)} - 1}{z+1},   \]
so
\[  J(\alpha) \widehat{\Psi}_\alpha(z) - \widehat{H}_\alpha(z) = \frac{\alpha}{
	\sigma_\alpha} \frac{ e^{ t^*_\alpha(0) } - e^{-z t^*_\alpha(0)} }{(1 + \beta
	z)(z+1)}
-  \frac{1-e^{-z t^*_\alpha(0)}}{z} - \frac{\beta e^{-z t^*_\alpha(0)}}{1 + \beta z}.  \]
Consequently, we have to study the complex solutions of
\begin{equation} 
	\label{eq:0511201559}
	\Re(z) > -1/\beta, \quad  \frac{\alpha}{m + \alpha -1 }
	\frac{ 1 - \left( \frac{m + \alpha}{m + \alpha - 1} \right)^{-(z +1)} }{(1 + \beta 
	z)(z+1)}  -
	\frac{1-\left( \frac{m + \alpha}{m +
			\alpha - 1} \right)^{-z}}{z} - \frac{ \beta \left( \frac{m +
			\alpha}{m + \alpha - 1} \right)^{-z}}{1+\beta z} = 0. 
\end{equation}
\begin{remark}
	In fact this analysis can be easily extended to any linear drift
	\[ b(x) = \kappa(m - x), \]
	with $\kappa, m \in  \mathbb{R}$. Indeed, adapting slightly the proof of \cite[Th.  
	21]{cormier2020meanfield} when $\kappa \leq 0$, it holds that $f + b' \geq
	0$ and
	so the unique non trivial invariant measure is locally stable: there is no
	Hopf bifurcation. If on the other hand $\kappa > 0$, by setting
	\[ \tilde{\kappa} = 1, \quad \tilde{\alpha} = \frac{\alpha}{\kappa}, \quad \tilde{m} = m
	\quad \tilde{\beta} = \kappa \beta,  \]
	we can easily reduce the problem to $\kappa = 1$.
\end{remark}
We now make the following change of variable
\[ \omega := \log \left(\frac{m + \alpha}{m + \alpha - 1} \right)  \quad \text{ and } \quad
\delta := \frac{\alpha}{m + \alpha -1},  \]
with $\omega > 0$ et $\delta \in (0, 1)$. That is, we have
\begin{equation}
	\label{eq:change of variables alpha and m}
	\alpha = \frac{\delta}{e^\omega -1 } \quad \text{ and } \quad m = 1 + \frac{1 - 
	\delta}{e^\omega -1}. 
\end{equation}
With this change of variable, \eqref{eq:0511201559} becomes
\begin{equation}
	\label{eq:explicit holomorphic function zero}
	\Re(z) > -1/\beta,\quad  \delta \frac{1}{1 + \beta z} \frac{1 - e^{-\omega(z+1)}}{1+z} 
	- 
	\frac{1-e^{-\omega z}}{z} - \frac{\beta e^{- \omega z}}{1 + \beta z} = 0. 
\end{equation}
	Recall that the strictly increasing function $\alpha \mapsto J(\alpha)$ is given by 
	\eqref{eq:formula for 
		J alpha toy}. With \eqref{eq:change of variables alpha and m}, we have
	\[ J'(\alpha) =  \beta + \omega - \delta(1-e^{-\omega}) \neq 0. \]
We deduce that  $z = 0$ is not a solution of \eqref{eq:explicit holomorphic function 
zero}. Multiplying 
by $(1 + \beta z)z$ on
both side of \eqref{eq:explicit holomorphic function zero}, we
finally find that we have to study the zeros of
\[ \Re(z) > -1/\beta,\quad  U(\beta, \delta, \omega, z) = 0, \]
with
\begin{equation}
	\label{eq:definiiton de U toy model hopf}
	\boxed{ U(\beta, \delta, \omega, z) := \delta \frac{z}{z+1} (1 - e^{-\omega(z+1)}) + 
		e^{-\omega z} 
		- (1 + \beta z). } 
\end{equation}
\subsection{On the roots of  \texorpdfstring{$U$}{U}}

\subsubsection*{An explicit parametrization of the purely imaginary roots}
We now describe all the imaginary roots of $U$. If $z = iy,~ y \geq 0$, the
equation
$U(\beta,
\delta, \omega, z) = 0$ yields
\begin{equation}
	\label{eq:toy model imaginary roots}
	\left \{
	\begin{array}{rcl}
		\cos(\omega y) + \sin(\omega y) y (1 - \delta e^{-\omega}) &=&1 - \beta y^2 \\
		-\sin(\omega y) + \cos(\omega y) y (1 - \delta e^{-\omega})&=&y(1 + \beta - 
		\delta).
	\end{array}
	\right.
\end{equation}
For $\omega > 0$ et $y \geq 0$ fixed, \eqref{eq:toy model imaginary roots}
admits a unique solution in $(\beta, \delta)$, given by
\begin{align}
	\label{eq:beta0 omega y}
	\beta^0_\omega(y) & := \frac{(1 + e^\omega)(1 - \cos(\omega y)) -(e^\omega-1)  y 
	\sin(\omega y) }{y^2 
		e^\omega - y^2 \cos(\omega y) - y \sin(\omega y)} , \\
	\delta^0_\omega(y) &:= \frac{ e^\omega(1 + y^2)(1 - \cos(\omega y)) }{y^2 
	e^\omega - y^2 \cos(\omega 
		y) - y \sin(\omega y)}. \nonumber
\end{align}

\begin{figure}[ht]
	\centering
	\subfloat[]{
		\begin{tikzpicture}[scale=0.7]
			\begin{axis}[
				xlabel={$ \beta^0_\omega $},
				ylabel={$ \delta^0_\omega $},
				title={Parametric plot in $(\beta, \delta)$},
				grid=major,
				grid style=dashed,
				tick label style={
					/pgf/number format/fixed,
					/pgf/number format/fixed zerofill,
					/pgf/number format/precision=2
				},
				xmin=-0.1,
				xmax=0.22,
				]
				\addplot[no marks, blue, very thick] table[ignore chars={(,)},col 
				sep=comma]
				{parametriccurve.dat};
				;
			\end{axis}
		\end{tikzpicture}
	}
	\qquad
	\subfloat[]{
		\begin{tikzpicture}[scale=0.7]
			\begin{axis}[
				xlabel={$ \beta $},
				ylabel={$ J $},
				title={Parametric plot in $(\beta, J)$},
				grid=major,
				grid style=dashed,
				tick label style={
					/pgf/number format/fixed,
					/pgf/number format/fixed zerofill,
					/pgf/number format/precision=2
				},
				xmin=0,
				xmax=0.1,
				]
				\addplot[color={red}, only marks, mark size=1.2pt] table[ignore 
				chars={(,)},col
				sep=comma]
				{parametric2.dat};
			\end{axis}
		\end{tikzpicture}
	}
	\caption{Description of the purely imaginary roots of $U$. (a) The parametric 
	curve  
		$(\beta^0_\omega(y),
		\delta^0_\omega(y))$, plotted
		with $\omega = 1$ and  $y \in [0, 15.5 \pi]$. Each point of the 
		curve
		corresponds to
		a purely imaginary roots of $U$. (b) Purely imaginary solutions of $U$
		plotted in the
		plane $(\beta, J)$, the value of $m$ being fixed ($m = 3/2$).}
	\label{fig:example}%
\end{figure}
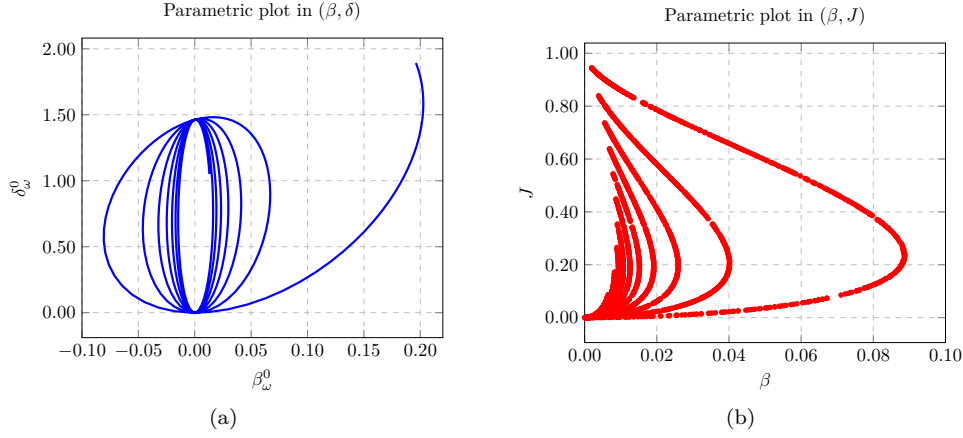
\begin{proposition}
	\label{prop:parametric curve}
	The parametric curve  $(\beta^0_\omega(y), \delta^0_\omega(y))_{y > 0}$
	admits exactly
	two multiple points given by
	\[ (0, 0) \quad \text{ and } \quad (0, \frac{2}{1 + e^{-\omega}}). \]
	Apart from those two points, the curve does not intersect itself.
\end{proposition}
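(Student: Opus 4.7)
The plan is to extract an algebraic identity from the system \eqref{eq:toy model imaginary roots} which forces injectivity of the parameterization away from $\beta=0$, and then to analyze the locus $\beta^0_\omega=0$ directly.

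First, I would check that the common denominator $D(y):=y^2 e^\omega - y^2\cos(\omega y) - y\sin(\omega y)$ is strictly positive on $(0,\infty)$: combining $|\sin(\omega y)|\le \omega y$ with $e^\omega>1+\omega$ gives $D(y)\ge y^2(e^\omega-1-\omega)>0$, so the parameterization is continuous on $(0,\infty)$. Next, setting $A:=1-\delta e^{-\omega}$ and $B:=1+\beta-\delta$, the system \eqref{eq:toy model imaginary roots} reads $\cos(\omega y)+Ay\sin(\omega y)=1-\beta y^2$ and $Ay\cos(\omega y)-\sin(\omega y)=By$. Squaring each line and adding yields $1+A^2y^2=(1-\beta y^2)^2+B^2 y^2$, which for $y>0$ simplifies to the key identity
\begin{equation*}
\beta^2 y^2 \;=\; (1-\delta e^{-\omega})^2 - (1+\beta-\delta)^2 + 2\beta.
\end{equation*}
When $\beta\neq 0$ this determines $y^2>0$ uniquely from $(\beta,\delta)$, so the parameterization is injective on $\{y>0:\beta^0_\omega(y)\neq 0\}$. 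Every self-intersection therefore lies on the axis $\beta=0$.

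To analyze this axis, I would factor the numerator via half-angle formulas as
$N_\beta(y)=2\sin(\omega y/2)\bigl[(1+e^\omega)\sin(\omega y/2)-(e^\omega-1)y\cos(\omega y/2)\bigr]$.
Its zeros split into (i) $y_k=2k\pi/\omega$ with $k\ge 1$, where $\cos(\omega y)=1$ kills $N_\delta$ and gives the point $(0,0)$; and (ii) solutions of $\tan(\omega y/2)=\tanh(\omega/2)\cdot y$. In case (ii), substituting $\cos(\omega y)=(1-t^2)/(1+t^2)$ and $\sin(\omega y)=2t/(1+t^2)$ with $t=\tanh(\omega/2)y$ into $N_\delta$ and $D$, and exploiting $(e^\omega+1)^2 t^2=(e^\omega-1)^2 y^2$, the common factor $(e^\omega-1)^2(1+y^2)$ cancels and leaves $\delta^0_\omega(y)=2e^\omega/(e^\omega+1)=2/(1+e^{-\omega})$, independently of $y$.

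Finally, I would verify each of the two candidate points is genuinely achieved more than once. For $(0,0)$ the family $\{2k\pi/\omega:k\ge 1\}$ is manifestly infinite. For $(0,2/(1+e^{-\omega}))$, the intermediate value theorem applied to $y\mapsto\tan(\omega y/2)-\tanh(\omega/2)y$ on each interval $(2k\pi/\omega,(2k+1)\pi/\omega)$ for $k\ge 1$ (negative at the left endpoint, $+\infty$ at the right) produces at least one zero per interval, and these are disjoint from family (i) since $\tan(\omega y_k/2)=0\neq\tanh(\omega/2)y_k$. The decisive ingredient is the squaring identity, which is essentially costless once $A$ and $B$ are introduced; the remaining computations are elementary trigonometry, so no serious obstacle is expected.
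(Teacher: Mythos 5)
Your proposal is correct and follows essentially the same route as the paper: the squaring identity $\beta^2 y^2 = (1-\delta e^{-\omega})^2 - (1+\beta-\delta)^2 + 2\beta$ is exactly the paper's key step, forcing all self-intersections onto $\beta=0$ and then reading off $\delta\in\{0,\,2/(1+e^{-\omega})\}$ there. Your additional half-angle factorization of the numerator and the intermediate-value argument merely make explicit (and correctly so) the paper's terser claim that both points are genuinely attained multiple times.
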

\begin{proof}
	Squaring the two equations of \eqref{eq:toy model imaginary roots} and
	summing the
	result, one gets
	\[ 1 + y^2(1 - \delta e^{-\omega})^2 =(1 - \beta y^2)^2 + y^2 (1+\beta - \delta)^2,  \]
	that is
	\begin{equation} (1 - \delta e^{-\omega})^2 = -2 \beta + \beta^2 y^2 + (1 + \beta - 
	\delta)^2.
		\label{eq:link between delta beta and y}
	\end{equation}
	Note that if $\beta \neq 0$, for fixed values of $\delta, \beta$, there is a
	unique $y$
	satisfying this equation. This proves that all the multiple points are located
	on the axis
	$\beta = 0$. When $\beta = 0$, the equation becomes
	\[(1 - \delta e^{-\omega})^2 = (1 - \delta)^2,  \]
	whose solutions are
	\[ \delta = 0 \quad \text {and } \quad \delta = \frac{2}{1 + e^{-\omega}}. \]
	Those are indeed multiple points. For $(0, 0)$ for instance, it suffices to
	consider $y =
	\frac{2 \pi k}{\omega}, k \in \mathbb{N}^*$. This ends the proof.
\end{proof}
\subsection{Construction of the bifurcation point satisfying all the spectral
	assumptions. }
Let $\omega_0 > 0$ being fixed, chosen arbitrarily. Let $y_0 := \frac{2
	\pi}{\omega_0}(1-
\frac{\epsilon_0}{\omega_0})$ with $\epsilon_0 > 0$ (small) to be chosen later.
Let $\beta_0 := \beta^0_{\omega_0}(y_0)$ and $d_0 :=
\delta^0_{\omega_0}(y_0)$.
We have
\[ \beta_0 = \epsilon_0 + \mathcal{O}(\epsilon_0^2) \quad \text{ as } \epsilon_0
\rightarrow
0.
\]
and
\[ d_0 = \frac{e^{\omega_0}}{2 (e^{\omega_0} - 1)} \left(1 + \frac{(2
	\pi)^2}{\omega^2_0}
\right) \epsilon_0^2 + \mathcal{O}(\epsilon_0^3) \quad \text{ as } 
\epsilon_0
\rightarrow
0. \]
We then have from \eqref{eq:definiiton de U toy model hopf}
\[ \frac{\partial U}{\partial z}(\beta_0, d_0, \omega_0, i y_0) = - \omega_0 -
(1 + 2 i \pi)
\epsilon_0 +  \mathcal{O}(\epsilon_0^2) \quad \text{ as } \epsilon_0
\rightarrow
0. \]
This quantity is non-null provided that $\epsilon_0$ is sufficiently small. The
implicit function theorem applies and gives the existence of a $\mathcal{C}^1$
function $(\beta, \delta, \omega) \mapsto z_0(\beta, \delta, \omega)$ 
defined in the neighborhood of $(\beta_0, d_0, \omega_0)$ such that
\[ U(\beta, \delta, \omega, z_0(\beta, \delta, \omega)) = 0, \quad \text{ with }
\quad
z_0(\beta_0, d_0, \omega_0) = i y_0. \]
Furthermore, one has
\[ \frac{\partial}{\partial \delta} z_0(\beta_0, d_0, \omega_0) = -
\frac{\frac{\partial
		U}{\partial \delta} (\beta_0, d_0, \omega_0, i y_0)}{ \frac{\partial U}{\partial
		z} (\beta_0,
	d_0, \omega_0, i y_0) } \overset{\eqref{eq:definiiton de U toy model 
	hopf}}{=} 2 \pi \frac{1 - 
	e^{-\omega_0}}{\omega_0} \frac{2
	\pi + i
	\omega_0}{(2 \pi)^2 + \omega_0^2} + \mathcal{O}(\epsilon_0) \quad \text{ as }
\epsilon_0
\rightarrow
0 \]
and
\[ \frac{\partial}{\partial \omega} z_0(\beta_0, d_0, \omega_0) = -
\frac{\frac{\partial
		U}{\partial \omega} (\beta_0, d_0, \omega_0, i y_0)}{ \frac{\partial
		U}{\partial z}
	(\beta_0, d_0, \omega_0, i y_0) } \overset{\eqref{eq:definiiton de U toy 
	model hopf}}{=} - 
\frac{2 i \pi}{\omega_0} +
\mathcal{O}(\epsilon_0)
\quad \text{ as } \epsilon_0
\rightarrow
0.\]
We finally set
\[ \alpha_0 := \frac{d_0}{e^{\omega_0} -1} , \quad m_0 := 1 + \frac{1 - 
d_0}{e^{\omega_0} -1},  \]
and
\[ \mathfrak{Z}_0(\alpha) := z_0(\beta_0, \frac{\alpha}{m_0+\alpha-1}, \log\left( 
\frac{m_0 +
	\alpha}{m_0 + \alpha -1} \right)), \]
such that
\begin{align*} \frac{d}{d \alpha} \mathfrak{Z}_0 (\alpha_0) = & 2 \pi \frac{1 - 
		e^{-\omega_0}}{\omega_0}
	\frac{2 \pi + i \omega_0}{(2 \pi)^2 + \omega_0^2} \frac{m_0 -1 }{(m_0 - 1 - 
	\alpha_0)^2} \\
	&\quad + \frac{2 i \pi}{\omega_0} \frac{1}{(m_0 -1 + \alpha_0)(m_0 + \alpha_0)} +
	\mathcal{O}(\epsilon_0)
	\quad \text{ as } \epsilon_0
	\rightarrow
	0. 
\end{align*}
The second term on the right hand side is purely imaginary. So
\[  \Re \frac{d}{d \alpha} \mathfrak{Z}_0 (\alpha_0) =  \frac{1 - 
e^{-\omega_0}}{\omega_0}
\frac{(2 \pi)^2}{(2 \pi)^2 + \omega_0^2} \frac{m_0 -1 }{(m_0 - 1 - \alpha_0)^2} +
\mathcal{O}(\epsilon_0)
\quad \text{ as } \epsilon_0
\rightarrow
0. \]
This quantity is strictly positive provided that $\epsilon_0$ is small enough.
By choosing the parameters of the model to be $\beta = \beta_0$ and $m =
m_0$, the
Assumptions~\ref{ass:i / b0 roots of the characteristic equation},
\ref{ass:nonresonance
	condition} and \ref{ass:hopf condition} are satisfied at the point $\alpha =
\alpha_0$. In particular, Assumption~\ref{ass:nonresonance
		condition} follows from Proposition~\ref{prop:parametric curve}.

\providecommand{\bysame}{\leavevmode\hbox to3em{\hrulefill}\thinspace}
\providecommand{\MR}{\relax\ifhmode\unskip\space\fi MR }
\providecommand{\MRhref}[2]{%
	\href{http://www.ams.org/mathscinet-getitem?mr=#1}{#2}
}
\providecommand{\href}[2]{#2}

\section*{Acknowledgements}
		The authors  thank the anonymous referees 
	for their comments which contributed to clarify the presentation of this work.
	
	\noindent This project/research has received funding from the European Union’s 
	Horizon 2020 Framework Programme for Research and Innovation under the 
	Specific 
	Grant Agreement No. 945539 (Human Brain Project SGA3).


\begin{thebibliography}{10}
	
	\bibitem{MR2759829}
	H. Brezis, \emph{Functional analysis, {S}obolev spaces and partial
		differential equations}, Universitext, Springer, New York, 2011. \MR{2759829}
	
	\bibitem{MR2853216}
	M.~J. C\'{a}ceres, J.~A. Carrillo and B. Perthame,
	\emph{Analysis of nonlinear noisy integrate \& fire neuron models: blow-up
		and steady states}, J. Math. Neurosci. \textbf{1} (2011), Art. 7.
		\MR{2853216}
	
	\bibitem{MR0223194}
	H. Cartan, \emph{Calcul diff\'{e}rentiel}, Hermann, Paris, 1967. \MR{0223194}
	
	\bibitem{MR2795699}
	B. Cessac, \emph{A discrete time neural network model with spiking neurons:
		{II}: {D}ynamics with noise}, J. Math. Biol. \textbf{62} (2011), no.~6,
	863--900. \MR{2795699}
	
	\bibitem{CTV}
	Q. Cormier, E. Tanr\'{e} and R. Veltz, \emph{Long time behavior
		of a mean-field model of interacting neurons}, Stochastic Process. Appl.
	\textbf{130} (2020), no.~5, 2553--2595. \MR{4080722}
	
	
	\bibitem{cormier2020meanfield}
	Q. Cormier, \emph{A mean-field model of integrate-and-fire neurons:
		non-linear stability of the stationary solutions}, (2020). arXiv 2002.08649v4

	\bibitem{de_masi_hydrodynamic_2015}
	A. De~Masi, A. Galves, E. L\"{o}cherbach and E. Presutti, 
	\emph{Hydrodynamic
		limit for interacting neurons}, J. Stat. Phys. \textbf{158} (2015), no.~4,
	866--902. \MR{3311484 }
	
	\bibitem{DIRT1}
	F. Delarue, J. Inglis, S. Rubenthaler and E. Tanr\'{e},
	\emph{Global solvability of a networked integrate-and-fire model of
		{M}c{K}ean-{V}lasov type}, Ann. Appl. Probab. \textbf{25} (2015), no.~4,
	2096--2133. \MR{3349003}
	
	\bibitem{MR3646433}
	S. Ditlevsen and E. L\"{o}cherbach, \emph{Multi-class oscillating systems
		of interacting neurons}, Stochastic Process. Appl. \textbf{127} (2017),
	no.~6, 1840--1869. \MR{3646433}
	
	\bibitem{drogoul_hopf_2017}
	A. Drogoul and R. Veltz, \emph{Hopf bifurcation in a nonlocal nonlinear
		transport equation stemming from stochastic neural dynamics}, Chaos
	\textbf{27} (2017), no.~2, 021101. \MR{3612404}
	
	\bibitem{duarte_model_2014}
	A. Duarte and G. Ost, \emph{A model for neural activity in the absence 
	of
		external stimuli}, Markov Process. Related Fields \textbf{22} (2016), no.~1,
	37--52. \MR{3523978}
	
	\bibitem{flett_1980}
	T. M. Flett, \emph{Differential analysis}, Cambridge University Press,
	Cambridge-New York, 1980, Differentiation, differential equations and
	differential inequalities. \MR{0561908}
	
	\bibitem{fournier_toy_2016}
	N. Fournier and E. L\"{o}cherbach, \emph{On a toy model of interacting
		neurons}, Ann. Inst. Henri Poincar\'{e} Probab. Stat. \textbf{52} (2016),
	no.~4, 1844--1876. \MR{3573298}
	
	\bibitem{GerstnetNeuronalDynamic}
	W. Gerstner, W.~M. Kistler, R. Naud and L. Paninski,
	\emph{Neuronal dynamics: From single neurons to networks and models of
		cognition}, Cambridge University Press, 2014.
	
	\bibitem{MR3023444}
	G. Giacomin, K. Pakdaman, X. Pellegrin and X.
	Poquet, \emph{Transitions in active rotator systems: invariant hyperbolic
		manifold approach}, SIAM J. Math. Anal. \textbf{44} (2012), no.~6,
	4165--4194. \MR{3023444}
	
	\bibitem{MR830647}
	G.~H. Golub and C.~D. Meyer, Jr., \emph{Using the {$QR$} factorization and
		group inversion to compute, differentiate, and estimate the sensitivity of
		stationary probabilities for {M}arkov chains}, SIAM J. Algebraic Discrete
	Methods \textbf{7} (1986), no.~2, 273--281. \MR{830647}
	
	\bibitem{gripenberg_volterra_1990}
	G. Gripenberg, S.-O. Londen and O. Staffans, \emph{Volterra integral 
	and
		functional equations}, Encyclopedia of Mathematics and its Applications,
	vol.~34, Cambridge University Press, Cambridge, 1990. \MR{1050319}
	
	\bibitem{MR2759609}
	M. Haragus and G. Iooss, \emph{Local bifurcations, center
		manifolds, and normal forms in infinite-dimensional dynamical systems},
	Universitext, Springer-Verlag London, Ltd., London; EDP Sciences, Les Ulis,
	2011. \MR{2759609}
	
	\bibitem{MR2859263}
	H. Kielh\"{o}fer, \emph{Bifurcation theory.}, second ed., Applied
	Mathematical Sciences, vol. 156, Springer, New York, 2012, An introduction
	with applications to partial differential equations. \MR{2859263}
	
	\bibitem{MR4061402}
	E. Lu\c{c}on and C. Poquet, \emph{Emergence of {O}scillatory
		{B}ehaviors for {E}xcitable {S}ystems with {N}oise and {M}ean-{F}ield
		{I}nteraction: {A} {S}low-{F}ast {D}ynamics {A}pproach}, Comm. Math. Phys.
	\textbf{373} (2020), no.~3, 907--969. \MR{4061402}
	
	\bibitem{luon2018periodicity}
	E. Luçon and C. Poquet, \emph{Periodicity induced by noise and 
	interaction in the kinetic
		mean-field {F}itz{H}ugh-{N}agumo model}, Ann. Appl. Probab. \textbf{31} 
		(2021), no.~2, 561--593. \MR{4254489}
	
	\bibitem{lcherbach2020metastability}
	E. Löcherbach and P. Monmarché, \emph{Metastability for systems of
		interacting neurons}, to appear in Ann. Inst. Henri Poincar\'{e} Probab. Stat. 
		(2021). arXiv 2004.13353
	
	\bibitem{PhysRevE.100.042412}
	B. Pietras, F. Devalle, A. Roxin, A. Daffertshofer and
	E. Montbri\'o, \emph{Exact firing rate model reveals the differential
		effects of chemical versus electrical synapses in spiking networks}, Phys.
	Rev. E \textbf{100} (2019), no.~4, 042412.
	
	\bibitem{plesser_noise_2000}
	H.~E. Plesser and W. Gerstner, \emph{Noise in {Integrate}-and-{Fire}
		{Neurons}: {From} {Stochastic} {Input} to {Escape} {Rates}}, Neural
	Comput. \textbf{12} (2000), no.~2, 367--384.
	
		\bibitem{MR781411}
	M.~Scheutzow, \emph{Some examples of nonlinear diffusion processes having a
		time-periodic law}, Ann. Probab. \textbf{13} (1985), no.~2, 379--384. 
	\MR{0781411}
	
	\bibitem{MR808166}
	M.~Scheutzow, \emph{Noise can create periodic behavior and stabilize
		nonlinear diffusions}, Stochastic Process. Appl. \textbf{20} (1985), no.~2,
	323--331. \MR{0808166}
	

	
	\bibitem{MR904818}
	O.~J. Staffans, \emph{Hopf bifurcation for functional and
		functional-differential equations with infinite delay}, J. Differential
	Equations \textbf{70} (1987), no.~1, 114--151. \MR{904818}
	
\end{thebibliography}
\end{document}